\documentclass[11pt,reqno]{amsart}
\usepackage{amsthm, amsfonts, amssymb, color}
 \usepackage{mathrsfs}
\usepackage{enumerate}
\usepackage{amsmath}
 \usepackage{amstext, amsxtra}
  \usepackage{txfonts}
 \usepackage[colorlinks, linkcolor=black, citecolor=blue, pagebackref, hypertexnames=false]{hyperref}
\usepackage{graphicx}
\usepackage[symbol]{footmisc}
\usepackage{cite}
 \allowdisplaybreaks
 \oddsidemargin=-.0cm
\evensidemargin=-.0cm
\textwidth=16cm
\textheight=22cm
\topmargin=0cm
\setlength\textheight{44cc} \setlength\textwidth{30cc}
\setlength\topmargin{0in} \setlength\parskip{5pt}

\widowpenalty=10000

\def\R {\mathbb{R}}

\def\Re {\mathfrak{Re\,}}

\def\d{{\rm d}}

\def\i{{\rm i}}

\def \and {{\qquad\text{and}\qquad}}

\newtheorem{theorem}{Theorem}[section]
\newtheorem{proposition}[theorem]{Proposition}

\newtheorem{lemma}[theorem]{Lemma}

\newtheorem{example}[theorem]{Example}
\newtheorem{remark}[theorem]{Remark}

\numberwithin{equation}{section}
\theoremstyle{definition}

\title
 {Observable sets, potentials and Schr\"{o}dinger equations}
\author{Shanlin Huang \quad Gengsheng Wang \quad   Ming Wang}

\address{Shanlin Huang,  School of Mathematics and Statistics, Hubei Key Laboratory of Engineering Modeling and Scientific Computing, Huazhong University of Science and Technology,  Wuhan,  430074,  P.R. China}
\email{shanlin\_huang@hust.edu.cn}
\address{Gengsheng Wang,  Center for Applied Mathematics, Tianjin University, Tianjin 300072, P.R. China}
\email{wanggs62@yeah.net}
\address{Ming Wang,  School of Mathematics and Physics, China University of Geosciences, Wuhan 430074,  P.R. China}
\email{mwang@cug.edu.cn}
%\date{\today}
\subjclass[2010]{93B07, 35J10.}
\keywords{Observable sets, observability, Schr\"{o}dinger equations, potentials.}
\thanks{Corresponding author: G. Wang, e-mail: wanggs62@yeah.net}

\begin{document}

\begin{abstract}
We characterize observable sets for 1-dim Schr\"{o}dinger equations in $\mathbb{R}$: $\i \partial_t u = (-\partial_x^2+x^{2m})u$ (with $m\in \mathbb{N}:=\{0,1,\dots\}$). More precisely, we obtain
what follows: First, when $m=0$, $E\subset\mathbb{R}$ is an observable set at some time  if and only if it is thick,  namely, there is  $\gamma>0$ and $L>0$ so that
$$
\left|E \bigcap [x, x+ L]\right|\geq \gamma L\;\;\mbox{for each}\;\;x\in \mathbb{R};
$$
 Second, when $m=1$ ($m\geq 2$ resp.), $E$ is an observable set at some time (at any time resp. )   if and only if it is weakly thick, namely
$$
\varliminf_{x \rightarrow +\infty} \frac{|E\bigcap [-x, x]|}{x} >0.
$$
From these,  we see how   potentials $x^{2m}$ affect the observability (including the geometric structures of observable sets and the minimal observable time).

Besides, we obtain several supplemental theorems for the above results, in particular,
we find
 that
 a half line  is an observable set at  time $T>0$
 for the above equation with $m=1$
 if and only if $T>\frac{\pi}{2}$.
%our result shows that, for the Hermite-Schr\"{o}dinger equation, different kinds of sets may also lead to different types of observability inequalities.
%when the observable set is changed, then different types of observability inequalities are obtained. for different kinds of sets, the types of the observability may also be very different.
\end{abstract}

\maketitle

%\section{Observable set for free Schr\"{o}dinger equations on $\R$}
\section{Introduction}\label{section1}
\noindent

Consider two $1$-dim Schr\"{o}dinger equations in $\R$. The first one is as:
\begin{equation}\label{equ-free-sch}
\i \partial_t u(t,x) = (-\partial_x^2+c) u(t,x),\;\; t\in \mathbb{R}^+:=(0,\infty),\; x\in \mathbb{R}; \quad u(0,\cdot)\in L^2(\R),
\end{equation}
where $c$ is a real number, while the second one reads as:
\begin{equation}\label{equ-1.0}
\i \partial_t u(t,x) = H u(t,x),\;\; t\in \mathbb{R}^+,\; x\in\mathbb{R}; \quad u(0,\cdot)\in L^2(\R),
\end{equation}
where
\begin{equation}\label{equ-1.0-1130}
H:=-\partial_x^2+x^{2m},\quad m\in \mathbb{N}^+:=\{1,2,\ldots\}.
\end{equation}
(Here and in what follows, $L^2(\R):=L^2(\R;\mathbb{C})$. The same is said about $L^2(\R^n)$.)
 Several notes on these two equations are given in order.
\begin{itemize}
\item When $c=0$,  \eqref{equ-free-sch} is referred to as
the free Schr\"{o}dinger equation, while when $c=1$, it is the equation \eqref{equ-1.0}
with $m=0$.

\item  The equation \eqref{equ-1.0}, with $m=1$, is known as the Hermite Schr\"{o}dinger equation. It is  the  Schr\"{o}dinger equation for the harmonic oscillator, which is a basic model in quantum mechanics, as it approximates any trapping Schr\"{o}dinger equation with a real potential at its point of equilibrium (see e.g. in \cite{FH,gri}).

\item Both  \eqref{equ-free-sch} and \eqref{equ-1.0}
 are well-posed in  $L^2(\R)$. Furthermore,
the  $L^2$-norm of  any solution $u$ to \eqref{equ-free-sch} (or \eqref{equ-1.0})
 is conserved, i.e.,
    \begin{align}\label{equ-929-2}
\int_{\mathbb{R}}|u(t,x)|^2\,\mathrm dx = \int_{\mathbb{R}}|u(0,x)|^2\,\mathrm dx\;\;
\mbox{for any}\;\; t \in \R^+.
\end{align}

    \end{itemize}

\subsection{Several definitions}\label{sev def}

Let $E\subset\mathbb{R}$ be a measurable set. Several definitions on $E$ are given as follows:

 \begin{itemize}
\item [($\textbf{D}_1$)] The set $E$  is called \emph{an observable set at any  time} for
\eqref{equ-free-sch} (or \eqref{equ-1.0}), if for any $T>0$, there is  a constant $C_{obs}=C_{obs}(T,E)>0$ so that
\begin{align}\label{ob-q}
\int_{\mathbb{R}}|u(0,x)|^2\,\mathrm dx\leq C_{obs}\int_0^T\int_E|u(t,x)|^2\,\mathrm dx\, \mathrm dt,\;\mbox{when}\;u\;\mbox{solves}\; \eqref{equ-free-sch}\; (\mbox{or}\,  \eqref{equ-1.0}).
\end{align}
(Here and in what follows, $C(\cdots)$ stands for a positive constant depending on what are enclosed in the brackets.)

\item [($\textbf{D}_2$)] The set $E$ is called \emph{an observable set   at some  time} for  \eqref{equ-free-sch} (or \eqref{equ-1.0}), if there is $T>0$ and $C_{obs}=C_{obs}(T,E)>0$ so that \eqref{ob-q} holds. Similarly, the set $E$ is called \emph{an observable set   at   time $T>0$} for  \eqref{equ-free-sch} (or \eqref{equ-1.0}), if there is  $C_{obs}=C_{obs}(T,E)>0$ so that \eqref{ob-q} holds.

 \item [($\textbf{D}_3$)]   The set $E$ is said to be \emph{thick}, if there is  $\gamma>0$ and $L>0$ so that
\begin{align}\label{equ-set}
\left|E \bigcap [x, x+ L]\right|\geq \gamma L\;\;\mbox{for each}\;\;x\in \mathbb{R}.
\end{align}
(Here and in what follows, when $E$ is a measurable set in $\mathbb{R}$ (or in $\mathbb{R}^n$), $|E|$ stands for the Lebesgue measure of
$E$ in $\mathbb{R}$ (or in $\mathbb{R}^n$).)

\item [($\textbf{D}_4$)]  The set $E$ is said to be \emph{weakly thick},
if
\begin{align}\label{def-1-3}
\varliminf_{x \rightarrow +\infty} \frac{|E\bigcap [-x, x]|}{x} >0.
\end{align}

\end{itemize}
Several notes on these definitions are given in order.
\begin{itemize}
\item The above definitions can be extended to the $n$-dim case similarly.
\item The inequality \eqref{ob-q} is the standard observability inequality for \eqref{equ-free-sch} (or \eqref{equ-1.0}). Thus,  $E\subset\mathbb{R}$ is \emph{ an observable set   at some time} for  \eqref{equ-free-sch} (or \eqref{equ-1.0}) if and only if \eqref{equ-free-sch} (or \eqref{equ-1.0}), with controls restricted in $E$, is exactly controllable over $(0,T)$ for some $T>0$, while $E\subset\mathbb{R}$ is  \emph{an observable set   at any  time} for  \eqref{equ-free-sch} (or \eqref{equ-1.0}) if and only if \eqref{equ-free-sch} (or \eqref{equ-1.0}), with controls restricted in $E$, is exactly controllable over $(0,T)$ for any $T>0$.

    \item To our best knowledge, the concept of \emph{thick sets}  arose from studies of the uncertainty principle (see, for instance, \cite[p. 5]{BD}, or \cite[p. 113]{HJ}),
 while  the concept of \emph{weakly thick sets} seems to be insufficiently explored. We notice that in the recent work \cite{BJ}, the authors have used concepts \eqref{equ-set} and \eqref{def-1-3} (in $\mathbb{R}^n$) to study spectral inequalities for Hermite functions.
        We also mention that if $\varliminf_{}$ is replaced by $\varlimsup_{}$ in the above \eqref{def-1-3}, then some interesting geometric properties of such sets (in $\mathbb{R}^n$) can be found in \cite{B86}.
        The relationship between \emph{thick sets} and \emph{weakly thick sets} is established in Proposition \ref{prop-916} and Proposition \ref {prop-916-1} of this paper, which tells us that   if $E$ is \emph{thick}, then it is \emph{weakly thick}, but a  weakly thick  set may not  be a thick set.
        %Proposition \ref{prop-916} of this paper tells us that if $E$ is \emph{thick}, then it is \emph{weakly thick}. While Proposition \ref {prop-916-1} of this paper says that a  weakly thick  set may not  be a thick set.
                 We can explain the difference between \emph{thick sets} and \emph{weakly thick sets} in the manner:  \emph{A thick set} is ``thick" in any interval with a fixed length,
    while   \emph{a weakly thick set} is ``thick" in   the  interval $[-x, x]$ with $x$ sufficiently large.

    \end{itemize}

    \subsection{Aim and motivation}\label{aim and moti}
    The aim of this  paper is to  present characterizations of \emph{observable sets}
     (at some time or at any time) for equations \eqref{equ-free-sch} and \eqref{equ-1.0}.
      From these, we can see how potentials  affect  the observability (including the geometric structures of observable sets and the minimal observable time).
    Our studies were partially motivated by the existing fact:  different  potentials may cause different geometric structures of observable sets for heat equations in $\mathbb{R}^n$,
    with $n\in \mathbb{N}^+$. More precisely, consider
                  the
    following  heat equations
    in $\mathbb{R}^n$:
    \begin{align}\label{pureheatequ}
\partial_t u(t,x) -\Delta u(t,x)=0,\;\;(t,x)\in \mathbb{R}^+\times\mathbb{R}^n,  \quad u(0, x)\in L^2(\R^n)
 \end{align}
  and
    \begin{align}\label{equ-102-1}
    \partial_t u(t,x)-\Delta u(t,x)+|x|^{2m}u(t,x)=0, \;\;(t,x)\in \mathbb{R}^+\times\mathbb{R}^n,\quad u(0, x)\in L^2(\R^n),
    \end{align}
    where $m\in \mathbb{N}^+$.
     First, it was shown independently in \cite{EV18, WWZZ} that $E$ is  \emph{an observable set
     at any time} for the heat equation \eqref{pureheatequ} if and only if $E$ is thick in $\mathbb{R}^n$, i.e., for some $\gamma>0$
 and $L>0$,
\begin{align}\label{equ-set-n}
\left|E \bigcap Q_L(x)\right|\geq \gamma L^n\;\;\mbox{for each}\;\;x\in \mathbb{R}^n.
\end{align}
(Here $Q_L(x)$ is the closed cube in $\mathbb{R}^n$, centered at $x$ and of the length $L$.)
Second, it was obtained in  \cite{M09, DM} that when $m\geq 2$,
the cone
$E=\{x\in \mathbb{R}^{n}: |x|\geq r_0, x/|x|\in \Theta_0\}$ (where $r_0> 0$ and  $\Theta_0$
 is a nonempty and open subset of $\mathbb{S}^{n-1}$) is an observable set for \eqref{equ-102-1},
 while when $m=1$, the above cone is no longer  an observable set for \eqref{equ-102-1}.
 Third, it was proved in
  \cite{BJ} that if $E$ is thick, then $E$ is an observable set for \eqref{equ-102-1} with $m=1$.

\subsection{Main results}\label{main results,1.3}
The first main result of this paper is as follows:
\begin{theorem}\label{thm-ob-for-free}
Let $E\subset \R$ be a measurable set. Then the following  statements are equivalent:

\noindent $(i)$  The set $E$ is  thick.

\noindent $(ii)$  The set $E$ is  an observable set  at some time  for the equation \eqref{equ-free-sch}  with $c\in \R$.

\noindent $(iii)$  The set $E$ is  an observable set  at some time for the equation \eqref{equ-free-sch} with $c=0$.
\end{theorem}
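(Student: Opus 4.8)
The plan is to prove the cycle of implications $(i)\Rightarrow(ii)$, $(ii)\Rightarrow(iii)$, and $(iii)\Rightarrow(i)$. The middle implication $(ii)\Rightarrow(iii)$ is essentially trivial: a change of variables $u(t,x)\mapsto \e^{-\i c t}u(t,x)$ turns a solution of \eqref{equ-free-sch} with potential $c$ into a solution of \eqref{equ-free-sch} with $c=0$, and this transformation does not change $|u(t,x)|^2$, so the observability inequality \eqref{ob-q} is preserved verbatim (same $T$, same $C_{obs}$, same $E$). The same remark handles the reverse direction, so in fact $(ii)$ and $(iii)$ are interchangeable and one only needs to establish the equivalence of thickness with observability for the free Schrödinger equation $\i\pt u=-\partial_x^2 u$.

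For $(iii)\Rightarrow(i)$ I would argue by contraposition using a concentration/coherent-state argument. Suppose $E$ is not thick. Then for every $L>0$ and every $\gamma>0$ there is an interval $I$ of length $L$ with $|E\cap I|<\gamma L$; by a standard iteration one can produce intervals $I_k$ of lengths $L_k\to\infty$ with $|E\cap I_k|/L_k\to 0$. The idea is to test the observability inequality against rescaled Gaussian wave packets centered at (the center of) $I_k$: a data $u_0$ whose modulus is a bump of width $\sim L_k$ supported essentially in $I_k$. Since the free Schrödinger flow spreads a packet of spatial width $\ell$ only on the time scale $\sim\ell^2$, for $t\in[0,T]$ with $T$ fixed and $L_k$ large the solution $u(t,\cdot)$ stays concentrated near $I_k$ up to a negligible $L^2$-tail; hence $\int_0^T\!\int_E|u(t,x)|^2\,\d x\,\d t$ is controlled by $T\cdot|E\cap I_k|\cdot\|u\|_{L^\infty}^2$ plus the tail, which tends to $0$ relative to $\|u_0\|_{L^2}^2$ as $k\to\infty$, contradicting \eqref{ob-q}. (Alternatively one may invoke the known equivalence, e.g. from Egidi–Veselić / Wang–Wang–Zhang type results, that observability of the free Schrödinger equation on $E$ is equivalent to thickness; but I prefer to keep the argument self-contained via wave packets.)

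For $(i)\Rightarrow(iii)$, the workhorse is a Logvinenko–Sereda / uncertainty-principle estimate for functions with compactly Fourier support, together with the spectral decomposition of the free flow. Writing $\widehat{u_0}$ for the Fourier transform of the initial datum, one has $u(t,x)=\tfrac1{\sqrt{2\pi}}\int \e^{\i x\xi-\i t\xi^2}\widehat{u_0}(\xi)\,\d\xi$. Decompose $u_0$ dyadically in frequency, $u_0=\sum_j P_j u_0$ where $P_j$ localizes to $|\xi|\sim 2^j$. For the thick set $E$, the Logvinenko–Sereda theorem gives, for each frequency-localized piece, a bound $\|P_j u_0\|_{L^2(\R)}^2\le C(\gamma,L)\,2^{?}\|P_j u_0\|_{L^2(E)}^2$ — more precisely one uses the sharp form (Kovrijkine's constants) so that the constant grows at most polynomially/subexponentially in the band size, which is harmless after one integrates in time and exploits the dispersive decoupling of distinct frequency blocks. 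The key point, and the main obstacle, is to pass from an instantaneous-in-$x$ spectral inequality to the space-time integral $\int_0^T\!\int_E|u|^2$: one inserts the time integration and uses that the phases $\e^{-\i t\xi^2}$ cause different dyadic blocks to be ``almost orthogonal'' in $L^2((0,T)\times E)$ on average, so that $\int_0^T\!\int_E|u|^2\gtrsim \sum_j \int_0^T\!\int_E|P_j u|^2$; combined with the per-block spectral inequality this yields $\int_0^T\!\int_E|u|^2\gtrsim c(T,E)\sum_j\|P_j u_0\|_{L^2}^2 = c(T,E)\|u_0\|_{L^2}^2$ by Plancherel and conservation of the $L^2$-norm \eqref{equ-929-2}. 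Carrying out this almost-orthogonality bookkeeping uniformly in the (infinitely many) dyadic blocks, with constants controlled by $\gamma$, $L$ and $T$ alone, is the delicate part; everything else is soft.
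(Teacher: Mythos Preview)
Your treatment of $(ii)\Leftrightarrow(iii)$ via the gauge $u\mapsto e^{-\i ct}u$ is fine, and your contraposition argument for $(iii)\Rightarrow(i)$ is correct in spirit and close to what the paper does: the paper tests \eqref{ob-q} against the explicit Gaussian evolution $v(t,x)=(4\pi(\i t+1))^{-1/2}e^{-|x-x_0|^2/4(\i t+1)}$ and reads off the thickness bound directly from the resulting inequality, rather than arguing by contradiction with growing intervals. Both work.

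The real problem is your $(i)\Rightarrow(iii)$. The scheme ``dyadic frequency decomposition $+$ Logvinenko--Sereda on each block $+$ almost orthogonality in $L^2((0,T)\times E)$'' has two defects, either of which is fatal. First, the Logvinenko--Sereda constant for a thick set applied to a function with Fourier support in an interval of width $R$ is, even in Kovrijkine's sharp form, of order $(\gamma/C)^{CLR}$, i.e.\ \emph{exponential} in $R$, not polynomial or subexponential as you assert. For the block $P_j$ with bandwidth $\sim 2^j$ this gives a lower bound $\int_E|P_ju(t,\cdot)|^2\gtrsim (\gamma/C)^{C L 2^j}\|P_ju_0\|^2$, and integrating over $t\in(0,T)$ only multiplies by $T$; the constant still collapses as $j\to\infty$. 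Second, the claimed almost orthogonality $\int_0^T\!\int_E|u|^2\gtrsim\sum_j\int_0^T\!\int_E|P_ju|^2$ is simply not available: restriction to $E$ destroys the Littlewood--Paley orthogonality, and the time oscillation $e^{-\i t\xi^2}$ gives only $O(|\xi_1^2-\xi_2^2|^{-1})$ decay between frequencies, which does not sum to a uniform constant across all blocks and all frequencies within blocks. You flag this step as ``delicate'', but in fact it does not go through.

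The paper bypasses both issues by working at the resolvent level. It invokes the equivalence in Proposition~\ref{prop-re-to-ob} (the Miller/Burq--Zworski resolvent criterion): observability at some time is equivalent to the uniform estimate $\|u\|_{L^2(\R)}^2\le M\|(-\partial_x^2-\lambda)u\|_{L^2(\R)}^2+m\|u\|_{L^2(E)}^2$ for all $\lambda\in\R$. For thick $E$ and $\lambda\ge 0$ this estimate is taken from \cite[Proposition~1]{Gre}; the case $\lambda<0$ reduces to $\lambda=0$ by monotonicity. The point is that the resolvent formulation already encodes the correct frequency localization (near the energy surface $\xi^2=\lambda$, a window of \emph{bounded} width after rescaling), so the Logvinenko--Sereda input enters with a \emph{uniform} constant rather than one blowing up with the dyadic index. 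If you want to salvage a direct argument, this is the mechanism you need to capture.
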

Some remarks are given in order.

\begin{itemize}
\item [($\textbf{a}_1$)]   Theorem \ref{thm-ob-for-free} characterizes  the \emph{observable set at some time} for the Schr\"{o}dinger equation \eqref{equ-free-sch}. This characterization seems new for us, though some sufficient conditions on  \emph{observable sets} for Schr\"{o}dinger equations in $\R^n$ were built up in  \cite{WWZ,zhang09}. We would like to mention some sufficient conditions mentioned above: According to Remark (a6) in \cite{WWZ},  $E$ is an \emph{observable set  at any time} for the free Schr\"{o}dinger equation,  if $E$ contains  $B^c(0,r)$ for some $r>0$. (Here and in what follows, $B(x_0,r)$ denotes the closed ball in
    $\mathbb{R}^n$, centered at $x_0\in\mathbb{R}^n$ and of radius $r>0$, while $B^c(x_0,r)$ denotes its complementary set.)
    The same conclusion was derived in \cite{zhang09}  for the Schr\"{o}dinger equation with Schwartz class potentials.

\item [($\textbf{a}_2$)]  For the Schr\"{o}dinger equation on compact Riemannian manifolds, the  observability has been extensively studied: It was shown in  \cite{Le92} (see also \cite{KDPhung})
           that any open set with geometric control
condition (GCC) is \emph{an observable set at any time}.
    It was further proved in \cite{Ma11} that  the GCC  is also necessary in the manifolds with periodic geodesic flows (or in Zoll manifolds). It was obtained that on the flat torus $\mathbb{T}^n := (\R/2\pi\mathbb{Z})^n$, every non-empty open set $E\subset \mathbb{T}^n$ is \emph{an observable set at any time} (see, for instance,  \cite{Ha,J90,Kom} for the free Schr\"{o}dinger equation and \cite{AM,BZ12,Bour13} for the Schr\"{o}dinger equations with potentials). It was verified in \cite{BZ17} that  on $\mathbb{T}^2$, each measurable set $E$, with a positive measure, is \emph{an observable set at any time}.
    For the observability of Schr\"{o}dinger equations on negatively curved manifolds, we refer the readers to  \cite{AR,Dya,Jin} and the references therein.

\end{itemize}

The second main result of this paper is as follows:
\begin{theorem}\label{thm-ob-for-HO}
Let $E\subset \R$ be a measurable set. Then the following  statements are equivalent:

\noindent $(i)$ The set $E$ is  weakly thick.

\noindent $(ii)$  The set  $E$ is  an observable set at some time  for the equation  \eqref{equ-1.0} with  $m=1$.

\noindent $(iii)$  The set $E$ is  an observable set at any time  for the equation  \eqref{equ-1.0} with  $m\ge 2$.
\end{theorem}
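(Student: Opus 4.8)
The plan is to reduce everything to the Hermite setting and exploit the fact that the propagator $e^{-\i t H}$ for $H = -\partial_x^2 + x^{2m}$ has a known periodicity (when $m=1$, $e^{-\i\pi H/2}$ is, up to a unitary phase, the parity operator — this is the classical Mehler formula), and more generally good dispersive/smoothing structure for $m\ge 2$. I would organize the equivalences as $(i)\Rightarrow(iii)\Rightarrow(ii)\Rightarrow(i)$, so that the strongest observability statement is proved from the geometric hypothesis and the weakest is used to recover it.

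\emph{Step 1: $(i)\Rightarrow$ observability, $m\ge 2$, at any time.} Assume $E$ is weakly thick, so there is $\theta>0$ and $x_0>0$ with $|E\cap[-x,x]|\ge \theta x$ for all $x\ge x_0$. The key input is a quantitative version of the statement (already used in the heat-equation results cited in \S\ref{aim and moti}, e.g.\ \cite{M09,DM}) that for $m\ge 2$ the classical trajectories of the Hamiltonian $\xi^2 + x^{2m}$ escape to spatial infinity: an $L^2$ function with energy $\le\lambda$ (i.e.\ a spectral projection of $H$) is, after time evolution, essentially concentrated at $|x|\gtrsim \lambda^{1/(2m)}$, on a scale where a weakly thick set sees a fixed proportion of the mass. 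Concretely I would (a) split a general solution into a low-energy part $\one_{H\le\lambda}u$ and a high-energy tail; (b) for the low-energy part, use a semiclassical/Egorov argument together with the escape property to show that averaging $\int_0^T\int_E|u|^2$ over a short time captures a definite fraction of $\|\one_{H\le\lambda}u_0\|^2$; (c) absorb the high-energy tail by choosing $\lambda$ large, using conservation of energy and the fact that the complement of $E$ in $[-R,R]$ has density at most $1-\theta$. This yields \eqref{ob-q} for every $T>0$.

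\emph{Step 2: observability $\Rightarrow(i)$ (necessity of weak thickness), covering both $m=1$ and $m\ge 2$.} Here I would argue by contradiction: if $E$ is \emph{not} weakly thick, build a sequence of normalized initial data (finite linear combinations of Hermite-type eigenfunctions $\psi_n$, or coherent states of energy $\lambda_n\to\infty$) for which $\int_0^T\int_E|u_n(t,x)|^2\,\d x\,\d t\to 0$. The mechanism is again classical–quantum correspondence: a coherent state of energy $\lambda$ stays, along its trajectory, within $|x|\lesssim \lambda^{1/(2m)}$ (for $m=1$ it oscillates in $[-c\sqrt\lambda,c\sqrt\lambda]$), so one needs only that $E\cap[-R,R]$ has small relative measure along a subsequence $R_n\to\infty$ to make the observation term negligible; failure of \eqref{def-1-3} provides exactly such $R_n$. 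One must be slightly careful to also kill the evanescent tails of the eigenfunctions outside the classically allowed region, which is standard Agmon/WKB decay.

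\emph{Step 3: $(i)\Rightarrow$ observability, $m=1$, at some time — and the role of $T$.} For the harmonic oscillator the propagator is $\pi$-periodic up to phase, and $e^{-\i \pi H/2}f(x)=e^{-\i\pi/4}f(-x)$ (Mehler). I would use this to reduce observability over a suitable time window to a \emph{stationary} statement about the family $\{f, \widehat f\}$ (via the fractional Fourier transform that $e^{-\i tH}$ realizes): namely a spectral/uncertainty inequality saying that $\|f\|_{L^2}^2 \lesssim \int_0^T \|\,\one_E\, e^{-\i t H} f\|^2\,\d t$ provided $E$ is weakly thick — this is precisely the kind of inequality for Hermite functions established in \cite{BJ}, and the supplemental result quoted in the abstract (a half-line works iff $T>\pi/2$) indicates the critical window is governed by how long it takes the parity flip to expose the ``other end'' of a weakly thick set. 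Combining Steps 2 and 3 gives the equivalence $(i)\Leftrightarrow(ii)$.

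The main obstacle I anticipate is Step 1(b)–(c): turning the classical escape of trajectories for $\xi^2+x^{2m}$, $m\ge2$, into a uniform (over all initial data, all $T>0$) lower bound requires a careful semiclassical propagation-of-singularities argument with explicit control of the energy cutoff versus the spatial scale $\lambda^{1/(2m)}$ on which weak thickness becomes effective; the borderline case $m=1$ genuinely fails ``at any time'' (the trajectories do not escape), which is exactly why $m=1$ only gives observability at \emph{some} time and must instead be handled by the Mehler/fractional-Fourier route in Step 3.
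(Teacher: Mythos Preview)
There is a genuine gap in Step~1. The claim that ``the classical trajectories of the Hamiltonian $\xi^2 + x^{2m}$ escape to spatial infinity'' is false: for every $m\ge 1$ the potential $x^{2m}$ is confining, so all classical trajectories are periodic and remain in the classically allowed region $|x|\le \lambda^{1/(2m)}$. (The citations \cite{M09,DM} for the heat equation concern spectral inequalities for eigenfunctions, not escape.) Consequently the Egorov/propagation mechanism you invoke simply does not exist here, and the splitting in (a)--(c) cannot produce the claimed lower bound. What actually separates $m\ge 2$ from $m=1$ is purely spectral: for $m\ge 2$ the eigenvalue gaps satisfy $\lambda_{k+1}-\lambda_k\to\infty$ (equivalently, the classical period $T(E)\sim E^{(1-m)/(2m)}\to 0$), whereas for $m=1$ the gap is identically $2$.

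The paper's route is accordingly spectral throughout. By Proposition~\ref{pro-ob-general} (a consequence of the Ramdani--Takahashi--Tenenbaum--Tucsnak resolvent criterion), observability at some time is \emph{equivalent} to the uniform eigenfunction bound $\inf_k \int_E |\varphi_k|^2 > 0$; the diverging gap for $m\ge 2$ then upgrades ``some time'' to ``any time'' via \cite[Corollary~6.9.6]{TW}. The eigenfunction lower bound itself is obtained by WKB asymptotics (Lemma~\ref{lem3.1}, Lemma~\ref{lem-low-h}) for the one-sided condition~\eqref{equ-930-4}, and the passage from the two-sided weak-thickness condition~\eqref{def-1-3} to \eqref{equ-930-4} hinges on a simple but essential observation you do not mention: every eigenfunction of $-\partial_x^2+x^{2m}$ is either even or odd, so $\int_E|\varphi_k|^2 \ge \int_{E_+\cup(-E_-)}|\varphi_k|^2$. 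Your Step~2 is closer in spirit to the paper, which takes $u_0=\varphi_k$ and uses the sharp WKB expansion of Lemma~\ref{lem-append} to force weak thickness; your Step~3 (Mehler/fractional Fourier) is what the paper uses for the $n$-dimensional supplemental results in Section~\ref{two-point-hermite}, not for Theorem~\ref{thm-ob-for-HO}, which is handled uniformly in $m$ by the spectral route above.
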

Several notes are given in order.
\begin{itemize}
\item [($\textbf{b}_1$)] Theorem \ref{thm-ob-for-HO}  characterizes \emph{observable sets} for the equation  \eqref{equ-1.0}.
    In this direction, we would like to mention \cite{DM} which shows that
    a half line: $(-\infty,\, x_0)$ (or $(x_0,\, \infty)$), with $x_0\in \R$, is  \emph{an observable set at some time} for the equation \eqref{equ-1.0} with $m=1$ (see \cite[Proposition 3]{DM}). Compared to this, our  Theorem \ref{thm-ob-for-HO} shows  that
    the   much smaller set:
      $$
      E = \bigcup_{k=1}^\infty [kx_0, (k+{1}/{2})x_0]\;\;\mbox{with}\;\; x_0>0
      $$
    is also  \emph{an observable set at some time} for the equation \eqref{equ-1.0} with $m=1$.
    With regard to \emph{observable sets} for  the equation \eqref{equ-1.0}, where $m\geq 2$,
    we  do not find any result   in the existing literatures.

 \item [($\textbf{b}_2$)] Since \emph{a weakly thick set} may not be \emph{a thick set} (see Subsection \ref{sev def}), it follows from Theorem \ref{thm-ob-for-free} and Theorem \ref{thm-ob-for-HO} that the characterizations of \emph{observable sets}  for the free Schr\"{o}dinger equation and the equation \eqref{equ-1.0} are quite different.
          The reason  is as follows: Differing from  the operator $(-\partial_x^2+c)$ (which has  the continuous spectrum), the operator $H=-\partial_x^2+x^{2m}$  has purely discrete spectrum consisting of all simple and real eigenvalues $\{\lambda_k\}_{k=1}^\infty$, with a gap condition  (see \eqref{equ-929-11}).
     This gap condition ensures that $E$ is  \emph{an observable set at some time} if and only if $\|\varphi_k\|_{L^2(E)}$ has a uniform positive lower bound for all $k\in \mathbb{N}^+$,
     where $\varphi_k$ is the $L^2$ normalized eigenfunction corresponding to $\lambda_k$ (see Proposition \ref{pro-ob-general}).  Furthermore,   we observed that each eigenfunction $\varphi_k$ is either even or odd (see {\bf Key Observation} in Subsection \ref{subsect4.2}). This fact suggests that  \emph{the observable set}  can be chosen only on a half line, which is not a thick set clearly.

      \item [($\textbf{b}_3$)]  Theorem \ref{thm-ob-for-HO} shows the difference between
     the equation \eqref{equ-1.0} with $m=1$ and  $m\geq 2$ respectively, from the perspective of the observability. It is natural to ask if  one can replace
     \emph{at some time} by  \emph{at any time} in the statement  \noindent $(ii)$ of Theorem \ref{thm-ob-for-HO}. The answer is negative. In fact, on one hand, the half line
      $(a, \infty)$ (with $a\in\mathbb{R}$) is clearly \emph{a weakly thick set} (see Example \ref{exp-1}), while on the other hand,  $(a, \infty)$ is  \emph{an observable set at time $T$} if and only if $T>\frac{\pi}{2}$ (see Theorem \ref{thm-1126-1}).
       Hence, the above difference is essential.
       The reason behind this phenomenon is closely related to the different asymptotic behaviours  of the associated spectral distributions: in the case that  $m\ge 2$,  we have (see \eqref{equ4.34} in Section \ref{sec4})
$$
 \lambda_{k+1}-\lambda_k\rightarrow \infty, \quad \mbox{as} \quad k\rightarrow \infty,
$$
while in the case when $m=1$, we have (see  \eqref{equ1.3}  in Section \ref{two-point-hermite})
 $$
 \lambda_{k+1}-\lambda_k\equiv 2\;\;\mbox{for all}\;\;k\in \mathbb{N}^+.
 $$

 \item [($\textbf{b}_4$)] Our strategy to prove Theorem \ref{thm-ob-for-HO} is as follows:
  First, we  treat a class of more general  potentials which are real-valued, have the $C^3$-regularity and   grow at infinity like $|x|^{2c}$, with $c\geq 1$ a real number (see the \textbf{Condition (H)} in Section \ref{sec3} for details). For such a potential, we prove in Theorem \ref{thm-ob-for-genral} that $E$ is  \emph{an observable set at some time}, if
\begin{align}\label{equ-2-25-1}
\varliminf_{x \rightarrow +\infty} \frac{|E\bigcap [0, x]|}{x} >0.
\end{align}
Second,  we observe that when $V(x)=x^{2m}$ (with $m\in \mathbb{N}^+$),  each  eigenfunction $\varphi_k$ of $H$ is either even or odd. With the aid of this observation, we  prove
 that  \emph{a weakly thick set}  is \emph{an observable set} for \eqref{equ-1.0}. Third, with the help of the asymptotic formulas for eigenvalues (see \eqref{equ4.34}) and the asymptotic expressions of eigenfunctions (see Lemma \ref{lem-append}), we prove  that \emph{an observable set} for \eqref{equ-1.0} is \emph{weakly thick}.
\end{itemize}

\subsection{Supplemental results}\label{subsec suplement}
This subsection gives three theorems on  the observability for the Hermite Schr\"{o}dinger equation in
$\mathbb{R}^n$  (with $n\in \mathbb{N}^+$):
\begin{align}\label{equ1.1}
\i\partial_tu(t, x)=Hu(t, x),\;\; (t,x)\in  \mathbb{R}^+\times \mathbb{R}^n;\,\,\,\,\,u(0,\,\cdot)=u_0(\cdot)\in L^2(\mathbb{R}^n),
\end{align}
where $H$ is the Hermite operator:
\begin{align}\label{equ1.2}
H=-\Delta+|x|^2.
\end{align}
(Here and in what follows, $|x|$ stands for the Euclidean norm of $x\in \mathbb{R}^n$.
We write $e^{-\i tH}$ for the unitary group generated by $-\i H$.)
These theorems can be   viewed  as supplemental results  of Theorem \ref{thm-ob-for-HO}.

\begin{theorem}\label{thm-1126-2}
Given $x_0\in \mathbb{R}^n$ and $r>0$,  the exterior domain $B^c(x_0,\, r)$ is an observable set at any time for  \eqref{equ1.1}. Furthermore,  for any $T>0$, there is $C=C(n)>0$ so that
\begin{align}\label{equ-913-0}
\int_{\mathbb{R}^n}{|u_0(x)|^2\,\mathrm dx}\leq C\Big(1+\frac{1}{T}\Big)e^{Cr^2(1+\frac{1}{T})}\int_0^{T} \int_{B^c(x_0,\, r)}{|e^{-\i tH}u_0|^2\,\mathrm dx} \mathrm dt,\;\forall\, u_0\in L^2(\mathbb{R}^n).
\end{align}
\end{theorem}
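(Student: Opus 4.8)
The plan is to prove the observability inequality \eqref{equ-913-0} directly, bypassing the spectral criterion, by combining the Mehler (metaplectic) representation of $e^{-\i tH}$ with a quantitative uncertainty principle and then averaging in time. Recall the Mehler formula: for $0<s<\pi/2$ and $f\in L^2(\mathbb{R}^n)$ one has $|(e^{-\i sH}f)(x)|=|\sin 2s|^{-n/2}\,|\widehat{G_s}(x/\sin 2s)|$, where $G_s(y):=e^{\i|y|^2\cot(2s)/2}f(y)$ and $\widehat{\,\cdot\,}$ denotes the unitary Fourier transform. Since $|G_s|=|f|$, a change of variables gives, for every measurable $\Omega\subset\mathbb{R}^n$,
\[
\|e^{-\i sH}f\|_{L^2(\Omega)}^2=\|\widehat{G_s}\|_{L^2(\Omega_s)}^2,\qquad \Omega_s:=\{z\in\mathbb{R}^n:\ z\sin 2s\in\Omega\};
\]
in particular $\|e^{-\i sH}f\|_{L^2(B^c(x_0,r))}^2=\|\widehat{G_s}\|_{L^2(B^c(x_0/\sin 2s,\,r/\sin 2s))}^2$.

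The second ingredient is a quantitative uncertainty principle for balls: there exist $c_n,C_n>0$ such that, for every $h\in L^2(\mathbb{R}^n)$, all centers $a,b\in\mathbb{R}^n$ and all radii $\rho_1,\rho_2>0$,
\[
\|h\|_{L^2(B^c(a,\rho_1))}^2+\|\widehat{h}\|_{L^2(B^c(b,\rho_2))}^2\ \ge\ c_n\,e^{-C_n\rho_1\rho_2}\,\|h\|_{L^2(\mathbb{R}^n)}^2 .
\]
Uniformity in the centers $a,b$ comes from a modulation--translation reduction to $a=b=0$, and the exponent $\rho_1\rho_2$ (the product of the two radii, i.e.\ the Slepian space--frequency concentration parameter of the pair of balls) is what will eventually produce a factor $e^{Cr^2(\cdots)}$ rather than a worse one. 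At this point one either invokes the literature or supplies the short argument bounding below the spectral gap of the associated space--frequency localization operator on the ball.

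Now combine the two facts along the evolution. Fix $u_0\in L^2(\mathbb{R}^n)$, set $u(t)=e^{-\i tH}u_0$, and take $0<t<t'$ with $s:=t'-t\in(0,\pi/2)$. Applying the Mehler identity to $f=u(t)$ (so that $u(t')=e^{-\i sH}u(t)$) and then the uncertainty principle to $h=G_s$, using $|G_s|=|u(t)|$ and $\|G_s\|_{L^2}=\|u_0\|_{L^2}$, gives the bound, \emph{uniform in $x_0$},
\[
\|u(t)\|_{L^2(B^c(x_0,r))}^2+\|u(t')\|_{L^2(B^c(x_0,r))}^2\ \ge\ c_n\,e^{-C_n r^2/\sin 2(t'-t)}\,\|u_0\|_{L^2(\mathbb{R}^n)}^2 .
\]
To pass from this pointwise-in-time bound to the time-integrated inequality \eqref{equ-913-0}, integrate it over the diagonal strip $S_T=\{(t,t'):0<t<t'<T,\ T/4<t'-t<T/2\}$, first for $0<T\le\pi/2$: the left-hand side is then at most $\tfrac{T}{2}\int_0^T\|u(t)\|_{L^2(B^c(x_0,r))}^2\,\mathrm{d}t$, while $|S_T|=T^2/8$ and $\sin 2(t'-t)\ge\sin(T/2)\ge T/\pi$ on $S_T$. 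This yields $\|u_0\|_{L^2}^2\le(C_n/T)\,e^{C_n r^2/T}\int_0^T\!\!\int_{B^c(x_0,r)}|e^{-\i tH}u_0|^2\,\mathrm{d}x\,\mathrm{d}t$ for $0<T\le\pi/2$; for $T>\pi/2$ one simply restricts the time integral to $(0,\pi/2)$ and obtains a constant of size $e^{C_n r^2}$. Taken together these give a constant of the announced form $C(n)(1+1/T)e^{C(n)r^2(1+1/T)}$.

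The Mehler manipulation and the bookkeeping on $S_T$ are routine; the substantial input --- and the step I expect to be the main obstacle --- is the uncertainty principle above, with the sharp exponent $\rho_1\rho_2$ and uniform in the centers, applied here to the non-concentric pair of balls of different radii $B(x_0,r)$ (in position) and $B(x_0/\sin 2s,\,r/\sin 2s)$ (in frequency). Once that is secured, every trace of $x_0$ cancels and the remaining estimates are mechanical.
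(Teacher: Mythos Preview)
Your approach is essentially the same as the paper's: the paper also uses Mehler's formula to rewrite $e^{-\i sH}f$ as (a modulation of) a Fourier transform, invokes Nazarov's uncertainty principle in $\mathbb{R}^n$ (from Jaming) to obtain a two-time-point observability inequality with constant $Ce^{Cr^2/\sin 2(t'-t)}$, and then integrates in time (over the rectangle $[0,T/3]\times[2T/3,T]$ rather than your diagonal strip $S_T$) to get the estimate for small $T$, handling large $T$ by restriction. The only substantive point you leave open is precisely the one the paper resolves by citation---the quantitative uncertainty principle with exponent $\rho_1\rho_2$, uniform in the centers---which is exactly the $n$-dimensional Nazarov inequality proved in \cite{Jam}.
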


\begin{theorem}\label{thm-1126-1}
 Let $E=B^c(0,\, r)\bigcap\{x\in\mathbb{R}^n:\, a\cdot x\ge 0\}$ with $r>0$ and $a\in \mathbb{S}^{n-1}$. Then the following assertions are equivalent:

 \noindent $(i)$ The set $E$ is an observable set at time $T>0$ for \eqref{equ1.1}, i.e.,
 there is $C=C(T, n)>0$ so that
\begin{align}\label{1120-1}
\int_{\mathbb{R}^n}{|u_0(x)|^2\,\mathrm dx}\leq  C\int_0^{T}\int_E{|e^{-\i tH}u_0|^2\,\mathrm dx}\,\mathrm dt,\; \forall\, u_0\in L^2(\mathbb{R}^n).
\end{align}

\noindent $(ii)$ It holds that $T>\frac{\pi}{2}$.
\end{theorem}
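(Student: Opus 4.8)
The proof rests on two structural features of the Hermite flow on $L^2(\mathbb{R}^n)$: the spectrum of $H=-\Delta+|x|^2$ is $\{n,n+2,n+4,\dots\}$, the $k$-th eigenspace consisting of functions of parity $(-1)^k$; and $H$ commutes with the reflection $(Pv)(x):=v(-x)$. Expanding $u_0$ in the Hermite basis therefore gives the \emph{reflection identity} $e^{-\i(\pi/2)H}u_0(x)=e^{-\i\pi n/2}\,u_0(-x)$, and, since $P$ commutes with $e^{-\i tH}$,
\[
e^{-\i(t+\pi/2)H}u_0(x)=e^{-\i\pi n/2}\,(e^{-\i tH}u_0)(-x),\qquad t\ge0,\ x\in\mathbb{R}^n.
\]
Write $f_{u_0}(t):=\int_E|e^{-\i tH}u_0(x)|^2\,\d x$. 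Because $E\cup(-E)=B^c(0,r)$ with $|E\cap(-E)|=0$, changing variables $x\mapsto-x$ in the $(-E)$-integral and using the identity above yields, for every $\delta>0$,
\[
\int_0^{\delta}\!\int_{B^c(0,r)}|e^{-\i tH}u_0|^2\,\d x\,\d t=\int_0^{\delta}f_{u_0}(t)\,\d t+\int_{\pi/2}^{\pi/2+\delta}f_{u_0}(t)\,\d t .
\]

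To prove $(ii)\Rightarrow(i)$, let $T>\pi/2$ and set $\delta:=T-\pi/2>0$. By Theorem~\ref{thm-1126-2} applied to $B^c(0,r)$ over the interval $(0,\delta)$, there is $c=c(T,n,r)>0$ for which the left-hand side of the last display is $\ge c\|u_0\|_{L^2}^2$. On the right-hand side, $\int_0^{\delta}f_{u_0}\le\int_0^{T}f_{u_0}$ since $\delta<T$ and $f_{u_0}\ge0$, and $\int_{\pi/2}^{\pi/2+\delta}f_{u_0}=\int_{\pi/2}^{T}f_{u_0}\le\int_0^{T}f_{u_0}$ since $[\pi/2,T]\subset[0,T]$. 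Hence $2\int_0^{T}f_{u_0}\ge c\|u_0\|_{L^2}^2$, which is exactly \eqref{1120-1} with $C=2/c$.

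For $(i)\Rightarrow(ii)$ I show that \eqref{1120-1} fails whenever $T\le\pi/2$, using a family of coherent states riding a classical trajectory that passes through the origin at the two instants $t=0$ and $t=\pi/2$ and otherwise stays in the forbidden half-space $\{a\cdot x<0\}$. Let $\psi_R(x):=\pi^{-n/4}\exp(-|x|^2/2-\i R\,a\cdot x)$, so $\|\psi_R\|_{L^2}=1$. Coherent states of the harmonic oscillator remain coherent states and follow the classical flow (a Mehler-formula computation, verified by expanding $\psi_R$ in the Hermite basis); here $|e^{-\i tH}\psi_R(x)|^2=\pi^{-n/2}e^{-|x-x_R(t)|^2}$ with $x_R(t)=-R\sin(2t)\,a$. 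For $t\in(0,\pi/2)$ one has $a\cdot x_R(t)<0$, hence ${\rm dist}(x_R(t),E)\ge|a\cdot x_R(t)|=R\sin2t$, so $\int_E|e^{-\i tH}\psi_R|^2\,\d x\le\pi^{-n/2}\int_{|y|\ge R\sin2t}e^{-|y|^2}\,\d y\to0$ as $R\to\infty$; being bounded by $1$ uniformly in $t$, this quantity has $\int_0^{\pi/2}\!\int_E|e^{-\i tH}\psi_R|^2\,\d x\,\d t\to0$ by dominated convergence. If \eqref{1120-1} held for some $T\le\pi/2$, then with $u_0=\psi_R$ and $R\to\infty$ we would get $1\le C\int_0^{\pi/2}\!\int_E|e^{-\i tH}\psi_R|^2\,\d x\,\d t\to0$, a contradiction; hence $T>\pi/2$.

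The routine parts are verifying the reflection identity and the parity commutation from the eigenvalue list, and checking the exact classical evolution of the coherent states. The two decisive choices are: the value $\delta=T-\pi/2$ in the sufficiency step, which turns a window just longer than a half-period for $E$ into a positive-length window for $B^c(0,r)$ where Theorem~\ref{thm-1126-2} is available; and centering the failing wave packet at the origin, so that its trajectory meets $\{a\cdot x\ge0\}$ only on the null time set $\{0,\pi/2\}$, which is precisely why the threshold $\pi/2$ is itself excluded.
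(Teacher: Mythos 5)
Your proof is correct and follows essentially the same route as the paper's: the same reflection identity $e^{-\i(t+\pi/2)H}u_0(x)=e^{-\i\pi n/2}(e^{-\i tH}u_0)(-x)$ (derived there via Mehler's formula, here via parity of the Hermite basis) combined with Theorem \ref{thm-1126-2} on $B^c(0,r)=E\cup(-E)$ for sufficiency, and the same travelling Gaussian wave packet $\pi^{-n/4}e^{-|x|^2/2-\i R\,a\cdot x}$ for necessity. The only differences are cosmetic: you argue directly with the window $\delta=T-\pi/2$ where the paper argues by contradiction with a minimizing sequence, and you invoke dominated convergence where the paper splits $(0,\pi/2)$ into three pieces to handle the degeneracy of $\sin 2t$ near the endpoints.
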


\begin{theorem}\label{thm-1126-3}
Suppose that $E\subset\mathbb{R}^n$ is an observable set at time $T>0$ for the equation \eqref{equ1.1}, i.e., there is $C=C(T,E)>0$ so that when  $u$ solves \eqref{equ1.1},
\begin{align}\label{equ1.26}
\int_{\mathbb{R}^n}{|u(0, x)|^2\,\mathrm dx}\leq C\int_0^{T}\int_{E}{|u(t, x)|^2\,\mathrm dx}\,\mathrm dt.
\end{align}
Then there is  $L>0$ and $c>0$ so that
\begin{align}\label{equ1.27}
\big|E\bigcap B(y,\, L\rho(y))\big|\geq ce^{-|y|^2}\;\;\mbox{for all}\;\; y\in\mathbb{R}^n,
\end{align}
where $\rho(y):=\max(1, |y|)$.
\end{theorem}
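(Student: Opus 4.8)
The plan is to test the observability inequality \eqref{equ1.26} against a one-parameter family of initial data --- translated ground states of $H$, i.e.\ coherent states --- and to exploit the fact that for the harmonic oscillator such Gaussian wave packets neither spread nor escape to infinity under the flow $e^{-\i tH}$. Fix $y\in\mathbb{R}^n$ and take
\[
u_0^y(x):=\pi^{-n/4}e^{-|x-y|^2/2},\qquad \|u_0^y\|_{L^2(\mathbb{R}^n)}=1,
\]
which is the normalized ground state $\pi^{-n/4}e^{-|x|^2/2}$ of $H$ (eigenvalue $n$) translated by $y$. By Mehler's formula for $e^{-\i tH}$ --- equivalently, by the ladder-operator description of coherent states --- the function $e^{-\i tH}u_0^y$ is again a Gaussian of the \emph{same} width, whose centre moves along the classical harmonic trajectory; explicitly,
\[
|(e^{-\i tH}u_0^y)(x)|^{2}=\pi^{-n/2}\,e^{-|x-z(t)|^{2}},\qquad z(t):=y\cos 2t .
\]
The decisive feature is that $|z(t)-y|=|y|\,|1-\cos 2t|\le 2|y|\le 2\rho(y)$ for \emph{every} $t$ and \emph{every} $T>0$: the confining potential $|x|^2$ keeps both the width and the centre of the packet under control, and it is precisely this that fails for the free equation \eqref{equ-free-sch}.

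Next I would insert $u_0^y$ into \eqref{equ1.26}. Set $B_y:=B(y,L\rho(y))$ with a constant $L>2$ to be chosen, and split the spatial integral over $E$ into its parts on $B_y$ and on $E\setminus B_y$. On $E\cap B_y$ use the crude bound $|e^{-\i tH}u_0^y|^{2}\le\pi^{-n/2}$, which contributes at most $C\pi^{-n/2}T\,|E\cap B_y|$. On $E\setminus B_y$, if $|x-y|>L\rho(y)$ then
\[
|x-z(t)|\ge|x-y|-|z(t)-y|\ge L\rho(y)-2\rho(y)=(L-2)\rho(y),
\]
so the change of variables $w=x-z(t)$ gives $\int_{E\setminus B_y}|(e^{-\i tH}u_0^y)(x)|^{2}\,\mathrm dx\le\pi^{-n/2}\eta_n((L-2)\rho(y))$, where $\eta_n(R):=\int_{|w|\ge R}e^{-|w|^2}\,\mathrm dw$ is decreasing with $\eta_n(R)\to 0$ as $R\to\infty$. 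Altogether
\[
\pi^{n/2}\le CT\,|E\cap B_y|+CT\,\eta_n((L-2)\rho(y)).
\]

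Finally, choose $L=L(n,T,E)>2$ so large that $\eta_n(L-2)\le\tfrac{1}{2}\pi^{n/2}/(CT)$; since $\rho(y)\ge 1$ and $\eta_n$ is decreasing this forces $\eta_n((L-2)\rho(y))\le\tfrac{1}{2}\pi^{n/2}/(CT)$ for \emph{all} $y$ at once, whence
\[
\big|E\cap B(y,L\rho(y))\big|\ \ge\ \frac{\pi^{n/2}}{2CT}\ =:\ c\ >\ 0 .
\]
As $e^{-|y|^2}\le 1$, this is a fortiori the bound \eqref{equ1.27}. (In fact the argument produces the stronger uniform lower bound above, so the Gaussian weight $e^{-|y|^2}$ is not strictly needed here; it is kept because it is the natural scale attached to $H$ and costs nothing.)

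The one step that requires genuine care is the explicit coherent-state evolution $|(e^{-\i tH}u_0^y)(x)|^{2}=\pi^{-n/2}e^{-|x-z(t)|^{2}}$, i.e.\ Mehler's formula and the consequent non-spreading and non-escape of the packet; everything after it is a two-line Gaussian tail estimate. If one prefers to avoid Mehler's formula, one may instead compute the Heisenberg evolution of the first two position moments in the state $e^{-\i tH}u_0^y$: for the harmonic oscillator these obey closed linear ODEs and stay of size $O(|y|)$ and $O(1)$, after which Chebyshev's inequality replaces $\eta_n$ (yielding a polynomial, rather than Gaussian, remainder --- still enough). A minor point is that $L$ must be chosen uniformly in $y$, which is legitimate precisely because $\rho(y)\ge 1$ pushes $\eta_n((L-2)\rho(y))$ below its threshold for every $y$ simultaneously.
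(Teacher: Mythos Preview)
Your argument is correct and in fact yields a stronger conclusion than the theorem asserts. The paper proceeds differently: it takes as initial datum the Hermite \emph{heat kernel} $u_0(x)=K_1(x,y_0)$, so that the solution is $v(t,x)=K_{1+\i t}(x,y_0)$, and then establishes the pointwise Gaussian bound $|K_{s+\i t}(x,y)|\le Cs^{-n/2}\exp\big(-\tfrac{s|x-y|^2}{4(s^2+t^2)}\big)$ by an analytic interpolation lemma of Davies, combining the real-time heat kernel decay with the trivial $|K_{s+\i t}|\le Cs^{-n/2}$. Because $\|K_1(\cdot,y_0)\|_{L^2}^2$ is only bounded below by $C e^{-|y_0|^2}$ (via the $\alpha=0$ term in the eigenfunction expansion), the paper obtains exactly \eqref{equ1.27} and no better. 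Your choice of a translated ground state is cleaner: the coherent-state identity $|(e^{-\i tH}u_0^y)(x)|^2=\pi^{-n/2}e^{-|x-y\cos 2t|^2}$ replaces the Davies lemma by a one-line computation, the packet has unit $L^2$ norm for every $y$, and the centre satisfies $|z(t)-y|\le 2\rho(y)$ uniformly in $t$, so you get the uniform bound $|E\cap B(y,L\rho(y))|\ge c>0$, which is strictly stronger than \eqref{equ1.27}. The only point to be careful about is that the Mehler kernel \eqref{equ1.12} is singular at $t\in\tfrac{\pi}{2}\mathbb{N}$, but your modulus formula extends across these times by continuity (or directly from \eqref{equ1.13}), so the estimate holds for all $t\in[0,T]$ as you use it.
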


Several notes on Theorem \ref{thm-1126-2}-Theorem \ref{thm-1126-3} are as follows:
\begin{itemize}
\item [($\textbf{c}_1$)]  From   Theorem \ref{thm-1126-2} and  Theorem \ref{thm-1126-1}, we see that   for the  Hermite Schr\"{o}dinger equation in $\mathbb{R}^n$, different kinds of  $E$  lead to different types of observability: when $E$ is the complement of any closed ball, it is \emph{an observable set at any time} for \eqref{equ1.1}, while when $E$ is half of the complement of any closed ball, it is \emph{an observable set at  time $T>0$} for \eqref{equ1.1} if and only if  $T>\frac{\pi}{2}$. (These can be viewed as  supplements of Theorem \ref{thm-ob-for-HO}.)
     In this direction, { we mention that  Theorem \ref{thm-1126-1} where $n=1$ has been stated in \cite[Prop. 5.1]{DM} without proof.
     It also deserves  mentioning the} recent paper \cite{BS1} dealing with the time optimal observability for the two-dim Grushin Schr\"{o}dinger equation on the finite cylinder: $\Omega=(-1, 1)\times \mathbb{T}_y$.

\item [($\textbf{c}_2$)] The method to prove Theorem \ref{thm-1126-2} and
 Theorem \ref{thm-1126-1} differs from that to show Theorem \ref{thm-ob-for-HO}:
 the proof of Theorem \ref{thm-ob-for-HO} is based on an   abstract approach provided by  Proposition \ref{prop-re-to-ob}, while the proofs of Theorem \ref{thm-1126-2} and
 Theorem \ref{thm-1126-1} rely on  an observability inequality for \eqref{equ1.1} at two points in time, given by  Theorem \ref{thm-two-point-ob} which is based on Nazarov's uncertainty principle (see \cite{Jam})  and is independently interesting. With regard to the
  observability inequality at two points in time for Schr\"{o}dinger equations, we mention
  papers  \cite{WWZ} and \cite{HS}.

\item [($\textbf{c}_3$)] Theorem \ref{thm-1126-3} gives a necessary condition on \emph{observable sets at some time} for the $n$-dim Hermite Schr\"{o}dinger equation,
    while
Theorem \ref{thm-ob-for-HO} presents a necessary and sufficient condition on \emph{observable sets at some time} for the 1-dim Hermite Schr\"{o}dinger equation.

\item [($\textbf{c}_4$)] The  condition \eqref{equ1.27} is another kind of thick condition.
In the 1-dim case, it  is strictly weaker than the weakly thick condition \eqref{def-1-3} (see Remark \ref{remark5.2,11-18}).
 \end{itemize}

\subsection{Plan of the paper}

The rest of the paper is organized as follows:
 Section \ref{sec2-free-case} proves Theorem \ref{thm-ob-for-free};
 Section \ref{sec3} shows a sufficient condition (see Theorem  \ref{thm-ob-for-genral}) on \emph{the observable sets at some time}
 for Schr\"{o}dinger equations with more general potentials;
 Section \ref{sec4} gives the proof of Theorem \ref{thm-ob-for-HO}, as well as the comparison of thicknesses for two kinds of observable sets;
  Section \ref{two-point-hermite}  presents the proofs of  Theorem \ref{thm-1126-2}-Theorem \ref{thm-1126-3};  Appendix \ref{sec-app} is the last section where
 we give the proof of Lemma \ref{lem-append}, which is the key to show  the necessity of
  \emph{observable sets }  in Theorem \ref{thm-ob-for-HO}.

\section{Proof of Theorem \ref{thm-ob-for-free}}\label{sec2-free-case}

We start with introducing a resolvent condition on the observability for some evolution equation,
i.e., the next Proposition \ref{equ-ab-sch}, which is  another version of \cite[Theorem 5.1]{Mi}
(see also \cite{BZ}, \cite{Lau} {and \cite{RTTT}}). It will be used in  the proof of Theorem \ref{thm-ob-for-free}, as well as in the proof of Theorem \ref{thm-ob-for-HO}.
To state it, we consider the equation:
\begin{equation}\label{equ-ab-sch}
\i \partial_t u(t,x)= \mathscr{A} u(t,x), \;\; (t,x)\in \mathbb{R}^+\times \mathbb{R};\quad u(0,x)\in L^2(\R),
\end{equation}
where  $\mathscr{A}$ is a self-adjoint operator on $L^2(\R)$.
\emph{Observable sets at some time} for \eqref{equ-ab-sch} can be defined in the same manner as that in the definition ($\textbf{D}_1$) in Subsection \ref{sev def}.
\begin{proposition}\label{prop-re-to-ob}
Let  $E\subset \mathbb{R}$ be a measurable set. Then the following statements are equivalent:

\noindent $(i)$ The set  $E$ is an  observable set  at  some  time  for \eqref{equ-ab-sch}.

\noindent $(ii)$ There is $M>0$ and $m>0$ so that
  \begin{equation}\label{equ-resolvent-1}
  \|u\|^2_{L^2(\R)}\leq M\|(\mathscr{A}-\lambda)u\|^2_{L^2(\R)} + m\|u\|^2_{L^2(E)}\;\;\mbox{for all}\;\;
  u\in D(\mathscr{A})\;\;\mbox{and}\;\; \lambda \in \R.
  \end{equation}
   \end{proposition}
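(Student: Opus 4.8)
The plan is to establish Proposition \ref{prop-re-to-ob} by invoking the general resolvent characterization of observability for unitary groups, namely \cite[Theorem 5.1]{Mi}, and checking that its hypotheses are met in our setting. Recall that for a self-adjoint operator $\mathscr{A}$ on a Hilbert space, the group $e^{-\i t\mathscr{A}}$ is unitary, so the $L^2$-norm of solutions to \eqref{equ-ab-sch} is conserved; the cited theorem then asserts the equivalence between the final-time observability inequality
$$
\|u(0)\|^2_{L^2(\R)}\leq C\int_0^T\|u(t)\|^2_{L^2(E)}\,\d t \quad\text{for some }T>0
$$
and the resolvent (or ``wavepacket'') estimate \eqref{equ-resolvent-1}. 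Here I treat $\|\cdot\|_{L^2(E)}$ as the (semi)norm induced by the bounded self-adjoint observation operator $C u := \one_E u$, so that $\|Cu\|^2 = \|u\|^2_{L^2(E)}$, and $\mathscr{A} - \lambda$ appears because the relevant spectral parameter for a Schr\"{o}dinger-type group runs over all of $\R$.

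Concretely, I would proceed in two steps. \emph{Step 1: $(i)\Rightarrow(ii)$.} Assume $E$ is an observable set at some time $T>0$ for \eqref{equ-ab-sch}. Take any $\lambda\in\R$ and any $u\in D(\mathscr{A})$; set $f:=(\mathscr{A}-\lambda)u$ and consider the function $v(t):=e^{-\i t\mathscr{A}}u$. A direct computation (using $\i\partial_t v = \mathscr{A} v$) shows that $w(t):=e^{\i\lambda t}v(t)$ solves $\i\partial_t w = (\mathscr{A}-\lambda)w = e^{\i\lambda t}e^{-\i t\mathscr{A}}f$, and Duhamel's formula gives $w(t) = u + \text{(an integral term controlled by }\|f\|)$. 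Plugging this into the observability inequality for $w$ (which holds because multiplying a solution by the unimodular factor $e^{\i\lambda t}$ and shifting by a bounded forcing preserves the inequality up to constants) yields \eqref{equ-resolvent-1} with $M$ and $m$ depending only on $T$ and $C_{obs}$. \emph{Step 2: $(ii)\Rightarrow(i)$.} This is the substantive implication and is exactly the content of the ``sufficiency'' direction in \cite{Mi}: from the uniform resolvent estimate one reconstructs the time-integrated observability inequality by a spectral/frequency-decomposition argument (a Fourier transform in time combined with the spectral theorem for $\mathscr{A}$, or equivalently a Paley--Wiener-type argument on wavepackets). I would simply cite this, noting that \eqref{equ-resolvent-1} is precisely the hypothesis appearing there after identifying the observation operator with multiplication by $\one_E$.

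The main obstacle is bookkeeping rather than conceptual: one must make sure that the formulation of \cite[Theorem 5.1]{Mi} (which is typically stated for an abstract admissible observation operator and a group $e^{tA}$ with $A$ skew-adjoint) matches ours after the substitutions $A = -\i\mathscr{A}$, $C = \one_E$; in particular one should verify that $\one_E$ is an admissible (here, bounded) observation operator — which is immediate since $\|\one_E u\|_{L^2(\R)}\le\|u\|_{L^2(\R)}$ — and that the spectral parameter in the resolvent estimate ranges over $\R$ (the spectrum-adjacent line), as reflected by writing $\mathscr{A}-\lambda$ with $\lambda\in\R$. Once these identifications are made, the proposition follows directly, and the only genuine computation is the Duhamel estimate in Step 1, which I would present in one or two lines.
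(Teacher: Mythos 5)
Your proposal matches the paper's treatment: the paper does not prove Proposition \ref{prop-re-to-ob} either, but presents it as ``another version of'' \cite[Theorem 5.1]{Mi} (see also \cite{BZ}, \cite{Lau}, \cite{RTTT}), exactly the reduction you carry out, with the observation operator $\one_E$ bounded and hence admissible. Your Duhamel sketch of $(i)\Rightarrow(ii)$ is also correct, so the proposal is fine as it stands.
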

We now on the position to show Theorem \ref{thm-ob-for-free}.
\begin{proof}[\textbf{Proof of Theorem \ref{thm-ob-for-free}}]
Arbitrarily fix a measurable subset $E\subset \mathbb{R}$. We organize the proof by several steps.

\noindent{\it Step 1. We show that (i) of Theorem \ref{thm-ob-for-free} implies
\eqref{equ-resolvent-1} with $\mathscr{A}=-\partial_x^2$.}

Suppose that $E$ is thick. Arbitrarily fix $\lambda\in \mathbb{R}$ and $u\in H^2({\R})$.
In the case that $\lambda\geq 0$, we get from \cite[Proposition 1]{Gre} that for some
$m=m(E)>0$ and $C=C(E)>0$,
\begin{align}\label{equ-929-1}
 \|u\|^2_{L^2(\R)}\leq C\left(\|(-\partial_x^2-\lambda)u\|^2_{L^2(\R)} + m\|u\|^2_{L^2(E)} \right),
 \end{align}
which yields
\eqref{equ-resolvent-1} (with $\mathscr{A}=-\partial_x^2$) for the case when $\lambda\geq 0$.

In the case that $\lambda<0$, we have
$\|-\partial_x^2 u\|^2_{L^2(\R)}\leq \|(-\partial_x^2-\lambda)u\|^2_{L^2(\R)}$,
which, along with \eqref{equ-resolvent-1} where $\lambda=0$, leads to \eqref{equ-resolvent-1} (with $\mathscr{A}=-\partial_x^2$) for the case when $\lambda<0$.

\noindent{\it Step 2. We show that \eqref{equ-resolvent-1} (with $\mathscr{A}=-\partial_x^2$) implies  $(ii)$ of Theorem \ref{thm-ob-for-free}.}

Arbitrarily fix $c\in \mathbb{R}$.
Suppose that $E$ satisfies \eqref{equ-resolvent-1} (with $\mathscr{A}=-\partial_x^2$).
Then we have that for some $M>0$ and $m>0$,
\begin{eqnarray}\label{equ-92-2}
  \|u\|^2_{L^2(\R)}\leq M\|(-\partial_x^2-\lambda)u\|^2_{L^2(\R)} + m\|u\|^2_{L^2(E)}\;\;\mbox{for all}\;\;u\in D(\partial_x^2+c)
 \;\;\mbox{and}\;\;\lambda \in \R,
\end{eqnarray}
since $D(-\partial_x^2+c)=D(\partial_x^2)=H^2(\mathbb{R})$.
By \eqref{equ-92-2} and Proposition \ref{prop-re-to-ob} (with $\mathscr{A}=-\partial_x^2+c$), we find that $E$ is \emph{an observable set at some time} for
\eqref{equ-free-sch}. Hence, $(ii)$ of Theorem \ref{thm-ob-for-free} is true.

\noindent{\it Step 3. It is clear that (ii) of Theorem \ref{thm-ob-for-free} implies (iii) of Theorem \ref{thm-ob-for-free}.}

\noindent{\it Step 4. We show that (iii) of Theorem \ref{thm-ob-for-free} implies (i) of Theorem \ref{thm-ob-for-free}.}

We borrow some ideas from \cite{WWZZ} in this step. Recall that the  kernel of the Schr\"{o}dinger equation \eqref{equ-free-sch} with $c=0$ is
$$
K(t,x)=(4\pi \i t )^{-1/2}e^{-|x|^2/4\i t},\;\; t>0, x\in\mathbb R.
$$
Thus, given $u_0\in \mathscr{S}(\mathbb R)$ (the Schwartz class), the function defined by
\begin{eqnarray}\label{equ-92-3}
(t,x) \longrightarrow\int_{\mathbb{R}}K(t,x-y)u_0(y)\,\mathrm dy,\;\; (t,x)\in(0,\infty)\times\mathbb R,
\end{eqnarray}
is a solution to the equation \eqref{equ-free-sch} (where $c=0$), with the initial condition $u(0,x)=u_0(x)$, $x\in \mathbb{R}$. Arbitrarily fix $x_0\in \mathbb{R}$. By taking
$$
u_0(x)=(4\pi )^{-1/2}e^{-|x-x_0|^2/4},\;\;x\in\mathbb R,
$$
in \eqref{equ-92-3}, we get the following solution to the equation \eqref{equ-free-sch} (where $c=0$):
\begin{align}\label{special-solution}
v(t,x)=(4\pi (\i t+1))^{-\frac{1}{2}}e^{-\frac{|x-x_0|^2}{4(\i t+1)}}, \quad t\geq0, \;x\in\mathbb R.
\end{align}

We now suppose that $E$ satisfies (iii) of Theorem \ref{thm-ob-for-free}.
Then there is $T>0$ and  $C=C_{obs}(T,E)>0$ so that any solution $u$ to \eqref{equ-free-sch} (where $c=0$)
satisfies \eqref{ob-q}, from which, it follows  that
\begin{align}\label{equ-92-4}
\int_{\mathbb{R}}|v(0,x)|^2\,\mathrm dx&\leq C\int_0^{T}\int_{E}|v(t,x)|^2\,\mathrm dx\,\mathrm dt.
\end{align}
Besides, we have the following two observations: First,  a direct computation gives
\begin{align}\label{equ-92-5}
\int_{\mathbb{R}}|v(0,x)|^2\,\mathrm dx = \frac{1}{2\sqrt{2\pi}}.
\end{align}
Second,  for arbitrarily fixed $L>0$, we have
\begin{align}\label{equ-92-6}
&\int_0^{T}\int_{E}|v(t,x)|^2\,\mathrm dx\,\mathrm dt
\nonumber\\
& = \int_0^{T}\int_{E}\frac{1}{4\pi(1+t^2)}e^{-\frac{(x-x_0)^2}{2(1+t^2)}}\,\mathrm dx\,\mathrm dt \leq \int_0^{T}\int_{E}\frac{1}{2\pi}e^{-\frac{(x-x_0)^2}{2(1+T^2)}}\,\mathrm dx\,\mathrm dt \nonumber\\
&  = \frac{T}{2\pi}\left( \int_{E\bigcap[x_0-{L}/{2},x_0+{L}/{2}]} e^{-\frac{(x-x_0)^2}{2(1+T^2)}}\,\mathrm dx + \int_{E\bigcap[x_0-{L}/{2},x_0+{L}/{2}]^c} e^{-\frac{(x-x_0)^2}{2(1+T^2)}}\,\mathrm dx \right).
\end{align}
Since
$$
\int_{E\bigcap[x_0-{L}/{2},x_0+{L}/{2}]} e^{-\frac{(x-x_0)^2}{2(1+T^2)}}\,\mathrm dx \leq \left|E\bigcap[x_0-{L}/{2},x_0+{L}/{2}]\right|
$$
and
$$
\int_{E\bigcap[x_0-{L}/{2},x_0+{L}/{2}]^c} e^{-\frac{(x-x_0)^2}{2(1+T^2)}}\,\mathrm dx \leq  e^{-\frac{(L/2)^2}{4(1+T^2)}}\int_{E\bigcap[x_0-{L}/{2},x_0+{L}/{2}]^c} e^{-\frac{(x-x_0)^2}{4(1+T^2)}}\,\mathrm dx
\leq \sqrt{4\pi(1+T^2)}e^{-\frac{L^2}{16(1+T^2)}},
$$
we deduce from \eqref{equ-92-6} that
\begin{align}\label{equ-92-7}
&\int_0^{T}\int_{E}|v(t,x)|^2\,\mathrm dx\,\mathrm dt
\nonumber\\
&\leq \frac{T}{2\pi}\left( \big|E\bigcap[x_0-{L}/{2},x_0+{L}/{2}]\big|+  \sqrt{4\pi(1+T^2)}e^{-\frac{L^2}{16(1+T^2)}} \right).
\end{align}

Now, it follows from \eqref{equ-92-4}, \eqref{equ-92-5} and \eqref{equ-92-7} that
\begin{eqnarray*}\label{equ-92-8}
\frac{1}{2\sqrt{2\pi}} \leq \frac{CT}{2\pi}\left( \big|E\bigcap[x_0-{L}/{2},x_0+{L}/{2}]\big|+  \sqrt{4\pi(1+T^2)}e^{-\frac{L^2}{16(1+T^2)}} \right).
\end{eqnarray*}
In the above, by taking $L=L_0>0$ so that
$$
\frac{C{T}}{2\pi}\sqrt{4\pi(1+T^2)}e^{-\frac{L_0^2}{16(1+T^2)}} \leq \frac{1}{4\sqrt{2\pi}},
$$
 we find that
$$
\frac{1}{4\sqrt{2\pi}} \leq \frac{CT}{2\pi}  \big|E\bigcap[x_0-{L_0}/{2},x_0+{L_0}/{2}]\big|,
$$
from which, it follows  that
$$
 \big|E\bigcap[x_0-{L_0}/{2},x_0+{L_0}/{2}]\big| \geq \gamma L_0 \mbox{ with } \gamma = \frac{\sqrt{2\pi}}{4CTL_0}.
$$
Since $x_0$ was arbitrarily taken from $\R$, we obtain from the above that
$$
 \big|E\bigcap[x-{L_0}/{2},x+{L_0}/{2}]\big| \geq \gamma L_0\;\;\mbox{ for all }\;\; x\in \R.
$$
This implies that
$$
 \big|E\bigcap[x,x+L_0]\big| \geq \gamma L_0\;\;\mbox{ for all }\;\; x\in \R,
$$
i.e.,  $E$ is a thick set.

Thus, we end the proof of Theorem \ref{thm-ob-for-free}.
\end{proof}

\section{Observable sets for more general  potentials}\label{sec3}

In this section, we will give a sufficient condition on  \emph{observable sets at some time} for the
Schr\"{o}dinger equation:
\begin{align}\label{equ3.8}
\i \partial_t u(t,x)=({-\partial_x^2+V(x)})u(t,x), \;\; (t,x)\in \mathbb{R}^+\times \mathbb{R}; \quad u(0,x)\in L^2(\R),
\end{align}
where the potential $V$ satisfies the following condition:

\textbf{Condition (H).} The real-valued function $V$  belongs to the space $C^3(\mathbb{R})$ and there is
  $c\ge1$ so that

(\romannumeral1) for some compact interval $K\Subset \mathbb{R}$, $V''(x)>0$ and  $x V'(x)\ge 2cV(x)>0$, when $x\in\mathbb{R}\setminus K$;

(\romannumeral2) when $j=1, 2, 3$, $V^{(j)}(x)=O(x^{-j}|x|^{2c})$ as $x\rightarrow\infty$.

{\it Here and in what follows, given two functions $f$ and $g$, by $f(x)=O(g(x))$ as $x\rightarrow\infty$, we mean that
there is $C>0$  and $M>0$ so that $|f(x)|\leq C|g(x)|$, when $|x|\geq M$.}

Several notes on \textbf{Condition (H)} are given in order.

\begin{itemize}
 \item [($\textbf{d}_1$)]
 \textbf{Condition (H)} is a variant of the condition given in  \cite{Ya}, where the smoothness of the fundamental solution of Schr\"{o}dinger equations with similar perturbations was studied. Typical examples of potentials satisfying this condition are as:
$$
V_1(x)=C(1+|x|^2)^c,\; x\in \mathbb{R},\;\;\mbox{with}\;\;C>0,\,\,\,c\ge1;
$$
and
$$
V_2(x)=\sum_{j=0}^{2m}a_j x^j,\; x\in \mathbb{R},\;\;\mbox{with}\;\;a_{2m}>0,\,  a_j\in \R, \,\, m\in \mathbb{N}^+.
$$
  \item [($\textbf{d}_2$)] By  \textbf{Condition (H)}, we have that for some $x_0>0$,
  \begin{eqnarray}\label{3.29,11-5}
V'(x)>0,\;\mbox{when}\;x\geq x_0;
\end{eqnarray}
  we also have two constants $D>D'>0$ so that
\begin{align}\label{equ3.2.1}
D'x^{2c}\leq V(x)\leq Dx^{2c},\;\mbox{when}\;|x|\geq x_0.
\end{align}
The later shows that $V(x)\rightarrow +\infty$ as $|x|$ goes to $+\infty$.
Hence, \textbf{Condition (H)} implies the following weaker condition:
\begin{align}\label{equ-poten}
V \mbox{ is real-valued, locally bounded and } V(x)\rightarrow +\infty,\,\,\,\text{as}\,\,|x|\rightarrow\infty.
\end{align}

 \item [($\textbf{d}_3$)]
Suppose that  $V$ satisfies \eqref{equ-poten}. Then, according to \cite[Theorem 1.1, p.50]{BS}, the operator
$H=-\partial_x^2+V$ has the properties: it is  essentially self-adjoint (i.e., its closure is self-adjoint);
  its resolvent is compact. Thus, we have $\sigma(H)=\{\lambda_k\}_{k=1}^\infty$, with
  \begin{eqnarray}\label{3.4-10-15}
 \lambda_1<\lambda_2<\cdots<\lambda_k\rightarrow +\infty,
 \end{eqnarray}
 where $\lambda_k$, $k\in \mathbb{N}^+$, are all eigenvalues of $H$. We further have that  each $\lambda_k$ is simple. (See Proposition \ref{prop-H}.)

 \item [($\textbf{d}_4$)] Throughout this section, we write $\{\lambda_k\}_{k=1}^\infty$, with \eqref{3.4-10-15},
 for the family of all eigenvalues of $H$, and $\{\varphi_k\}_{k=1}^\infty$ for the family of corresponding normalized eigenfunctions.

\end{itemize}
The main theorem of this section is as:

\begin{theorem}\label{thm-ob-for-genral}
Suppose that   \textbf{Condition (H)} holds. If $E\subset \R$ is a measurable set satisfying
\begin{align}\label{equ-930-4}
\varliminf_{x\rightarrow +\infty} \frac{|E\bigcap [0,x]|}{x}>0,
\end{align}
then  $E$ is an observable set at some time for the equation \eqref{equ3.8}.
\end{theorem}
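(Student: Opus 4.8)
The plan is to use the resolvent characterization of observability, namely Proposition~\ref{prop-re-to-ob}, with $\mathscr{A}=H=-\partial_x^2+V$. So it suffices to prove the uniform resolvent estimate
\begin{align}\label{planaim}
\|u\|_{L^2(\R)}^2\leq M\|(H-\lambda)u\|_{L^2(\R)}^2+m\|u\|_{L^2(E)}^2\quad\text{for all }u\in D(H),\ \lambda\in\R.
\end{align}
Since $H$ has purely discrete spectrum $\{\lambda_k\}$ with simple eigenvalues and eigenfunctions $\{\varphi_k\}$, the standard route (cf. the gap/resolvent reduction mentioned in remark $(\textbf{b}_2)$ and Proposition~\ref{pro-ob-general}) reduces \eqref{planaim} essentially to showing a \emph{uniform lower bound} $\inf_{k}\|\varphi_k\|_{L^2(E)}^2>0$, together with control of near-resonant wave packets via the eigenvalue gaps. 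Thus the heart of the matter is a quantitative statement: there is $c_0>0$ with $\|\varphi_k\|_{L^2(E)}\geq c_0$ for all $k$. (One should be careful that weak thickness on $[0,x]$ only — condition \eqref{equ-930-4} — is a one-sided condition, so the argument cannot use symmetry of eigenfunctions; this is why the section is stated for general $V$ and not just $V=x^{2m}$.)

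To get the uniform lower bound on $\|\varphi_k\|_{L^2(E)}$, the key is the WKB/turning-point structure of $\varphi_k$. Under \textbf{Condition (H)}, the classically allowed region for energy $\lambda_k$ is an interval $[a_k,b_k]$ with $a_k\to-\infty$, $b_k\to+\infty$, determined by $V(a_k)=V(b_k)=\lambda_k$; outside it $\varphi_k$ decays, and inside it $\varphi_k$ oscillates with an amplitude $\sim (\lambda_k-V(x))^{-1/4}$ and a phase given by the action integral. The plan is: (1) use the growth hypothesis $xV'(x)\geq 2cV(x)$ and the bounds \eqref{equ3.2.1} to locate $b_k$ and to show that a fixed fraction of the $L^2$-mass of $\varphi_k$ lives on, say, $[0,b_k]$ (or on a comparable subinterval $[\theta b_k, b_k]$) with controlled oscillation density — i.e., $\varphi_k$ does not concentrate near the origin or near the turning points; (2) observe that on such an interval the probability density $|\varphi_k(x)|^2$ behaves, after averaging over $O(1/\sqrt{\lambda_k})$-scales, like the ``classical'' density $C_k(\lambda_k-V(x))^{-1/2}$, which is bounded below on $[0,\theta b_k]$ by a quantity $\gtrsim 1/b_k$ (using $V\lesssim x^{2c}$); (3) combine this with \eqref{equ-930-4}, which gives $|E\cap[0,b_k]|\geq \gamma b_k$ for all large $k$, to conclude $\int_E|\varphi_k|^2\,\mathrm dx\gtrsim \gamma>0$ uniformly. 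The oscillation of $\varphi_k$ is handled by integrating over windows containing many wavelengths, so that $\int_{E\cap[x,x+\ell]}|\varphi_k|^2$ is comparable to $\tfrac{|E\cap[x,x+\ell]|}{\ell}\int_{[x,x+\ell]}|\varphi_k|^2$ up to a fixed multiplicative constant. The asymptotic expansions for $\varphi_k$ under \textbf{Condition (H)} — of the type recorded later in Lemma~\ref{lem-append} for $V=x^{2m}$, but here needed in the general $C^3$, $|x|^{2c}$-growth setting (this is exactly the kind of estimate proved in \cite{Ya}) — are what make steps (1)--(2) rigorous.

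After the uniform lower bound $\|\varphi_k\|_{L^2(E)}\geq c_0$ is in hand, the resolvent inequality \eqref{planaim} follows by a now-standard argument using the spectral gaps: write $u=\sum_k \hat u_k\varphi_k$, split the spectrum into the block of $k$ with $|\lambda_k-\lambda|$ small and the rest; on the ``rest'' block $\|(H-\lambda)u\|$ controls $\|u\|$ directly, while on the small block one uses that the corresponding $\lambda_k$'s are finitely many / separated (by \eqref{3.4-10-15} and the simplicity of eigenvalues, or more quantitatively by the gap growth $\lambda_{k+1}-\lambda_k\to\infty$ when $c>1$, and by a finite-dimensional argument when $c=1$) so that $\|u\|_{L^2(E)}^2$ for a single eigenmode is $\geq c_0^2\|u\|^2$, and near-resonant combinations of a bounded number of modes are handled by an Ingham-type / almost-orthogonality estimate on $E$. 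Then Proposition~\ref{prop-re-to-ob} gives that $E$ is an observable set at some time, proving Theorem~\ref{thm-ob-for-genral}.

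The step I expect to be the main obstacle is step (1)--(2): proving that $|\varphi_k|^2$ genuinely spreads out like the classical density and, in particular, that a \emph{uniform} (in $k$) positive fraction of its mass sits on a region $[0,\theta b_k]$ over which the averaged density is bounded below by a constant times $1/b_k$, with error terms from the WKB approximation (near the turning point $b_k$ and in the transition layer of width $\sim \lambda_k^{1/6}/V'(b_k)^{1/3}$) that are uniformly negligible. Matching the Airy behaviour at the turning points to the oscillatory WKB region while keeping all constants independent of $k$ — so that the final constant $c_0$ in $\|\varphi_k\|_{L^2(E)}\geq c_0$ does not degenerate — is the delicate part; the hypotheses $xV'(x)\geq 2cV(x)$ and the three derivative bounds in \textbf{Condition (H)} are precisely what is needed to keep the WKB error uniform, and invoking \cite{Ya} (or re-deriving the relevant two-term asymptotics) is the way I would discharge it.
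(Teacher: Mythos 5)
Your proposal is correct and follows essentially the same route as the paper: reduce, via the uniform spectral gap $\lambda_{k+1}-\lambda_k\ge\varepsilon_0$ (from \cite{Ya}) together with Proposition \ref{pro-ob-general} — note that since eigenvalues are simple and uniformly separated each near-resonant block is a single mode, so no Ingham-type estimate is needed even for $c=1$ — to the uniform bound $\inf_k\|\varphi_k\|_{L^2(E)}>0$, which is then obtained from the WKB equidistribution of $|\varphi_k|^2$ combined with the one-sided density of $E$ on $[0,x]$. The only notable difference is that the paper sidesteps the turning-point/Airy matching you flag as the delicate step by working entirely inside $\Omega_k=\{V\le\lambda_k/2\}$, on an interval $[\delta x_k,x_k]$ with $x_k\sim\lambda_k^{1/(2c)}$, where the expansion quoted from \cite{Ya} plus a normalization-based upper bound on the WKB amplitude already yield the uniform lower bound.
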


\subsection{Properties of the operator $H$ with the weak condition \eqref{equ-poten}}\label{sec3.1-abstract-cond}

In this subsection, we will study some properties of eigenvalues and eigenfunctions of $H$ under the assumption
\eqref{equ-poten}.
The conclusions (i) and (ii) of the following Proposition \ref{prop-H} are given by  \cite[Corollary, p.64]{BS} and \cite[Proposition 3.3, p.65]{BS}, respectively.

\begin{proposition}\label{prop-H}
Assume that \eqref{equ-poten} holds. Then the following statements are true:

\noindent $(i)$ Every eigenfunction $\varphi_k$ of $H$ has at most  finite zero points.

\noindent $(ii)$ Every eigenvalue $\lambda_k$ of $H$ is simple, i.e., each eigenspace has dimension one.

\end{proposition}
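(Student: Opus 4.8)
Both assertions are classical facts about one-dimensional Schr\"{o}dinger operators with a potential tending to $+\infty$, and one may simply invoke \cite{BS}; I sketch a self-contained route. First observe that, since $V\in L^\infty_{\mathrm{loc}}(\R)$, every eigenfunction $\varphi$ (indeed every $L^2$ solution of $(H-\lambda)u=0$) lies in $H^2_{\mathrm{loc}}(\R)\subset C^1(\R)$ by interior regularity and satisfies the ODE $\varphi''=(V-\lambda)\varphi$ pointwise a.e. Moreover every zero of $\varphi$ is simple: if $\varphi(x_0)=\varphi'(x_0)=0$ then $\varphi\equiv0$ by uniqueness for the linear Carath\'eodory ODE $y''=(V-\lambda)y$; in particular the zeros of $\varphi$ are isolated.

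\emph{Step 1 (finiteness of the zero set, giving $(i)$).} By \eqref{equ-poten} there is $R>0$ with $V(x)-\lambda_k>0$ for $|x|\ge R$. On each of the rays $(R,\infty)$ and $(-\infty,-R)$, a nontrivial solution of $\varphi''=(V-\lambda_k)\varphi$ has at most one zero: if it had two zeros it would have two consecutive zeros $a<b$ on that ray, and on $(a,b)$ it would keep a fixed sign, say $\varphi>0$; then $\varphi''=(V-\lambda_k)\varphi>0$ there, so $\varphi$ is strictly convex on $[a,b]$ with $\varphi(a)=\varphi(b)=0$, which forces $\varphi<0$ on $(a,b)$, a contradiction. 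On the compact interval $[-R,R]$ the isolated zeros of $\varphi_k$ can only be finitely many, since an accumulation point of zeros would be a non-isolated zero. Hence $\varphi_k$ has at most finitely many zeros.

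\emph{Steps 2--3 (decay and simplicity, giving $(ii)$).} Let $\varphi=\varphi_k\in L^2(\R)$ and set $f:=\varphi^2\ge0$. For $|x|\ge R$,
\[
f''=2(\varphi')^2+2(V-\lambda_k)\varphi^2\ \ge\ 0,
\]
so $f$ is convex on $[R,\infty)$ and on $(-\infty,-R]$ while $f\in L^1(\R)$; a nonnegative convex integrable function on a half-line is nonincreasing and tends to $0$, so $\varphi(x)\to0$ as $|x|\to\infty$ and $f'(x)\to0$. Integrating the displayed identity then gives $\int_x^{\infty}(\varphi')^2\,dt\le-\tfrac12 f'(x)\to0$ as $x\to+\infty$ (and symmetrically near $-\infty$), i.e.\ the tails of $\int(\varphi')^2$ vanish. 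Now let $\varphi,\psi$ be eigenfunctions with the same eigenvalue $\lambda$. Their Wronskian $W:=\varphi\psi'-\varphi'\psi$ has $W'=\varphi\psi''-\varphi''\psi=(V-\lambda)(\varphi\psi-\varphi\psi)=0$, so $W\equiv c$. From $2\varphi\psi'=(\varphi\psi)'+c$ and integration over $[x,x+1]$,
\[
\int_x^{x+1}\varphi\psi'\,dt=\tfrac12\big(\varphi\psi(x+1)-\varphi\psi(x)\big)+\tfrac{c}{2}.
\]
The left side is at most $\sup_{[x,x+1]}|\varphi|\cdot\big(\int_x^{x+1}(\psi')^2\big)^{1/2}\to0$ and the first term on the right tends to $0$ as $x\to+\infty$, so $c=0$; hence $W\equiv0$ and $\psi$ is a scalar multiple of $\varphi$, proving each eigenspace is one-dimensional. (Alternatively, $(ii)$ is immediate from Sturm--Liouville theory: since $V$ is locally bounded with $V\to+\infty$, the operator is limit-point at both $\pm\infty$, so up to scalars there is at most one $L^2$ solution of $(H-\lambda)u=0$.)

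\emph{Main obstacle.} The one delicate point is the decay step: under the weak hypothesis \eqref{equ-poten}, with $V$ only locally bounded and no control on its derivatives or on the rate of growth, one must still obtain both the decay of eigenfunctions and enough $L^2$-control of $\varphi'$ near infinity to close the Wronskian argument; the convexity of $\varphi^2$ (equivalently, positivity of $V-\lambda_k$ far out) is precisely what makes this go through with no extra assumptions. Everything else is routine ODE bookkeeping, and in any case both statements are recorded in \cite{BS}.
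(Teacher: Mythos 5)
Your proposal is correct. The paper itself offers no proof of Proposition \ref{prop-H}: it simply cites \cite[Corollary, p.64]{BS} for $(i)$ and \cite[Proposition 3.3, p.65]{BS} for $(ii)$, which is exactly the shortcut you acknowledge at the outset. Your self-contained route is sound: the simplicity and isolation of zeros, the Sturm-type convexity argument showing at most one zero on each ray where $V-\lambda_k>0$, the convexity of $\varphi^2$ outside a compact set to get decay of $\varphi$ and vanishing tails of $\int(\varphi')^2$, and the Wronskian computation forcing $c=0$ all go through under the weak hypothesis \eqref{equ-poten} (local boundedness plus $V\to+\infty$), precisely because no regularity or growth rate of $V$ beyond positivity of $V-\lambda_k$ at infinity is ever used. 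What your argument buys over the citation is transparency about which features of $V$ are actually needed; the limit-point remark you append for $(ii)$ is also a legitimate one-line alternative.
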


Notice that under \eqref{equ-poten}, $H$ is essentially self-adjoint. Thus we have the following facts:
First, we can put the equation:
\begin{equation}\label{equ-929-5}
\i \partial_t u(t,x)=({-\partial_x^2+V(x)})u(t,x),\;\;t\in \mathbb{R}^+, x\in \mathbb{R}; \quad u(0,x)\in L^2(\R)
\end{equation}
into the framework of \eqref{equ-ab-sch}. Second, \emph{the observable sets at some time} for \eqref{equ-929-5}  can be defined
 in the same manner as that in the definition ($\textbf{D}_1$) in Subsection \ref{sev def}.
Third,
 $-\i H$ generates a  unitary group $e^{-\i tH}$  in $L^2(\R)$.
 Thus, the solution to equation \eqref{equ-929-5} is as:
 $u(t,\cdot) =  e^{-\i tH}u(0,\cdot)$ for each $t\in \R^+$.

The next proposition gives connections among  \emph{observable sets},  eigenfunctions and eigenvalues of $H$.

\begin{proposition}\label{pro-ob-general}
Suppose that \eqref{equ-poten} is true. Further assume that
eigenvalues of $H$ satisfy that
for some  $\varepsilon_0>0$ (independent of $k$),
\begin{align}\label{equ-929-11}
 \lambda_{k+1}-\lambda_k\ge \varepsilon_0>0 \;\; \mbox{for all}\;\; k\geq 1.
\end{align}
Then for any measurable set $E\subset\mathbb{R}$, the following statements  are equivalent:

\noindent $(i)$ The set $E$ is an observable set at some time for \eqref{equ-929-5}.

\noindent $(ii)$ The {$L^2$-}normalized eigenfunctions of $H$
 satisfy that for some $C>0$ (independent of $k$),
\begin{align}\label{equ-929-9}
\int_{E}|\varphi_k(x)|^2\d x \geq C>0\;\;\mbox{ for all }\;\; k\geq 1.
\end{align}
In addition, if  $(ii)$ holds and $H$ satisfies the following stronger spectral gap condition:
\begin{align}\label{equ-929-11-01}
 \lambda_{k+1}-\lambda_k\rightarrow \infty, \quad \mbox{as} \quad k\rightarrow \infty,
\end{align}
then $E$ is an observable  set at any time  for \eqref{equ-929-5}.
\end{proposition}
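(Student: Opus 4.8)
The plan is to reduce the observability to the resolvent condition of Proposition \ref{prop-re-to-ob} and then exploit the uniform spectral gap \eqref{equ-929-11} together with the orthonormal eigenbasis $\{\varphi_k\}$ of $H$. First I would prove the implication $(i)\Rightarrow(ii)$: assuming $E$ is an observable set at some time, Proposition \ref{prop-re-to-ob} (applied with $\mathscr{A}=H$) gives constants $M,m>0$ with $\|u\|_{L^2(\R)}^2\le M\|(H-\lambda)u\|_{L^2(\R)}^2+m\|u\|_{L^2(E)}^2$ for all $u\in D(H)$ and $\lambda\in\R$; testing this with $u=\varphi_k$ and $\lambda=\lambda_k$ kills the first term and yields $1\le m\|\varphi_k\|_{L^2(E)}^2$, i.e.\ \eqref{equ-929-9} with $C=1/m$. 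The substantive direction is $(ii)\Rightarrow(i)$, and here I would again use Proposition \ref{prop-re-to-ob}: it suffices to establish \eqref{equ-resolvent-1}. Fix $\lambda\in\R$ and $u=\sum_k a_k\varphi_k\in D(H)$. Split the index set into $A=\{k:|\lambda_k-\lambda|<\varepsilon_0/2\}$ and its complement. By the uniform gap \eqref{equ-929-11}, $A$ contains \emph{at most one} index, say $k_0$ (if any). For $k\notin A$ we have $|\lambda_k-\lambda|\ge\varepsilon_0/2$, so
\begin{align*}
\Big\|\sum_{k\notin A}a_k\varphi_k\Big\|_{L^2(\R)}^2=\sum_{k\notin A}|a_k|^2\le \frac{4}{\varepsilon_0^2}\sum_{k\notin A}|\lambda_k-\lambda|^2|a_k|^2\le \frac{4}{\varepsilon_0^2}\|(H-\lambda)u\|_{L^2(\R)}^2.
\end{align*}

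Thus everything reduces to controlling the single mode $a_{k_0}\varphi_{k_0}$. Write $u=a_{k_0}\varphi_{k_0}+w$ with $w=\sum_{k\notin A}a_k\varphi_k$. Using \eqref{equ-929-9} for $\varphi_{k_0}$ and the triangle inequality in $L^2(E)$,
\begin{align*}
|a_{k_0}|^2 C\le |a_{k_0}|^2\|\varphi_{k_0}\|_{L^2(E)}^2=\|a_{k_0}\varphi_{k_0}\|_{L^2(E)}^2\le 2\|u\|_{L^2(E)}^2+2\|w\|_{L^2(E)}^2\le 2\|u\|_{L^2(E)}^2+2\|w\|_{L^2(\R)}^2,
\end{align*}
and $\|w\|_{L^2(\R)}^2$ is already bounded by $\tfrac{4}{\varepsilon_0^2}\|(H-\lambda)u\|_{L^2(\R)}^2$ from the previous step. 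Combining $\|u\|_{L^2(\R)}^2=|a_{k_0}|^2+\|w\|_{L^2(\R)}^2$ with these two estimates yields \eqref{equ-resolvent-1} with explicit $M=M(\varepsilon_0,C)$ and $m=m(C)$, and Proposition \ref{prop-re-to-ob} then gives $(i)$. I do not anticipate a genuine obstacle here; the only point requiring care is the bookkeeping when $A=\emptyset$ (then $u=w$ and the estimate is immediate) versus $A=\{k_0\}$, and the fact that the gap bound $|\lambda_k-\lambda|\ge\varepsilon_0/2$ for $k\notin A$ is exactly what \eqref{equ-929-11} provides.

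For the final assertion (that $(ii)$ plus the stronger gap \eqref{equ-929-11-01} upgrades $(i)$ to observability \emph{at any time}), I would argue that \eqref{equ-929-11-01} allows the same decomposition to be done at an \emph{arbitrary} resolution: given any $\delta>0$, define $A_\delta=\{k:|\lambda_k-\lambda|<\delta\}$; since $\lambda_{k+1}-\lambda_k\to\infty$, only finitely many consecutive gaps are below $2\delta$, so $A_\delta$ has cardinality bounded by a constant $N(\delta)$ independent of $\lambda$. One then runs a block-summed version of the above estimate, or, more cleanly, invokes the quantitative observability-from-resolvent statement (the version of \cite[Theorem 5.1]{Mi} / \cite{RTTT} behind Proposition \ref{prop-re-to-ob}) which, under a resolvent inequality valid for all $\lambda$, produces an observability inequality on $(0,T)$ for \emph{every} $T>0$ with $C_{obs}$ depending on $T$; the stronger gap is what lets the constant $M$ in \eqref{equ-resolvent-1} be taken as small as desired (by choosing $\delta$ small, hence $\varepsilon_0$ effectively large away from finitely many modes), and an explicit argument using Ingham-type inequalities for the finite exceptional block on any time interval handles the remaining finitely many frequencies. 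The only mildly delicate part is making the dependence on $\delta$ and the finite block uniform in $\lambda$, which is guaranteed precisely because \eqref{equ-929-11-01} forces $N(\delta)<\infty$ uniformly.
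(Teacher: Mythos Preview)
Your argument for $(i)\Leftrightarrow(ii)$ is correct and takes a somewhat different route from the paper. The paper invokes \cite[Theorem~1.3]{RTTT}, which characterizes observability at some time by a uniform lower bound on wave packets $\varphi\in\bigcup_\lambda\mbox{span}\{\varphi_m:|\lambda_m-\lambda|<\varepsilon\}$, and then notes that under \eqref{equ-929-11} with $\varepsilon=\varepsilon_0/2$ each such packet is a scalar multiple of a single $\varphi_k$. Your approach via Proposition~\ref{prop-re-to-ob} is more self-contained (it uses only the resolvent criterion already stated in the paper) and produces explicit constants; the paper's approach is shorter but imports one more external result.

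Your treatment of the final assertion, however, has a real gap. Proposition~\ref{prop-re-to-ob} and the results of \cite{Mi,RTTT} behind it yield observability at \emph{some} time $T$ controlled by $M$; a single resolvent estimate does not give observability on $(0,T)$ for every $T>0$. Your plan is to make $M$ arbitrarily small, but this is not possible uniformly in $\lambda$: for $\lambda$ near the first few eigenvalues the relevant gaps are only $\ge\varepsilon_0$, so your own decomposition produces $M$ of order $\varepsilon_0^{-2}$ there, regardless of how large the later gaps become. You propose to handle the finite low-frequency block separately by Ingham-type inequalities, which is the right instinct, but you do not say how to recombine the two pieces: the spectral projections onto the low and high blocks commute with $e^{-\i tH}$ but not with multiplication by $\chi_E$, so separate observability estimates for $P_Nu$ and $Q_Nu$ do not simply add to one for $u$ without controlling the cross terms. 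The paper sidesteps all of this by citing \cite[Corollary~6.9.6]{TW}, which is precisely the statement that \eqref{equ-929-9} together with \eqref{equ-929-11-01} gives observability at any time; a direct proof proceeds through a generalized Ingham inequality on the full eigenfunction expansion rather than through the resolvent route.
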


\begin{proof}
First, from the proof of \cite[Theorem 3.1, p.57]{BS}, we see that  $H$ has a compact resolvent. Then according to  \cite[Theorem 1.3, p.195]{RTTT}, we have the fact:
$E$ is  \emph{an observable set at some time} for \eqref{equ-929-5}
 if and only if there exists $ \varepsilon> 0$ and $C>0$ such that
\begin{eqnarray}\label{equ-929-12}
\int_{E}|\varphi(x)|^2\d x \geq C\int_{\R}|\varphi(x)|^2\d x\;\;\mbox{ for all }\;\; \varphi \in \bigcup_{\lambda\in \R}\mbox{span} \Big\{\varphi_m\;:\; m\in J_\varepsilon(\lambda)\Big\},
\end{eqnarray}
where
\begin{eqnarray}\label{3.9,10-14}
J_\varepsilon(\lambda) := \{m\in \mathbb{N}^+ \;:\; |\lambda_m-\lambda|<\varepsilon\}.
 \end{eqnarray}
 From this, we in particular  have what follows:
 \begin{eqnarray}\label{3.10,10-14}
 \mbox{The conclusion } (i) \mbox{ of Proposition \ref{pro-ob-general}} \Longleftrightarrow \eqref{equ-929-12}\;\;\mbox{with}\;\;\varepsilon=\frac{\varepsilon_0}{2}.
 \end{eqnarray}
 (Here, $\varepsilon_0$ is given by \eqref{equ-929-11}).

 Next,  by \eqref{equ-929-11}, \eqref{3.9,10-14} (where $\varepsilon=\varepsilon_0/2$) and $(ii)$ of Proposition \ref{prop-H}, one can directly check that
 \begin{eqnarray*}
 \bigcup_{\lambda\in \R}\mbox{span} \Big\{\varphi_m\;:\; m\in J_\varepsilon(\lambda)\Big\}
 =\bigcup_{k\in \mathbb{N}^+} \{a\varphi_k\;:\; a\in \mathbb{C} \},\;\;\mbox{with}\; \varepsilon=\frac{\varepsilon_0}{2}.
 \end{eqnarray*}
 This yields that
 \begin{eqnarray}\label{3.11,10.14}
  \eqref{equ-929-12}\;\;\mbox{with}\;\;\varepsilon=\frac{\varepsilon_0}{2} \Leftrightarrow \int_{E}|\varphi_k(x)|^2\d x \geq C\int_{\R}|\varphi_k(x)|^2\d x\;\mbox{for all}\;k\in \mathbb{N}^+.
 \end{eqnarray}

 Now,  since   $\int_{\R}|\varphi_k(x)|^2\d x=1$ for all $k\in \mathbb{N}^+$, it follows from
 \eqref{3.10,10-14} and \eqref{3.11,10.14} that $(i)$ $\Leftrightarrow$ $(ii)$.

 Finally, if \eqref{equ-929-11-01} holds, then we can apply \cite[Corolory 6.9.6]{TW}
 to see directly that $E$ is \emph{an observable  set at any time}  for \eqref{equ-929-5}.
       This ends the proof of Proposition \ref{prop-H}.
\end{proof}

\begin{lemma}[Lower bound for low frequency]\label{lem-low-f}
Suppose that \eqref{equ-poten} is true.   Then for
each subset $E\subset\mathbb{R}$ of positive measure and each  $\ell\in \mathbb{N}^+$, there exists $C=C(E,\ell)>0$ so that
\begin{eqnarray}\label{3.13-10-15}
\int_{E}|\varphi_k(x)|^2\d x \geq C, \quad \mbox{when}\; 1\leq k\leq \ell.
\end{eqnarray}
\end{lemma}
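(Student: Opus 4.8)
The plan is to argue by contradiction, exploiting that the statement concerns only a \emph{finite} family $\{\varphi_1,\dots,\varphi_\ell\}$ of fixed $L^2$-normalized eigenfunctions, so one never needs uniformity in $k$. Suppose the conclusion fails. Then there is some fixed $k_0 \in \{1,\dots,\ell\}$ and a sequence of positive-measure sets for which the left-hand side is not bounded below — but since $E$ is fixed here, the failure is simply that $\int_E |\varphi_{k_0}(x)|^2\,\d x = 0$, which forces $\varphi_{k_0} = 0$ a.e.\ on $E$. This is impossible: $\varphi_{k_0}$ is a nonzero eigenfunction of $H=-\partial_x^2+V$, and by $(i)$ of Proposition \ref{prop-H} it has at most finitely many zeros, hence $\{x : \varphi_{k_0}(x)=0\}$ has Lebesgue measure zero, while $|E|>0$. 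Therefore $\int_E |\varphi_{k_0}|^2\,\d x > 0$ for every $k_0$, and one may take $C := \min_{1\le k\le \ell} \int_E |\varphi_k|^2\,\d x > 0$, which depends only on $E$ and $\ell$.

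Rather than invoking the "finite zeros" fact, one could equally well use a unique-continuation argument: $\varphi_{k_0}$ solves the second-order ODE $-\varphi'' + (V-\lambda_{k_0})\varphi = 0$ with $V$ locally bounded, so $\varphi_{k_0}$ is $C^1$; if it vanished on a set of positive measure, that set would contain a point which is a limit of zeros, and then a point where both $\varphi_{k_0}$ and $\varphi_{k_0}'$ vanish, whence $\varphi_{k_0}\equiv 0$ by uniqueness for the linear ODE — contradiction. Either route closes the argument; I would cite Proposition \ref{prop-H}$(i)$ since it is already available in the excerpt and gives the measure-zero conclusion immediately.

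There is essentially no obstacle here: the lemma is a soft finiteness statement, and the only thing to be careful about is making the dependence of $C$ explicit (it genuinely depends on $E$ and $\ell$, through the particular eigenfunctions, with no control as $\ell\to\infty$ — which is exactly why the lemma is separated from the high-frequency analysis). The lemma will be combined later with a matching lower bound for $k > \ell$ (using the asymptotics of $\varphi_k$) to produce the uniform bound \eqref{equ-929-9} needed for Theorem \ref{thm-ob-for-genral} via Proposition \ref{pro-ob-general}.
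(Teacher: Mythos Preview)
Your proposal is correct and follows essentially the same route as the paper: use Proposition~\ref{prop-H}$(i)$ (finitely many zeros, hence zero set of measure zero) to deduce $\int_E |\varphi_k|^2\,\d x>0$ for each fixed $k$, then take $C=\min_{1\le k\le \ell}\int_E|\varphi_k|^2\,\d x$. The paper's proof is just the direct version of this without the contradiction wrapper; your alternative unique-continuation remark is a valid substitute but not needed since Proposition~\ref{prop-H}$(i)$ is already at hand.
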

\begin{proof}
Arbitrarily fix a subset $E\subset\mathbb{R}$ of positive measure and   $\ell\in \mathbb{N}^+$.
First, by $(i)$ of Proposition \ref{prop-H}, we can easily see that for each $1\leq k\leq\ell$, there is $C_k = C(k,E)>0$
so that
\begin{align}\label{equ-930-1}
\int_{E}|\varphi_k(x)|^2\d x \geq C_k.
\end{align}
Next, by setting $C:=\min_{1\leq k\leq \ell}C_k>0$, we get \eqref{3.13-10-15} from \eqref{equ-930-1} at once.
This ends the proof of Lemma \ref{lem-low-f}.
\end{proof}

\subsection{Properties of the operator $H$ with \textbf{Condition (H)}}

In this subsection, we will study some properties of eigenvalues and eigenfunctions of $H$ under \textbf{Condition (H)}.   In particular,  we shall give  a uniform lower bound of high frequency eigenfunctions $\varphi_k$ for all $k>\ell$.
The  ideas in  \cite{Fe,Ti} (where the WKB method was applied to study  the  asymptotic behaviors of eigenfunctions of $H$) will be used.

We start with several facts. Fact One:
  Each $\varphi_k$ satisfies
 \begin{align}\label{equ3.1}
-\varphi_k''(x)+V(x)\varphi_k(x)=\lambda_k \varphi_k(x),\; x\in \mathbb{R}.
\end{align}

Fact Two: By \textbf{Condition (H)} and by \eqref{3.4-10-15}, there is $\bar k_0 \in \mathbb{N}^+$ so that $\lambda_k\geq 1$, when $k\geq \bar k_0$ and so that
\begin{eqnarray}\label{3.20,11-7}
0\in \Omega_k,\;\;\mbox{when}\;\;k\geq \bar k_0,
\end{eqnarray}
where
\begin{align}\label{equ3.2}
\Omega_{k}:=\{x\in \mathbb{R},\,\,\,V(x)\le \lambda_k/2\},\;\;k \geq \bar k_0.
\end{align}
Moreover, we find from   \eqref{equ3.2.1} that $\Omega_{k}$ is a bounded set and that
for some $C>0$ (independent of $k$),
\begin{align}\label{equ3.3}
|\Omega_k|\leq C\lambda_k^{\frac{1}{2c}}\;\;\mbox{for all}\;\; k\geq \bar k_0.
\end{align}

Fact Three:
Recall the following Liouville transform (see e.g. \cite[p. 119]{Ti}):
\begin{equation}\label{equ3.4}
\begin{cases}
y=S(x)=\int_0^x{\sqrt{\lambda_k-V(s)}\,\mathrm ds},\;\; x\in\Omega_k \\[4pt]
w=w(y)=\Big(\lambda_k-V(S^{-1}(y))\Big)^{\frac14} \varphi_k\Big(S^{-1}(y)\Big), \;\; y\in S(\Omega_k).
\end{cases}
\end{equation}
By \eqref{equ3.4}, we see that
\begin{eqnarray}\label{3.22.11-7}
S'(x)>0\;\mbox{over}\; \Omega_k;\;\;\;\;S^{-1}(\cdot)\;\;\mbox{exists over}\; S(\Omega_k).
\end{eqnarray}
By \eqref{equ3.4} and \eqref{3.20,11-7}, we obtain
\begin{eqnarray}\label{3.23.11-7}
0\in S(\Omega_k),\;\;\mbox{as}\;\;k\geq\bar k_0.
\end{eqnarray}
For each $k \geq \bar k_0$, by making the above Liouville transform
to \eqref{equ3.1}, which is restricted over $\Omega_k$, we obtain
\begin{align}\label{equ3.1'}
\frac{d^2w(y)}{dy^2}+w(y)+q(y)w(y)=0,\; y\in S(\Omega_k),
\end{align}
where
$$
q(y)=\frac{V''(x)}{4(\lambda_k-V(x))^2}+\frac{5(V'(x))^2}{16(\lambda_k-V(x))^3},\;\;\mbox{with}\;\;x=S^{-1}(y)\in \Omega_k.
$$

Fact Four: The function $w$ (given by \eqref{equ3.4}) depends on $k$.
By \eqref{equ3.4} and \eqref{3.23.11-7}, we see that
when $k\geq \bar k_0$,
 $0$ is in the domain of $w$. Hence, $w(0)$
and $w'(0)$ make sense.

Fact Five: The  next Lemma \ref{lem3.1} is quoted from \cite{Ya} (see \cite[Lemma 3.1\&3.2]{Ya}) and will  play an important role in our studies. In the proof of Lemma \ref{lem3.1}, \eqref{equ3.1'} was used.

\begin{lemma}\label{lem3.1}
Suppose that \textbf{Condition (H)} holds for some $c\geq 1$.
Let $\Omega_k$ be given by  \eqref{equ3.2}. Let $S$ and $w$ be given by  \eqref{equ3.4}. Then there exists  $C>0$ and $\widetilde{k}_0\in \mathbb{N}^+$ (with $\widetilde{k}_0\geq \bar k_0$ which is given by \eqref{3.20,11-7}) so that when $k\geq \widetilde{k}_0$,
\begin{eqnarray}\label{3.21,11-6}
\varphi_k(x)=(\lambda_k-V(x))^{-\frac14}\cdot \Re(C_{\lambda_k}e^{\i S(x)})+R_k(x)\;\;\mbox{for each}\; x\in \Omega_k,
\end{eqnarray}
where $R_k$ is a function  with the estimate:
\begin{equation}\label{equ3.5}
|R_k(x)|
\leq C(\lambda_k-V(x))^{-\frac14}\cdot|C_{\lambda_k}|\cdot\lambda_k^{-\frac12}\;\;\mbox{for each}\; x\in \Omega_k,
\end{equation}
with
\begin{eqnarray}\label{3.22,11.4}
C_{\lambda_k}=w(0)-\i w'(0).
\end{eqnarray}
Moreover, there is $C'>0$ so that
\begin{align}\label{equ3.6}
|C_{\lambda_k}|\ge C' \lambda_k^{\frac14-\frac{1}{4c}}\;\;\mbox{for all}\;\; k\geq \widetilde{k}_0.
\end{align}
\end{lemma}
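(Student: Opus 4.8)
The statement is the Liouville--Green (WKB) asymptotics for the eigenfunction $\varphi_k$ on the classically allowed set $\Omega_k$, together with a sharp lower bound for the amplitude $|C_{\lambda_k}|$ that is forced by the normalisation $\int_{\R}|\varphi_k|^2\,\d x=1$. Following \cite{Ya}, the plan is: (a) to control the perturbation $q$ appearing in \eqref{equ3.1'} and to solve the transformed equation by a Volterra/Gronwall argument, which produces \eqref{3.21,11-6} and \eqref{equ3.5}; (b) to use the Liouville change of variables \eqref{equ3.4} to bound $\int_{\Omega_k}|\varphi_k|^2\,\d x$ from above by a constant times $|C_{\lambda_k}|^2\lambda_k^{-1/2+1/(2c)}$, which reduces \eqref{equ3.6} to a \emph{uniform lower bound} for $\int_{\Omega_k}|\varphi_k|^2\,\d x$; and (c) to establish that lower bound by a turning-point analysis, which I expect to be the crux.

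\emph{The representation and the remainder estimate.} Restricted to $\Omega_k$, the eigenvalue equation \eqref{equ3.1} becomes, under \eqref{equ3.4}, the perturbed harmonic oscillator $w''+w=-qw$ on the interval $S(\Omega_k)\ni0$. The first point is to show that $\int_{S(\Omega_k)}|q(y)|\,\d y\le C\lambda_k^{-1/2}$ for $k$ large: after the substitution $y=S(x)$ (so that $\d y=\sqrt{\lambda_k-V(x)}\,\d x$) and using $\lambda_k-V\ge\lambda_k/2$ on $\Omega_k$, this reduces to bounding $\lambda_k^{-3/2}\int_{\Omega_k}|V''|\,\d x+\lambda_k^{-5/2}\int_{\Omega_k}|V'|^2\,\d x$, which is $O(\lambda_k^{-1/2})$ by the decay of derivatives in \textbf{Condition (H)} (so that $|V''(x)|\lesssim|x|^{2c-2}$ and $|V'(x)|^2\lesssim|x|^{4c-2}$ for $|x|$ large) together with $|\Omega_k|\le C\lambda_k^{1/(2c)}$ from \eqref{equ3.3}. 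Since $w$ solves $w''+w=-qw$ with data $(w(0),w'(0))$, Duhamel's formula gives the Volterra equation
\[
w(y)=\Re\!\big(C_{\lambda_k}e^{\i y}\big)-\int_0^y\sin(y-s)\,q(s)\,w(s)\,\d s,\qquad \Re\!\big(C_{\lambda_k}e^{\i y}\big)=w(0)\cos y+w'(0)\sin y,
\]
with $C_{\lambda_k}=w(0)-\i w'(0)$ as in \eqref{3.22,11.4}; in particular $|\Re(C_{\lambda_k}e^{\i y})|\le|C_{\lambda_k}|$. Gronwall's inequality on the bounded interval $S(\Omega_k)$ then gives $\|w\|_{L^\infty(S(\Omega_k))}\le|C_{\lambda_k}|\exp\!\big(\int_{S(\Omega_k)}|q|\big)\le2|C_{\lambda_k}|$ for $k$ large, whence $|w(y)-\Re(C_{\lambda_k}e^{\i y})|\le\|w\|_\infty\int_{S(\Omega_k)}|q|\le C|C_{\lambda_k}|\lambda_k^{-1/2}$. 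Transferring back by $\varphi_k(x)=(\lambda_k-V(x))^{-1/4}w(S(x))$ (which is \eqref{equ3.4} solved for $\varphi_k$) yields \eqref{3.21,11-6} with $R_k(x)=(\lambda_k-V(x))^{-1/4}\big(w(S(x))-\Re(C_{\lambda_k}e^{\i S(x)})\big)$, and the previous estimate is precisely \eqref{equ3.5}.

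\emph{Reduction of \eqref{equ3.6}.} Using $\varphi_k(x)=(\lambda_k-V(x))^{-1/4}w(S(x))$ and the same change of variables,
\[
\int_{\Omega_k}|\varphi_k|^2\,\d x=\int_{S(\Omega_k)}\frac{|w(y)|^2}{\lambda_k-V(S^{-1}(y))}\,\d y\le\frac{2}{\lambda_k}\,|S(\Omega_k)|\,\|w\|_\infty^2\le C\,|C_{\lambda_k}|^2\,\lambda_k^{-\frac12+\frac{1}{2c}},
\]
because $|S(\Omega_k)|=\int_{\Omega_k}\sqrt{\lambda_k-V}\,\d x\le\sqrt{\lambda_k}\,|\Omega_k|\le C\lambda_k^{\frac12+\frac{1}{2c}}$. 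Hence \eqref{equ3.6} follows once one knows that $\int_{\Omega_k}|\varphi_k|^2\,\d x\ge c_0>0$ holds uniformly for $k\ge\widetilde k_0$.

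\emph{The mass lower bound, and the main obstacle.} The preceding paragraph goes through with no change when $\Omega_k$ is enlarged to $\Omega_k^\delta:=\{V\le(1-\delta)\lambda_k\}$ for any fixed $\delta\in(0,1)$ (the case $\delta=1/2$ being $\Omega_k$ itself); averaging the oscillatory factor in $|\Re(C_{\lambda_k}e^{\i S(x)})|^2$ through a stationary-phase integration by parts upgrades this to $\int_{\Omega_k^\delta}|\varphi_k|^2\,\d x=\tfrac12|C_{\lambda_k}|^2\int_{\Omega_k^\delta}(\lambda_k-V)^{-1/2}\,\d x\,\big(1+O_\delta(\lambda_k^{-1/2})\big)$, which by \eqref{equ3.2.1} is comparable, up to $\delta$-dependent constants, to $\int_{\Omega_k}|\varphi_k|^2\,\d x$; so it suffices to show $\int_{\Omega_k^\delta}|\varphi_k|^2\,\d x\ge\tfrac12$ for some small fixed $\delta$ and all large $k$. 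Writing
\[
1=\int_{\R}|\varphi_k|^2=\int_{\Omega_k^\delta}|\varphi_k|^2+\int_{\{V>(1+\delta)\lambda_k\}}|\varphi_k|^2+\int_{\{(1-\delta)\lambda_k<V\le(1+\delta)\lambda_k\}}|\varphi_k|^2,
\]
an Agmon estimate in the classically forbidden region makes the second term tend to $0$ exponentially fast in $\sqrt{\lambda_k}$, while matching $\varphi_k$ to an Airy function across the turning point $\{V=\lambda_k\}$ bounds the transition term by $C\delta^{1/2}$; taking $\delta$ small and $k$ large then closes the argument. I expect this last step to be the real obstacle: since $\Omega_k$ is cut off at level $\lambda_k/2$ rather than at $\lambda_k$, the naive energy identity $\int|\varphi_k'|^2+\int V|\varphi_k|^2=\lambda_k$ only excludes $L^2$-mass from $\{V>\lambda_k\}$ up to $O(1)$ constants and is too crude; one genuinely needs the turning-point (Airy) asymptotics near $\{V=\lambda_k\}$ to see that the transition layer carries only an $O(\delta^{1/2})$ fraction of the mass. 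The remaining ingredients --- the bound on $q$, the Volterra/Gronwall step and the change-of-variables bookkeeping --- are routine.
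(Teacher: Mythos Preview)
Your derivation of the representation \eqref{3.21,11-6} and the remainder bound \eqref{equ3.5} via the $L^1$-estimate $\int_{S(\Omega_k)}|q|\,\d y\lesssim\lambda_k^{-1/2}$ plus Duhamel/Gronwall is correct and is exactly the route taken in \cite{Ya} (the paper itself only quotes the lemma, but reproduces the argument in the special case $V=x^{2m}$ in Appendix~\ref{sec-app}, Case~1). Your reduction of \eqref{equ3.6} to a uniform lower bound on $\int_{\Omega_k^\delta}|\varphi_k|^2$ is also correct.

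Where your proposal diverges from the paper is the final step. You identify the mass lower bound as ``the real obstacle'' and propose an Agmon--Airy turning-point analysis. That would work, but it is not what \cite{Ya} does (see \cite[Lemma~3.2]{Ya}, and the paper's Appendix, equation \eqref{eq-app-15}, for the monomial case): the key tool you are missing is the \emph{virial identity}. From $[H,x\partial_x]$ one gets $2\|\varphi_k'\|_{L^2}^2=\langle xV'(x)\varphi_k,\varphi_k\rangle$, and \textbf{Condition~(H)}(\romannumeral1) gives $xV'(x)\ge 2cV(x)$ outside a compact set; combined with $\|\varphi_k'\|_{L^2}^2+\langle V\varphi_k,\varphi_k\rangle=\lambda_k$ this yields
\[
\langle V\varphi_k,\varphi_k\rangle\le\frac{\lambda_k}{c+1}+O(1).
\]
A Chebyshev inequality then gives $\int_{\{V>(1-\delta)\lambda_k\}}|\varphi_k|^2\le\frac{1}{(1-\delta)(c+1)}+o(1)$, which for $c\ge1$ and $\delta$ small (say $\delta<\tfrac12$) is bounded away from $1$ uniformly in $k$. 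This single commutator step replaces both your Agmon decay in $\{V>(1+\delta)\lambda_k\}$ and your Airy matching in the transition zone: the exponentially small forbidden-region mass and the $O(\delta^{1/2})$ turning-point mass are handled simultaneously, with no need to resolve the Airy scale. Your instinct that the raw energy identity is ``too crude'' is right, but the fix is algebraic (one more commutator), not microlocal.
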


Next, we will give an upper bound for the family  $\{|C_{\lambda_k}|\}$. {\it That upper bound shows
that  the lower bound in \eqref{equ3.6} is sharp, as a byproduct.}

\begin{lemma}\label{lem-930-C}
Suppose that \textbf{Condition (H)} (with $c\geq 1$) holds.
Let $C_{\lambda_k}$ be given by \eqref{3.22,11.4}
in Lemma \ref{lem3.1}.  Then there is  $\hat k_0\in \mathbb{N}^+$ and $C>0$ so that
\begin{align}\label{equ3.15}
|C_{\lambda_k}|\le C \lambda_k^{\frac14-\frac{1}{4c}}\;\;\mbox{for all}\;\;k\geq \hat k_0.
\end{align}
\end{lemma}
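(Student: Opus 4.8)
The plan is to obtain the upper bound $|C_{\lambda_k}|\lesssim\lambda_k^{1/4-1/(4c)}$ by running the same analysis that produced the lower bound \eqref{equ3.6}, but in the reverse direction, and then to close the argument using the $L^2$-normalization $\|\varphi_k\|_{L^2(\R)}=1$. Recall $C_{\lambda_k}=w(0)-\i w'(0)$, so $|C_{\lambda_k}|^2=w(0)^2+w'(0)^2$, where $w$ solves the perturbed Helmholtz-type ODE \eqref{equ3.1'} on $S(\Omega_k)$. Since the perturbation $q$ in \eqref{equ3.1'} is small — by \textbf{Condition (H)}(\romannumeral2) and \eqref{equ3.2.1} one checks $|q(y)|=O(\lambda_k^{-1})$ uniformly on $S(\Omega_k)$, and $|S(\Omega_k)|=O(\lambda_k^{1/2+1/(2c)})$ — the "energy" $w(y)^2+w'(y)^2$ is essentially conserved along $y$: differentiating gives $\frac{d}{dy}(w^2+w'^2)=2w'w''+2ww'=-2q(y)w(y)w'(y)$, so by Grönwall the ratio $\bigl(w(0)^2+w'(0)^2\bigr)\big/\bigl(w(y)^2+w'(y)^2\bigr)$ stays between two positive constants for all $y\in S(\Omega_k)$, provided $\int_{S(\Omega_k)}|q|\,dy$ stays bounded; this integral is $O(\lambda_k^{-1}\cdot\lambda_k^{1/2+1/(2c)})=O(\lambda_k^{-1/2+1/(2c)})$, which tends to $0$ since $c\ge1$. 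Hence $|C_{\lambda_k}|^2\asymp w(y)^2+w'(y)^2$ uniformly in $y\in S(\Omega_k)$ for $k$ large.

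Next I would integrate this pointwise comparison over a substantial portion of $S(\Omega_k)$ and translate back via the Liouville change of variables. Choose, say, $\Omega_k':=\{x: V(x)\le\lambda_k/4\}$, a subinterval of $\Omega_k$ with $|\Omega_k'|\asymp\lambda_k^{1/(2c)}$ on which $\lambda_k-V(x)\asymp\lambda_k$; on $S(\Omega_k')$ we still have $S'(x)=\sqrt{\lambda_k-V(x)}\asymp\lambda_k^{1/2}$, so $|S(\Omega_k')|\asymp\lambda_k^{1/2+1/(2c)}$. From the energy comparison, $\int_{S(\Omega_k')}\bigl(w(y)^2+w'(y)^2\bigr)\,dy\gtrsim |C_{\lambda_k}|^2\,|S(\Omega_k')|\gtrsim |C_{\lambda_k}|^2\lambda_k^{1/2+1/(2c)}$. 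On the other hand, $\int_{S(\Omega_k')}w(y)^2\,dy=\int_{\Omega_k'}(\lambda_k-V(x))^{1/2}\varphi_k(x)^2\,S'(x)\,dx=\int_{\Omega_k'}(\lambda_k-V(x))\,\varphi_k(x)^2\,dx\le\lambda_k\|\varphi_k\|_{L^2(\R)}^2=\lambda_k$; the $w'^2$ term is handled similarly, using $w'(y)=\frac{d}{dy}w(y)$ and the explicit form of the Liouville transform to express it through $\varphi_k$ and $\varphi_k'$, together with the a priori bound $\int_{\Omega_k'}\varphi_k'(x)^2\,dx\le\int_\R\varphi_k'^2=\langle H\varphi_k,\varphi_k\rangle-\int_\R V\varphi_k^2\le\lambda_k$ (indeed $\int_\R\varphi_k'^2+\int_\R V\varphi_k^2=\lambda_k$ gives $\int_\R\varphi_k'^2\le\lambda_k$, modulo the sign issue where $V<0$ on the compact set $K$, which contributes only a bounded correction). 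Combining, $|C_{\lambda_k}|^2\lambda_k^{1/2+1/(2c)}\lesssim\lambda_k$, i.e. $|C_{\lambda_k}|^2\lesssim\lambda_k^{1/2-1/(2c)}$, which is precisely \eqref{equ3.15}.

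I expect the main obstacle to be the bookkeeping in the $w'$ contribution: $w'(y)$ is not simply a multiple of $\varphi_k'$, because $w(y)=(\lambda_k-V)^{1/4}\varphi_k\circ S^{-1}$ and differentiating in $y=S(x)$ brings in both $\varphi_k'(x)/S'(x)$ and a term $\tfrac14(\lambda_k-V)^{-3/4}(-V'(x))\varphi_k(x)/S'(x)$; one must check that the latter, weighted appropriately, also integrates to something $\lesssim\lambda_k$ — this uses \textbf{Condition (H)}(\romannumeral2) to control $V'$ by $O(x^{-1}|x|^{2c})$ and hence by $O(\lambda_k^{1-1/(2c)})$ on $\Omega_k'$, which is more than enough. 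A secondary subtlety is that all estimates are only claimed for $x\in\Omega_k$ (outside the classical turning region $\varphi_k$ decays and the Liouville picture breaks down), but since we only need a lower bound on $\int_{S(\Omega_k')}(w^2+w'^2)$ in terms of $|C_{\lambda_k}|^2$ and an upper bound in terms of $\lambda_k$, restricting to $\Omega_k'$ costs nothing. Once the $w'$ term is pinned down, the proof is a short combination of the Grönwall energy estimate and the normalization constraint, mirroring the structure of Lemma \ref{lem3.1} from \cite{Ya} but used "in the opposite direction."
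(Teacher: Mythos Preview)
Your proof is correct and takes a genuinely different route from the paper's. The paper works through the explicit WKB expansion \eqref{3.21,11-6} from Lemma~\ref{lem3.1}: it constructs a family of pairwise disjoint subintervals $E_{k,j}\subset[x_k/2,x_k]\subset\Omega_k$ (with $x_k\asymp\lambda_k^{1/(2c)}$) on which $|\cos(S(x)-\theta_0)|\ge\sqrt3/2$, so that $|\varphi_k(x)|\gtrsim\lambda_k^{-1/4}|C_{\lambda_k}|$ pointwise there; counting these intervals ($\#J\gtrsim\lambda_k^{1/2+1/(2c)}$, each of length $\asymp\lambda_k^{-1/2}$) and inserting into $1\ge\int|\varphi_k|^2$ yields $1\gtrsim|C_{\lambda_k}|^2\lambda_k^{-1/2+1/(2c)}$. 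Your approach avoids both the WKB remainder bound and the interval combinatorics: the Gr\"onwall energy argument on \eqref{equ3.1'} gives $w^2+w'^2\asymp|C_{\lambda_k}|^2$ uniformly on $S(\Omega_k)$, and then integrating over $S(\Omega_k')$ and comparing with $\int(\lambda_k-V)\varphi_k^2\le\lambda_k$ and $\int\varphi_k'^2\le\lambda_k+O(1)$ closes the estimate directly. Your argument is cleaner; the paper's has the side benefit that the pointwise lower bound \eqref{equ-1020-45} on the ``good'' set $G_k$ is reused in Lemma~\ref{lem-low-h}, so the extra work is not wasted. One small correction: with your bound $|q|=O(\lambda_k^{-1})$, the integral $\int_{S(\Omega_k)}|q|\,dy=O(\lambda_k^{-1/2+1/(2c)})$ is only bounded (not $\to0$) when $c=1$; boundedness already suffices for Gr\"onwall, and in fact \textbf{Condition (H)}(\romannumeral2) gives the sharper $|q|=O(\lambda_k^{-1-1/c})$, hence $\int|q|\,dy=O(\lambda_k^{-1/2-1/(2c)})\to0$ for all $c\ge1$.
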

\begin{proof}
By \textbf{Condition (H)}, we have the notes ($\textbf{d}_2$) (see  \eqref{3.29,11-5} and \eqref{equ3.2.1})
and ($\textbf{d}_3$) (see \eqref{3.4-10-15}).
Let $c$ and $x_0$ be given by \textbf{Condition (H)} and the note ($\textbf{d}_2$) respectively.
According to \eqref{3.4-10-15}, there is  $\hat k_0\in \mathbb{N}^+$, with
$\hat k_0\geq  \widetilde{k}_0$
(where  $\widetilde{k}_0$ is given by  Lemma \ref{lem3.1})
 so that when
$  k\geq \hat k_0$,
\begin{align}\label{equ-1020-1}
\lambda_k\geq \max\left\{1,\, 2D(2(x_0+1))^{2c},\, \left(6\sqrt{2}\pi\right)^{2c/(c+1)}(2D)^{1/(c+1)},\, \left(\frac{4C}{\sqrt{3}} \right)^2\right\},
\end{align}
where  $C$ and $D$ are given by
 \eqref{equ3.5} and \eqref{equ3.2.1} respectively.
Arbitrarily fix $ k\geq \hat k_0$. We divide the rest of the proof into several steps.

\noindent \emph{Step 1. Define the following interval:
\begin{align}\label{equ-1025-3}
I_k:=[x_k/2, x_k],\;\;\mbox{with}\;\;x_k:=\alpha\lambda_k^{\frac{1}{2c}},\,\,\,\,   \alpha := \left(\frac{1}{2D}\right)^{\frac{1}{2c}}.
\end{align}
We claim that
\begin{align}\label{equ-1025-2.5}
I_k\subset\Omega_k\bigcap[x_0+1,\infty).
\end{align}}

We first show that $I_k\subset [x_0+1,\infty)$. Indeed, by \eqref{equ-1020-1}, we have $\lambda_k\geq   2D(2(x_0+1))^{2c}$, which, along with \eqref{equ-1025-3}, yields $x_k\geq  2(x_0+1)$. This, together with \eqref{equ-1025-3}, leads to $I_k\subset [x_0+1,\infty)$.

We next show that $I_k\subset \Omega_k$. In fact,
since $I_k\subset [x_0,\infty)$, it follows from \eqref{3.29,11-5} that
\begin{align}\label{equ-1025-4}
V'(x)>0\;\;\mbox{ for all }\; x\in I_k.
\end{align}
Meanwhile,   by the definitions of $x_k$ and $\alpha$ (see \eqref{equ-1025-3}),
and by \eqref{equ3.2.1}, we  find
\begin{align}\label{equ-1025-5}
  V(x_k) = V(\alpha\lambda_k^{\frac{1}{2c}}) \leq \lambda_k/2.
\end{align}
Combining \eqref{equ-1025-4} and \eqref{equ-1025-5} gives that
$$
V(x)\leq \lambda_k/2\;\;\mbox{ for all }\; x\in I_k.
$$
This, along with  \eqref{equ3.2}, leads to $I_k\subset \Omega_k$.
Hence, \eqref{equ-1025-2.5} has been proved.

\noindent
\emph{Step 2.   Define the following set:
\begin{align}\label{equ-1020-2}
J:= \{ j\in \mathbb{N}:   S(x_k/2)+\pi\leq j\pi+\theta_0\leq S(x_k)-\pi\},
\end{align}
where $S(\cdot)$ is given by \eqref{equ3.4} and
$\theta_0$ is defined as:
\begin{align}\label{equ-1025-7}
\theta_0:=\arctan{w'(0)/w(0)},\;\mbox{when}\;w(0)\neq 0;\;\;\theta_0:=\pi/2,\;\mbox{when}\;w(0)=0,
\end{align}
where $w(\cdot)$ is given by \eqref{equ3.4}.
We claim
 \begin{align}\label{equ-1020-3}
\sharp J \geq \frac{\alpha}{6\pi\sqrt{2}}\lambda_k^{\frac{1}{2}+\frac{1}{2c}}\geq 1.
\end{align}
Here  and below, $\sharp J$ denotes the cardinality of $J$.}

Two facts deserve to be mentioned: First,
\eqref{equ-1020-3} holds  for all $\theta_0\in [-\pi/2,\pi/2]$ and that we choose
it satisfying \eqref{equ-1025-7} for later use.
Second, when \eqref{equ-1020-3} is proved, we have that $J\neq\emptyset$.

To prove \eqref{equ-1020-3}, we see from \eqref{equ-1025-2.5}, \eqref{equ3.2.1} and  \eqref{equ3.2} that
\begin{align}\label{equ-1026-3}
0\leq V(x)\leq \lambda_k/2, \quad x\in I_k.
\end{align}
Then, by the definition of $S(x)$ (see \eqref{equ3.4}) and by \eqref{equ-1026-3}, we have
$$
 S(x_k)-S(x_k/2) = \int_{x_k/2}^{x_k}\sqrt{\lambda_k-V(x)}\, \d x \geq \frac{x_k\sqrt{\lambda_k}}{2\sqrt{2}}.
$$
Since $\lambda_k\geq \left(6\sqrt{2}\pi\right)^{2c/(c+1)}(2D)^{1/(c+1)}$
(see \eqref{equ-1020-1})
and because $\theta_0\in (-\pi/2,\pi/2]$ (see \eqref{equ-1025-7}), the above,  along with \eqref{equ-1020-2}
and the definitions of $x_k$ and $\alpha$ (see \eqref{equ-1025-3}), shows
$$
\sharp J \geq \frac{ S(x_k)-S(x_k/2)-2\pi}{\pi}  \geq \frac{x_k\sqrt{\lambda_k}}{2\pi\sqrt{2}} -2=\frac{\alpha \lambda_k^{\frac{1}{2}+\frac{1}{2c}}}{2\pi\sqrt{2}}-2   \geq\frac{\alpha}{ 6\pi \sqrt{2}}\lambda_k^{\frac{1}{2}+\frac{1}{2c}}\geq 1,
$$
which leads to \eqref{equ-1020-3}.

\noindent \emph{Step 3. Define, for each $j\in J$, the following set:
\begin{align}\label{equ-1020-6}
E_{k, j}:=[x_{k, j}-\mu x_{k, j}^{-c}, x_{k, j}],\;\;\mbox{with}\; \mu:=\frac{\pi}{6\cdot2^c\sqrt{D}},
\end{align}
where   $x_{k, j}$   is the unique solution to the equation:
\begin{align}\label{equ3.11}
S(x)=j\pi+\theta_0, \quad x\in I_k.
\end{align}
 We claim
\begin{align}\label{equ-114-3}
E_{k,j}\subset I_k, \quad  j\in J;
\end{align}
\begin{align}\label{equ3.17}
E_{k, j}\bigcap E_{k, j'}=\emptyset\;\;\mbox{for all} \; j,j'\in J \;\;\mbox{with}\;\; j\ne j';
\end{align}
\begin{align}\label{equ3.14}
|\cos{(S(x)-\theta_0)}|\geq \sqrt{3}/2\;\;\mbox{for all}\;\; x\in E_{k,j}\;\;\mbox{and}\;\; j\in J.
\end{align} }

First of all, by \eqref{3.22.11-7} and \eqref{equ-1025-2.5}, we infer that the function $S(\cdot)$ is strictly increasing over  $I_k$. This fact and the definition \eqref{equ-1020-2} imply that   the equation \eqref{equ3.11} has a unique solution $x_{k,j}$, which satisfies that $x_k/2\leq x_{k,j}\leq x_k$ for all $j\in J$.

We  claim that for each $j\in J$,
\begin{align}\label{equ-114-2}
 0\leq V(x)\leq \lambda_k/2,\;\; \mbox{when}\; x\in E_{k,j}.
\end{align}
To this end, we arbitrarily fix $j\in J$. Since $x_k/2\leq x_{k,j}$; $x_k/2\geq x_0+1$ (see \eqref{equ-1025-2.5});  $\mu(x_k/2)^{-c}\leq 1$ (see \eqref{equ-1020-1} and \eqref{equ-1020-6}),
 we have
 \begin{align}\label{equ-114-1}
x_{k, j}-\mu x_{k, j}^{-c} \geq (x_k/2) -\mu(x_k/2)^{-c}\geq x_0.
\end{align}
Since  $x_{k,j}\leq x_k$, it follows by  \eqref{equ-114-1} that
\begin{eqnarray}\label{3.43,11-5}
E_{k, j}\subset [x_0,  x_k].
\end{eqnarray}
Notice that $V'(x)>0$ and $V(x)>0$ over $[x_0,+\infty)$ (see \eqref{3.29,11-5}
and \eqref{equ3.2.1}). From these, \eqref{3.43,11-5} and \eqref{equ-1025-5}, we are led to
\eqref{equ-114-2}.

 We now show \eqref{equ3.14}.
  Since $x_{k,j}$ satisfies \eqref{equ3.11}, we have that when $x\in E_{k,j}$,
\begin{align}\label{equ-1020-4}
|S(x)-\theta_0-j\pi|&=|S(x)-S(x_{k, j})|\leq|x-x_{k, j}|\cdot\sup_{x\in E_{k,j}}|V'(x)|\nonumber\\
&=  |x-x_{k, j}|\cdot\sup_{x\in E_{k,j}}\sqrt{\lambda_k-V(x)} \nonumber\\
&\leq \mu x_{k, j}^{-c}\sqrt{\frac{\lambda_k}{2}}\leq\frac{\mu}{\sqrt{2}}2^c\sqrt{2D} =\frac{\pi}{6}.
\end{align}
(On the last line of \eqref{equ-1020-4}, we used \eqref{equ-114-2} and the fact  $x_{k,j}\geq x_k/2= 2^{-1}(2D)^{-1/(2c)}\lambda_k^{1/(2c)}$, as well as the definition of $\mu$ in \eqref{equ-1020-6}.)
From \eqref{equ-1020-4}, we are led to \eqref{equ3.14} at once.

We next show \eqref{equ-114-3}. Indeed,  by the monotonicity of $S^{-1}$ on $I_k$ (see \eqref{3.22.11-7} and \eqref{equ-1025-2.5})
and  by \eqref{equ-1020-2}, we see
\begin{align}\label{equ-114-4}
x_k/2\leq S^{-1}(j\pi+\theta_0-\frac{\pi}{6}) \leq S^{-1}( j\pi+\theta_0+\frac{\pi}{6}) \leq x_k  \;\;\mbox{for all}\; j\in J.
\end{align}
Meanwhile, by \eqref{equ-1020-4}, we have
\begin{eqnarray}\label{3.46,11-5}
E_{k,j} \subset S^{-1}([j\pi+\theta_0-\frac{\pi}{6}, j\pi+\theta_0+\frac{\pi}{6}])\;\;\mbox{for all}\; j\in J.
\end{eqnarray}
From \eqref{3.46,11-5}  and \eqref{equ-114-4}, we obtain  $E_{k,j}\subset [x_k/2, x_k]$, i.e.,  \eqref{equ-114-3} holds.

We finally show \eqref{equ3.17}. Indeed,  it follows from \eqref{3.46,11-5} that
  for all $j,j'\in J$ with  $j\ne j'$,
\begin{eqnarray}\label{equ-1025-6}
E_{k, j}\bigcap E_{k, j'}\subset S^{-1}\left(K_{k, j}\right)\bigcap S^{-1}\left(K_{k, j'}\right),
\end{eqnarray}
where
\begin{eqnarray*}
K_{k, j}:=\left[j\pi+\theta_0-\frac{\pi}{6}, j\pi+\theta_0+\frac{\pi}{6}\right];
\end{eqnarray*}
\begin{eqnarray*}
K_{k, j'}:=\left[j'\pi+\theta_0-\frac{\pi}{6}, j'\pi+\theta_0+\frac{\pi}{6}\right].
\end{eqnarray*}
Since $S^{-1}$ is strictly monotonic on $I_k$ (see \eqref{3.22.11-7} and \eqref{equ-1025-2.5}),  the set on the right hand side of \eqref{equ-1025-6} is empty.
This leads to \eqref{equ3.17}.

\noindent
\emph{Step 4. Define the following subset:
\begin{align}\label{equ3-101-3}
G_k: = \left\{ x\in I_k:\,\,  |\cos{(S(x)-\theta_0)}|\geq \frac{\sqrt{3}}{2}  \right\}.
\end{align}
We claim
\begin{align}\label{equ-1020-45}
|\varphi_k(x)| \geq \frac{\sqrt{3}}{4}\left(\frac{\lambda_k}{2}\right)^{-\frac{1}{4}}|C_{\lambda_k}|, \quad x\in G_k,
\end{align}
and
\begin{align}\label{equ-1020-5}
G_k\supset\bigcup_{j\in J}E_{k, j}.
\end{align} }

We start with proving \eqref{equ-1020-45}. Because $k\geq \hat k_0\geq \widetilde{k}_0$,
 the results in Lemma \ref{lem3.1} are valid for  $k$.
Thus, we can use   \eqref{3.21,11-6},
 \eqref{equ3.5},  \eqref{3.22,11.4} and \eqref{equ-1025-7} to  get
\begin{align}\label{equ3-1003-5}
\Re (C_{\lambda_k}e^{iS(x)})=w(0)\cos{S(x)}+w'(0)\sin{S(x)}=|C_{\lambda_k}|\cos{(S(x)-\theta_0)}, \;x\in \Omega_k.
\end{align}
 From \eqref{3.21,11-6}, \eqref{equ3.5}, \eqref{equ3-1003-5}, \eqref{equ3-101-3} and \eqref{equ-1026-3}, we see that
\begin{align*}
|\varphi_k(x)|&\geq (\lambda_k-V(x))^{-\frac{1}{4}}\Big( |\Re (C_{\lambda_k}e^{iS(x)})| - C|C_{\lambda_k}|\lambda_k^{-\frac{1}{2}}\big)\\
& \geq \left(\frac{\lambda_k}{2}\right)^{-\frac{1}{4}}|C_{\lambda_k}|(\frac{\sqrt{3}}{2}-C\lambda_k^{-\frac{1}{2}}) \geq \frac{\sqrt{3}}{4}\left(\frac{\lambda_k}{2}\right)^{-\frac{1}{4}}|C_{\lambda_k}|, \;\; \mbox{when}\; x\in G_k.
\end{align*}
(On the last step above, we used the fact that $\lambda_k\geq (\frac{4C}{\sqrt{3}})^2$ which follows from \eqref{equ-1020-1}.)
 This leads to \eqref{equ-1020-45}.

 Now \eqref{equ-1020-5} follows from
 \eqref{equ-114-3}, \eqref{equ3.14} and \eqref{equ3-101-3} directly.

\noindent\emph{Step 5. We complete the proof.}

We have
\begin{align}\label{equ3.18}
1&\ge\int_{G_k}{|\varphi_k|^2\, \d x}\nonumber\\
 (\mbox{by} \eqref{equ-1020-45}) &\geq \int_{G_k} \frac{3}{16} \left(\frac{\lambda_k}{2}\right)^{-\frac{1}{2}}|C_{\lambda_k}|^2\d x \nonumber\\
(\mbox{ by } \eqref{equ-1020-5} \mbox{ and } \eqref{equ3.17})&\geq \sum_{j\in J} \int_{E_{k,j}} \frac{3}{16} \left(\frac{\lambda_k}{2}\right)^{-\frac{1}{2}}|C_{\lambda_k}|^2\, \d x\nonumber\\
&  =  \frac{3}{16} \left(\frac{\lambda_k}{2}\right)^{-\frac{1}{2}}|C_{\lambda_k}|^2 \cdot \sharp J \cdot |E_{k,j}| \nonumber\\
(\mbox{ by } \eqref{equ-1020-3} \mbox{ and } \eqref{equ-1020-6})&\geq  \frac{3\alpha \mu}{96\pi}|C_{\lambda_k}|^2 \lambda_k^{\frac{1}{2c}}x_{k,j}^{-c}\nonumber\\
&\geq \frac{3 \mu}{96\pi}  \alpha^{1-c}\lambda_k^{-\frac{1}{2}+\frac{1}{2c}}|C_{\lambda_k}|^2.
\end{align}
(On the last inequality in \eqref{equ3.18}, we   used the fact that $x_{k, j}\leq x_k$ and the definition of $x_k$ in \eqref{equ-1025-3}.) By  \eqref{equ3.18}, we see
$$
|C_{\lambda_k}|\leq C(D,C,c)\lambda_k^{\frac{1}{4}-\frac{1}{4c}}.
$$
Since the above holds for any  $k\geq \hat k_0$, we obtain  \eqref{equ3.15}.
This ends the proof of Lemma \ref{lem-930-C}.
\end{proof}

The inequality \eqref{equ-1020-45} gives a point-wise lower bound for each eigenfunction with the high frequency. Some ideas used in its proof can be borrowed
to build up the following uniform lower bound for eigenfunctions (with the high frequency) over some kind of thick set:
\begin{lemma}[Lower bound for high frequency]\label{lem-low-h}
Suppose that \textbf{Condition (H)}  holds. If $E\subset \R$ is a measurable set satisfying
\begin{align}\label{equ3-1003-1}
\varliminf_{x\rightarrow +\infty} \frac{|E\bigcap [0,x]|}{x}>0,
\end{align}
then there exists  $k_0\in \mathbb{N}^+$ and $C>0$ so that
\begin{align}\label{equ3-1003-2}
\int_E|\varphi_k(x)|^2\d x \geq C\;\;\mbox{ for all }\;\; k\geq k_0.
\end{align}
\end{lemma}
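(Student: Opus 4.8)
The plan is to feed the WKB asymptotics of Lemma~\ref{lem3.1}, together with the sharp lower bound \eqref{equ3.6} for $|C_{\lambda_k}|$, into the density hypothesis \eqref{equ3-1003-1}; the key difference from the proof of Lemma~\ref{lem-930-C} is that one must work on the \emph{entire} classically allowed half-line, not on a single shell $[x_k/2,x_k]$. Fix $\delta>0$ with $2\delta<\varliminf_{x\to+\infty}|E\cap[0,x]|/x$, so $|E\cap[0,x]|\ge\delta x$ for all large $x$. For $k$ large let $b_k$ be the unique point of $(x_0,\infty)$ with $V(b_k)=\lambda_k/2$; since $V$ is bounded on $[0,x_0]$ and increasing with $V\to+\infty$ on $[x_0,\infty)$, one has $[0,b_k]\subset\Omega_k$ for $k$ large, and by \eqref{equ3.2.1} $b_k\asymp\lambda_k^{1/(2c)}\to+\infty$. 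On $[0,b_k]$ we have (up to a harmless additive constant near the origin) $0\le V(x)\le\lambda_k/2$, hence $S'(x)=\sqrt{\lambda_k-V(x)}\asymp\sqrt{\lambda_k}$, so $S$ maps $[0,b_k]$ diffeomorphically onto $[0,S(b_k)]$ with $S(b_k)\lesssim b_k\sqrt{\lambda_k}$.

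Next I would isolate the set on which the oscillatory WKB factor is non-degenerate: with $\theta_0$ as in \eqref{equ-1025-7} and a small constant $\varepsilon_0\in(0,1)$ to be fixed, put
\[
G_k:=\bigl\{\,x\in[0,b_k]:\ |\cos(S(x)-\theta_0)|\ge\varepsilon_0\,\bigr\}.
\]
Changing variables $y=S(x)$, using $S'\gtrsim\sqrt{\lambda_k}$ and the elementary estimate that $\{\,y\in[0,L]:|\cos(y-\theta_0)|<\varepsilon_0\,\}$ has measure at most $2\arcsin\varepsilon_0\,(L/\pi+1)$, one gets $\bigl|[0,b_k]\setminus G_k\bigr|\le C_1\varepsilon_0 b_k+C_1$ for an absolute constant $C_1$. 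Combining this with $|E\cap[0,b_k]|\ge\delta b_k$, choosing $\varepsilon_0:=\delta/(4C_1)$, and taking $k$ large enough that $C_1\le(\delta/4)b_k$, we obtain
\[
|E\cap G_k|\ \ge\ \delta b_k-C_1\varepsilon_0 b_k-C_1\ \ge\ \tfrac{\delta}{2}\,b_k\ \asymp\ \lambda_k^{1/(2c)}.
\]

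Third, on $G_k$ the eigenfunction is bounded below: from \eqref{3.21,11-6}, the remainder bound \eqref{equ3.5}, the identity $\Re(C_{\lambda_k}e^{\i S(x)})=|C_{\lambda_k}|\cos(S(x)-\theta_0)$ (cf.\ \eqref{equ3-1003-5}), and $\lambda_k-V(x)\lesssim\lambda_k$, we get for $k$ large and $x\in G_k$
\[
|\varphi_k(x)|\ \ge\ (\lambda_k-V(x))^{-1/4}|C_{\lambda_k}|\bigl(\varepsilon_0-C\lambda_k^{-1/2}\bigr)\ \gtrsim\ \varepsilon_0\,\lambda_k^{-1/4}\,|C_{\lambda_k}|,
\]
so that, by \eqref{equ3.6}, $|\varphi_k(x)|^2\gtrsim\varepsilon_0^2\,\lambda_k^{-1/2}|C_{\lambda_k}|^2\gtrsim\varepsilon_0^2\,\lambda_k^{-1/(2c)}$ uniformly on $G_k$. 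Integrating over $E\cap G_k$ and using the lower bound just obtained for $|E\cap G_k|$,
\[
\int_E|\varphi_k(x)|^2\,\d x\ \ge\ \int_{E\cap G_k}|\varphi_k(x)|^2\,\d x\ \gtrsim\ \varepsilon_0^2\,\lambda_k^{-1/(2c)}\,b_k\ \gtrsim\ \varepsilon_0^2\,\delta\ >\ 0,
\]
with a bound independent of $k$; the threshold $k_0$ is the largest of the finitely many indices used (for Lemma~\ref{lem3.1} to apply, for $C\lambda_k^{-1/2}\le\varepsilon_0/2$, for $C_1\le(\delta/4)b_k$, and for $|E\cap[0,b_k]|\ge\delta b_k$).

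I expect the main obstacle to be conceptual rather than computational: the hypothesis \eqref{equ3-1003-1} only controls the density of $E$ on initial segments $[0,x]$, never on the shells $[x/2,x]$ — indeed $E\cap[x_k/2,x_k]$ may well be empty — so the shell-based argument of Lemma~\ref{lem-930-C} cannot be reused, and one is forced to work on all of $[0,b_k]$. The price is that $G_k$ must have \emph{nearly full} measure in $[0,b_k]$ (not density $\approx1/3$ as the analogue built from \eqref{equ3-101-3} would give), which is precisely why the oscillation threshold has to be a small constant $\varepsilon_0$ tuned to $\delta$; the quantitative form of ``nearly full measure'' is the change-of-variables bound for $|[0,b_k]\setminus G_k|$, and the fact that $\varepsilon_0$ is a fixed constant while $\lambda_k\to\infty$ is exactly what keeps the WKB remainder $R_k$ negligible on $G_k$.
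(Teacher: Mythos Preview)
Your argument is correct and follows the same core strategy as the paper: estimate the measure of the ``bad'' set where $|\cos(S(x)-\theta_0)|$ is small, show that $E$ meets its complement in measure $\asymp\lambda_k^{1/(2c)}$, and invoke the pointwise WKB lower bound there together with \eqref{equ3.6}. The one genuine difference is the choice of working interval. You use all of $[0,b_k]$; the paper instead first observes that $|E\cap[0,x]|\ge\gamma x$ forces $|E\cap[\delta x,x]|\ge(\gamma-\delta)x$ (take $\delta=\gamma/2$), and then runs the argument on the shell $I_k^\delta=[\delta x_k,x_k]$. So the paper \emph{does} reuse a shell-based argument --- just with the inner radius tuned to the density $\gamma$ rather than fixed at $1/2$ --- and your final-paragraph claim that ``one is forced to work on all of $[0,b_k]$'' is not accurate. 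Your route is slightly more direct and, as a bonus, avoids the paper's separate appeal to the upper bound \eqref{equ3.15} for $|C_{\lambda_k}|$ when controlling the remainder (you absorb $R_k$ pointwise via $C\lambda_k^{-1/2}\le\varepsilon_0/2$, whereas the paper splits the integral in \eqref{equ3.22} and bounds the error piece using Lemma~\ref{lem-930-C}); the paper's route keeps the analysis away from the origin, where $V$ need not be nonnegative.
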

\begin{proof}
Suppose that $E$ satisfies \eqref{equ3-1003-1}.  Then there is $\gamma\in (0,1]$ and $L>0$ so that
\begin{align}\label{equ-2-23-1}
\frac{|E\bigcap [0,x]|}{x} \geq \gamma \quad \mbox{ for all } x\geq L.
\end{align}
Let $\delta:=\gamma/2\in(0,1/2]$. It follows from \eqref{equ-2-23-1} that
\begin{align}\label{equ-2-23-2}
\frac{|E\bigcap [\delta x,x]|}{x} \geq \gamma/2 \quad \mbox{ for all } x\geq L.
\end{align}
We will use \eqref{equ-2-23-2} later. Now we let
$k_1:=\max\left\{\widetilde{k}_0,\, \hat k_0\right\}$,
 where $\widetilde{k}_0$ and $\hat k_0$ are given by
Lemma \ref{lem3.1} and Lemma \ref{lem-930-C} respectively.
Set
\begin{align}\label{equ-2-23-3}
I^\delta_k:=[\delta x_k, x_k],\;\;k\geq k_1,
\end{align}
where $x_k$ is given by \eqref{equ-1025-3}.
Similar to \eqref{equ-1025-2.5}, we can find  $k_2>k_1$ so that when $k\geq k_2$,
\begin{align}\label{equ-2-23-4}
I^\delta_k\subset \left(\Omega_k\cap[x_0+1,\infty)\cap[L,\infty)\right),
\end{align}
where $\Omega_k$ and $x_0$ are given by \eqref{equ3.2} and \eqref{3.29,11-5}, respectively.
The rest of the proof is organized by several steps.

\noindent{\it Step 1. Given  $k\geq  k_2$ and  $\varepsilon\in (0,1)$, we define
\begin{align}\label{equ3.18.1}
F_{k,\varepsilon}: = \left\{ x\in I^\delta_k:\,\,  \cos^2{\left(S(x)-\theta_0\right)}  \leq \varepsilon  \right\},
\end{align}
where $S(\cdot)$  and $\theta_0$ are given by \eqref{equ3.4} and \eqref{equ-1025-7} respectively.
We claim that there is  a constant $C_1>0$, independent of $\varepsilon$ and $k$, so that
\begin{align}\label{equ3-1003-3}
|F_{k,\varepsilon}|\leq  C_1\sqrt{\varepsilon}\lambda_k^{\frac{1}{2c}}\;\;\mbox{for all}\;\;k\geq k_2\;\;\mbox{and}\;\; \varepsilon\in (0,1).
\end{align}}
For this purpose, we  arbitrarily fix $k\geq k_2$ and $\varepsilon\in (0,1)$. Define the following set:
\begin{align}\label{equ3-1003-4}
J':= \left\{ j\in \mathbb{N}: S(\delta x_k)-\pi\leq j\pi+\frac{\pi}{2}+\theta_0\leq S(x_k)+\pi\right\}.
\end{align}
Several observations are given in order. First, by a very similar way to that used in the proof of \eqref{equ-1020-3},
we can obtain
\begin{align}\label{equ-1020-7}
\sharp J' \leq \frac{S(x_k)-S(\delta x_k)}{\pi}+2\leq \frac{1}{2}\alpha\lambda_k^{\frac{1}{2}+\frac{1}{2c}}+2.
\end{align}
 Second, by \eqref{equ3.18.1} and \eqref{equ3-1003-4},  we have
\begin{align}\label{equ-1021-1}
F_{k,\varepsilon}\subset\bigcup_{j\in J'}\left\{x\in I^\delta_k:\,\,\,j\pi+\frac{\pi}{2}-\arcsin{\sqrt{\varepsilon}}\leq S(x)-\theta_0\leq j\pi+\frac{\pi}{2}+\arcsin{\sqrt{\varepsilon}}\right\}.
\end{align}
Third, there is  $C_2>0$ (independent of $k$ and $\varepsilon$) so that when $j\in J'$,
\begin{align}\label{equ3.18.4}
&\left|\left\{x\in I^\delta_k:\,\,\,j\pi+\frac{\pi}{2}-\arcsin{\sqrt{\varepsilon}}\leq S(x)-\theta_0\leq j\pi+\frac{\pi}{2}+\arcsin{\sqrt{\varepsilon}}\right\}\right|\nonumber\\
&=S^{-1}(j\pi+\frac{\pi}{2}+\theta_0+\arcsin{\sqrt{\varepsilon}})-S^{-1}(j\pi+\frac{\pi}{2}+\theta_0-\arcsin{\sqrt{\varepsilon}})\nonumber\\
&\leq 2(\arcsin{\sqrt{\varepsilon}})\cdot\sup_{x\in I^\delta_k}\frac{1}{\sqrt{\lambda_k-V(x)}}\nonumber\\
&\leq C_2 \sqrt{\varepsilon}  \lambda_k^{-\frac{1}{2}}.
\end{align}
In \eqref{equ3.18.4}, for the first equality, Line 2, we used the fact that $S(\cdot)$ is continuous and  strictly increasing on $I_k$ (which follows from \eqref{equ-2-23-4} and \eqref{3.29,11-5}); for the first inequality, Line 3, we used the rule of the derivative of inverse function and the fact that
$S'(x)=\sqrt{\lambda_k-V(x)}$ (which follows from \eqref{equ3.4}); for the last inequality, Line 4, we used  the fact $V(x)\leq \lambda_k/2$ for $x\in I^\delta_k$ (which follows from \eqref{equ-2-23-4} and \eqref{equ3.2}) and  $\arcsin{\sqrt{\varepsilon}}\sim \sqrt{\varepsilon}$.

According to  \eqref{equ-1020-7}-\eqref{equ3.18.4}, there is $C_1>0$ (independent of $k$ and $\varepsilon$) so that
$$
|F_{k,\varepsilon}|\leq  \sharp J'  C_2 \sqrt{\varepsilon}  \lambda_k^{-\frac{1}{2}}\leq C_1\sqrt{\varepsilon} \lambda_k^{\frac{1}{2c}}\;\;\mbox{for all}\;\;k\geq k_2\;\;\mbox{and}\;\; \varepsilon\in (0,1),
$$
which leads to \eqref{equ3-1003-3}.

\noindent {\it Step 2. We prove \eqref{equ3-1003-2}.}

Several observations are given in order. First, by \eqref{equ-2-23-2} (where $x=x_k$),
\eqref{equ-2-23-3} and \eqref{equ-2-23-4}, we find
\begin{align}\label{equ-1026-0}
|E\bigcap I^\delta_k|\ge \frac{\gamma}{2} \alpha\lambda_k^{\frac{1}{2c}},\;\;\mbox{when}\;\; k\ge k_2.
\end{align}
Write
\begin{eqnarray*}\label{equ-1026-1}
\varepsilon_0:= \Big(\frac{1}{1+ {4C_1}/{(\gamma \alpha)}}\Big)^2 (\in (0,1)).
\end{eqnarray*}
 Then it follows from  \eqref{equ3-1003-3} that
\begin{align}\label{equ-1026-2}
|F_{k,\varepsilon_0}|\leq C_1\sqrt{\varepsilon_0}\lambda_k^{\frac{1}{2c}}\leq \frac{\gamma\alpha}4\lambda_k^{\frac{1}{2c}}
 \;\;\mbox{for all}\; k\geq  k_2.
\end{align}
Combining \eqref{equ-1026-0} and \eqref{equ-1026-2}, we get
\begin{align}\label{equ3.18.6}
\left|(E\bigcap I^\delta_k)\setminus F_{k,\varepsilon_0}\right|\geq \frac{\gamma\alpha}{4}\lambda_k^{\frac{1}{2c}},\;\;\mbox{when}\;\; k\geq k_2.
\end{align}
Second,  by \eqref{equ3.18.1} (where $\varepsilon=\varepsilon_0$), we have that
when $k\geq  k_2$,
\begin{align}\label{equ3.18.7}
\cos^2(S(x)-\theta_0) \geq \varepsilon_0\;\;\mbox{for all}\;\;x\in (E\bigcap I^\delta_k)\setminus F_{k,\varepsilon_0}.
\end{align}
Third, noting that when $k\geq k_2$, we have $I^\delta_k\subset \Omega_k$ (see \eqref{equ-2-23-4}),  then  using  \eqref{3.21,11-6}, \eqref{equ3.5} and \eqref{equ3-1003-5}, we obtain that when $k\geq k_2$,
\begin{multline}\label{equ3.22}
\int_{E}{|\varphi_k(x)|^2\,\mathrm dx}\ge\int_{E\cap  I^\delta_k}{|\varphi_k(x)|^2\,\mathrm dx}\\
\ge \frac{1}{2}\int_{E\cap I^\delta_k}{(\lambda_k-V)^{-\frac{1}{2}}|C_{\lambda_k}|^2\cos^2({S(x)-\theta_0})\,\mathrm dx}-3C\int_{E\cap I^\delta_k}{(\lambda_k-V)^{-\frac12}\cdot|C_{\lambda_k}|^2\cdot\lambda_k^{-1}\,\mathrm dx},
\end{multline}
where $C$ is given by \eqref{equ3.5}.

Next, we will estimate two terms on the right hand side of \eqref{equ3.22}, with the aid of
Lemma \ref{lem3.1}, Lemma \ref{lem-930-C} and the first two facts above-mentioned.
First, since  $V\geq 0$ over $I^\delta_k$ (see\eqref{equ3.2.1} and \eqref{equ-2-23-4}), it follows
from
\eqref{equ3.18.7},
 \eqref{equ3.18.6} and  the lower bound \eqref{equ3.6} that when $k\geq k_2$,
\begin{align}\label{equ3.18.8}
&\int_{E\cap I_k^\delta}{(\lambda_k-V(x))^{-\frac{1}{2}}|C_{\lambda_k}|^2\cos^2({S(x)-\theta_0})\,\mathrm dx}\nonumber\\
&\ge \varepsilon_0\int_{(E\bigcap I_k^\delta)\setminus F_{\varepsilon_0}}{(\lambda_k-V(x))^{-\frac12}|C_{\lambda_k}|^2\,\mathrm dx}\nonumber\\
  &\ge \varepsilon_0\lambda_k^{-\frac12}|C_{\lambda_k}|^2\left|(E\bigcap I_k^\delta)\setminus F_{k,\varepsilon_0}\right|\nonumber\\
&\ge C_3\varepsilon_0\gamma\alpha
\end{align}
for some $C_3>0$ (independent of $k$). Second,
because $V\leq \lambda_k/2$ over $I^\delta_k$ (see \eqref{equ-2-23-4} and \eqref{equ3.2}), it follows from \eqref{equ3.3} and the upper bound \eqref{equ3.15}  that when $k\geq k_2$,
\begin{eqnarray}\label{equ3.22.1}
\int_{E\cap I_k^\delta}{(\lambda_k-V(x))^{-\frac12}|C_{\lambda_k}|^2\lambda_k^{-1}\,\mathrm dx} &\leq&|I_k^\delta|({\lambda_k}/{2})^{-\frac12}|C_{\lambda_k}|^2\lambda_k^{-1} \nonumber\\
 &\leq& C_4\lambda_k^{\frac{1}{2c}}\lambda_k^{-\frac{1}{2}}\lambda_k^{\frac{1}{2}-\frac{1}{2c}} \lambda_k^{-1} = C_4\lambda_k^{-1}
\end{eqnarray}
for some $C_4>0$ (independent of $k$).

Finally, inserting \eqref{equ3.18.8} and \eqref{equ3.22.1} into \eqref{equ3.22}, we find
that when $k\geq  k_2$,
\begin{align}\label{equ-1026-4}
 \int_{E}{|\varphi_k(x)|^2\,\mathrm dx}\geq \frac{1}{2}C_3\varepsilon_0\gamma\alpha - 3C_4\lambda_k^{-1}.
\end{align}
Since $\lambda_k^{-1}\rightarrow 0$ as $k\rightarrow \infty$, we can find $k_0\geq   k_2$ so that
when $k\geq k_0$,
\begin{eqnarray*}
\frac{1}{2}C_3\varepsilon_0\gamma\alpha - 3C_4\lambda_k^{-1} \geq \frac{1}{4}C_3\varepsilon_0\gamma\alpha,
\end{eqnarray*}
which, together with \eqref{equ-1026-4}, leads to \eqref{equ3-1003-2}.

Hence, we end the proof of Lemma \ref{lem-low-h}.
\end{proof}

\subsection{Proof of Theorem \ref{thm-ob-for-genral}}

Arbitrarily fix a subset $E\subset \mathbb{R}$ satisfying \eqref{equ-930-4}. Then we have $|E|>0$.

We first claim that there exists $\varepsilon_0>0$ so that
\begin{align}\label{equ-930-2}
\lambda_{k+1}-\lambda_k\geq \varepsilon_0\;\;\mbox{ for all }\;\; k\in \mathbb{N}^+.
\end{align}
In fact, because of  \textbf{Condition (H)},  we can apply  \cite[Lemma 3.3]{Ya}
to find $k_0\in \mathbb{N}^+$ and $C>0$, which are independent of $k$, so that
  \begin{eqnarray*}
  \lambda_{k+1}-\lambda_k \ge C\lambda_k^{\frac{1}{2}-\frac{1}{2c}}\;\;\mbox{for all}\;\;k\geq k_0,
  \end{eqnarray*}
  which, along with the conclusion $(ii)$ in Proposition \ref{prop-H}, leads to
  \eqref{equ-930-2}.

Next, we claim that there exists $C>0$, independent of $k$, so that
\begin{align}\label{equ-930-3}
\int_E|\varphi_k(x)|^2\d x \geq C\;\;\mbox{ for all }\;\; k\in \mathbb{N}^+.
\end{align}
Indeed, by \eqref{equ-930-4}, we can apply  Lemma \ref{lem-low-h}
to find $k_0\in \mathbb{N}^+$ and $C>0$, independent of $k$, so that \eqref{equ-930-3} holds for all $k\geq k_0$.
This, together with Lemma \ref{lem-low-f}, leads to \eqref{equ-930-3}.

Finally, by \eqref{equ-930-2} and \eqref{equ-930-3}, we can use  Proposition \ref{pro-ob-general}
to see that $E$ is \emph{an observable set at some time} for the equation \eqref{equ3.8}. This ends the proof of Theorem \ref{thm-ob-for-genral}.\qed

\section{Proof of Theorem \ref{thm-ob-for-HO}}\label{sec4}
In this section, we mainly prove Theorem \ref{thm-ob-for-HO}, besides, we give the difference between
\emph{thick sets} and \emph{weakly thick sets}.
The proof of Theorem \ref{thm-ob-for-HO} is based on  the following two theorems:

\begin{theorem}\label{thm-4-suff}
Let $E\subset \mathbb{R}$ be   weakly thick. Then the following conclusions are true:

\noindent $(i)$   The set $E$ is  an observable set at some time  for the equation  \eqref{equ-1.0} with   $m=1$.

\noindent $(ii)$   The set $E$ is  an observable set at any time  for the equation  \eqref{equ-1.0} with   $m\ge 2$.

\end{theorem}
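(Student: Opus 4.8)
The plan is to deduce Theorem \ref{thm-4-suff} from the sufficient condition already established in Theorem \ref{thm-ob-for-genral}, using the parity structure of the eigenfunctions of $H=-\partial_x^2+x^{2m}$ to convert the symmetric density appearing in the definition of \emph{weakly thick} into the one-sided density \eqref{equ-930-4}.

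First I would record three preliminary facts. \emph{(a)} The potential $V(x)=x^{2m}$ satisfies \textbf{Condition (H)} with $c=m\ge 1$ (it is precisely the example $V_2$ of note ($\textbf{d}_1$) with $a_{2m}=1$ and the other coefficients $0$; note $xV'(x)=2mV(x)$), so Theorem \ref{thm-ob-for-genral} and Proposition \ref{pro-ob-general} apply to \eqref{equ-1.0}. \emph{(b) Key Observation}: since $V$ is even, $H$ commutes with the parity operator $(P\varphi)(x):=\varphi(-x)$; because each eigenvalue $\lambda_k$ is simple (Proposition \ref{prop-H}$(ii)$), its one-dimensional eigenspace is $P$-invariant, so $P\varphi_k=\pm\varphi_k$, i.e. each normalized eigenfunction $\varphi_k$ is even or odd. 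In particular $|\varphi_k|^2$ is an even function, whence $\int_{-A}|\varphi_k|^2=\int_{A}|\varphi_k|^2$ for every measurable $A\subset\R$, where $-A:=\{-x:x\in A\}$. \emph{(c)} The spectral gap \eqref{equ-929-11} holds: $\lambda_{k+1}-\lambda_k\equiv 2$ when $m=1$, while $\lambda_{k+1}-\lambda_k\to\infty$ when $m\ge 2$ (see \eqref{equ4.34}), which also gives the stronger condition \eqref{equ-929-11-01}.

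The core step is the symmetrization. Given $E$ weakly thick, set $\widetilde E:=E\cup(-E)$, a symmetric measurable set. Since $\widetilde E\cap[0,x]\supseteq(E\cap[0,x])\cup\bigl(-(E\cap[-x,0])\bigr)$, we get $|\widetilde E\cap[0,x]|\ge\max\bigl(|E\cap[0,x]|,\,|E\cap[-x,0]|\bigr)\ge\tfrac12|E\cap[-x,x]|$ for every $x>0$, and therefore
$$
\varliminf_{x\to+\infty}\frac{|\widetilde E\cap[0,x]|}{x}\ \ge\ \tfrac12\,\varliminf_{x\to+\infty}\frac{|E\cap[-x,x]|}{x}\ >\ 0 .
$$
Thus $\widetilde E$ satisfies \eqref{equ-930-4}, so Theorem \ref{thm-ob-for-genral} says $\widetilde E$ is an observable set at some time for \eqref{equ-1.0}; by Proposition \ref{pro-ob-general} (direction $(i)\Rightarrow(ii)$, legitimate by fact (c)) this means $\int_{\widetilde E}|\varphi_k|^2\ge C_0>0$ for all $k$. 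Using fact (b), $C_0\le\int_{\widetilde E}|\varphi_k|^2\le\int_E|\varphi_k|^2+\int_{-E}|\varphi_k|^2=2\int_E|\varphi_k|^2$, so $\int_E|\varphi_k|^2\ge C_0/2$ for all $k$. Finally I would apply Proposition \ref{pro-ob-general} to $E$ itself: direction $(ii)\Rightarrow(i)$ (valid by (c)) gives that $E$ is an observable set at some time for \eqref{equ-1.0}, proving $(i)$; and for $m\ge 2$, since \eqref{equ-929-11-01} holds, the last clause of Proposition \ref{pro-ob-general} upgrades this to an observable set at any time, proving $(ii)$.

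The only genuinely delicate point is the symmetrization step: one must be sure that the one-sided density demanded by Theorem \ref{thm-ob-for-genral} is really controlled by the two-sided density in the hypothesis, and that the evenness of $|\varphi_k|^2$ transfers the resulting lower bound from $\widetilde E$ back to $E$ with only a factor-$2$ loss; everything else is assembling results from Section \ref{sec3}. (A minor check inside the displayed inequality: $\varliminf(a_x+b_x)\ge\varliminf a_x$ when $b_x\ge 0$, together with the obvious measurability of $-E$.)
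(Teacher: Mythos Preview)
Your proof is correct and follows essentially the same approach as the paper: pass to a set with one-sided density via symmetrization, invoke Theorem \ref{thm-ob-for-genral} and Proposition \ref{pro-ob-general} to get a uniform lower bound on $\int|\varphi_k|^2$, then transfer back to $E$ using the parity of eigenfunctions, and finally apply Proposition \ref{pro-ob-general} once more (together with the gap condition \eqref{equ-929-11-01} for $m\ge 2$). The only cosmetic difference is that the paper reflects just the negative part of $E$ into $[0,\infty)$, taking $\widetilde E=(E\cap[0,\infty))\cup(-(E\cap(-\infty,0)))$, which yields $\int_E|\varphi_k|^2\ge\int_{\widetilde E}|\varphi_k|^2$ without the factor-$2$ loss you incur with $\widetilde E=E\cup(-E)$; this is immaterial to the argument.
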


\begin{theorem}\label{thm-4-ne}
If  $E$  is an observable set at some time for the equation \eqref{equ-1.0} with $m\in \mathbb{N}^+$,
then it is  weakly thick.
\end{theorem}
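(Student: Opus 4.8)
The plan is to run the chain of equivalences from Proposition \ref{pro-ob-general} backwards. Since the potential $V(x)=x^{2m}$ with $m\in\mathbb{N}^+$ satisfies \textbf{Condition (H)} (with $c=m$), the eigenvalues obey the gap condition \eqref{equ-929-11}, so Proposition \ref{pro-ob-general} applies: if $E$ is an observable set at some time for \eqref{equ-1.0}, then there is $C_0>0$ with $\int_E|\varphi_k(x)|^2\,\d x\geq C_0$ for all $k\in\mathbb{N}^+$, where $\{\varphi_k\}$ are the $L^2$-normalized eigenfunctions of $H=-\partial_x^2+x^{2m}$. The goal is to deduce from this uniform lower bound that $\varliminf_{x\to+\infty}|E\cap[-x,x]|/x>0$. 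The idea is that each $\varphi_k$ is concentrated, up to exponentially small tails, on the classically allowed region $\Omega_k=\{V\leq \lambda_k/2\}$ — more precisely on the turning-point interval $\{V(x)\leq\lambda_k\}$, which by \eqref{equ3.2.1} is comparable to $[-c\lambda_k^{1/2m},c\lambda_k^{1/2m}]$ — and on that region $\varphi_k$ has size roughly $(\lambda_k-V)^{-1/4}$, which is $O(\lambda_k^{-1/4})$ away from the turning points. So $\int_{E\cap[-R_k,R_k]}|\varphi_k|^2$, with $R_k$ of order $\lambda_k^{1/2m}$, is bounded above by something like $\lambda_k^{-1/2}|E\cap[-R_k,R_k]|$ plus a term from the turning-point neighborhoods plus an exponentially small tail contribution. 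Combining with the lower bound $C_0$ forces $|E\cap[-R_k,R_k]|\gtrsim \lambda_k^{1/2}\sim R_k^m$... but that would be too strong; the correct bookkeeping must only yield $|E\cap[-R_k,R_k]|\gtrsim R_k$, so the estimate of $\int|\varphi_k|^2$ on $[-R_k,R_k]$ must be $\lesssim R_k^{-1}|E\cap[-R_k,R_k]| + o(1)$, which means the $L^\infty$-type bound on $\varphi_k$ used must be the sup-norm bound of order $(\text{length of allowed region})^{-1/2}\sim \lambda_k^{-1/(4m)}\cdot\lambda_k^{\text{something}}$ — i.e. one needs $\|\varphi_k\|_{L^2(E\cap[-R_k,R_k])}^2\lesssim \|\varphi_k\|_{L^\infty}^2\,|E\cap[-R_k,R_k]|$ together with $\|\varphi_k\|_{L^\infty}^2\lesssim 1/R_k$.

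Concretely, I would proceed as follows. First, record the asymptotics of the eigenvalues for $V=x^{2m}$: by the Bohr--Sommerfeld / WKB formula (cited as \eqref{equ4.34}) one has $\lambda_k\sim c_m k^{2m/(m+1)}$, and in particular $\lambda_{k+1}/\lambda_k\to 1$ and $\lambda_k^{1/(2m)}\sim c_m' k^{1/(m+1)}$. Second, invoke the pointwise description of $\varphi_k$ on the allowed region — this is exactly what Lemma \ref{lem-append} (in the Appendix) is for: it gives $\varphi_k(x)=(\lambda_k-V(x))^{-1/4}\Re(C_{\lambda_k}e^{\i S(x)})+R_k(x)$ with the error bound \eqref{equ3.5} and, crucially, the two-sided bound $|C_{\lambda_k}|\asymp\lambda_k^{1/4-1/(4m)}$ from \eqref{equ3.6} and Lemma \ref{lem-930-C}. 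From this, on the shrunken allowed interval $\Omega_k$ one gets $|\varphi_k(x)|\lesssim \lambda_k^{-1/4}|C_{\lambda_k}|\lesssim \lambda_k^{-1/(4m)}$, i.e. $\|\varphi_k\|_{L^\infty(\Omega_k)}^2\lesssim \lambda_k^{-1/(2m)}\asymp 1/R_k$ with $R_k:=\lambda_k^{1/(2m)}$. Third, handle the tail $\mathbb{R}\setminus\Omega_k$: outside the turning points $V>\lambda_k$ and standard Agmon/sub-solution estimates give $\int_{|x|>c\lambda_k^{1/2m}}|\varphi_k|^2\to 0$ (indeed exponentially); one must also absorb the thin neighborhoods of the two turning points where $(\lambda_k-V)^{-1/4}$ blows up, but there $\varphi_k$ is controlled by an Airy-type bound of size $\lambda_k^{-1/(6m)}$-ish on an interval of length $\lambda_k^{-1/(6m)}$-ish, contributing $o(1)$. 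Fourth, assemble: $C_0\leq \int_E|\varphi_k|^2 = \int_{E\cap\Omega_k}|\varphi_k|^2 + \int_{E\setminus\Omega_k}|\varphi_k|^2 \leq \|\varphi_k\|_{L^\infty(\Omega_k)}^2\,|E\cap\Omega_k| + o(1) \lesssim R_k^{-1}\,|E\cap[-C R_k, C R_k]| + o(1)$. Hence $|E\cap[-CR_k,CR_k]|\gtrsim R_k$ for all large $k$. Since $R_k\to\infty$ and consecutive $R_k$ have bounded ratios ($R_{k+1}/R_k\to 1$), interpolating over the gaps gives $\varliminf_{x\to+\infty}|E\cap[-x,x]|/x>0$, i.e. $E$ is weakly thick.

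The main obstacle I anticipate is the rigorous control near and beyond the classical turning points, where the uniform WKB form of Lemma \ref{lem3.1} degenerates. The bound \eqref{equ3.5} is stated only on $\Omega_k=\{V\leq\lambda_k/2\}$, so one needs an additional ingredient — either the Appendix Lemma \ref{lem-append} providing a global asymptotic expansion (Airy uniformization near the turning points plus exponential decay beyond them), or a soft Agmon-type exponential-decay lemma for $\int_{\{V\geq\lambda_k\}}|\varphi_k|^2$ plus a crude $L^2$ bound on the annulus $\{\lambda_k/2\leq V\leq \lambda_k\}$. A secondary subtlety is making sure the exponents balance so that the final inequality is $|E\cap[-R_k,R_k]|\gtrsim R_k$ and not something vacuous or impossibly strong; this is exactly governed by the fact that $\|\varphi_k\|_{L^\infty}^2\asymp 1/(\text{measure of allowed region})$, which is the content of pairing the lower bound \eqref{equ3.6} with the upper bound \eqref{equ3.15} — both already in hand. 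A final bookkeeping point is the passage from the discrete sequence $R_k$ to all real $x$, which is harmless precisely because $\lambda_{k+1}/\lambda_k\to1$.
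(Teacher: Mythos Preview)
Your strategy is essentially the paper's: from $\int_E|\varphi_k|^2\geq C_0$ and the WKB asymptotics of Lemma~\ref{lem-append}, bound $|\varphi_k|^2$ from above by $\mu_k^{m-1}(\mu_k^{2m}-x^{2m})^{-1/2}$ in the allowed region, control the turning-point and forbidden regions separately, and extract $|E\cap[-\rho\mu_k,\rho\mu_k]|\gtrsim\mu_k$. Your final passage from the discrete sequence $R_k=\mu_k$ to all real $x$ via $R_{k+1}/R_k\to1$ is exactly what Lemma~\ref{lemma4.1-4-17} and Proposition~\ref{prop-915} formalize.

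There is, however, one genuine gap in your assembly. You write
\[
C_0\leq \int_{E\cap\Omega_k}|\varphi_k|^2+\int_{E\setminus\Omega_k}|\varphi_k|^2\leq \|\varphi_k\|_{L^\infty(\Omega_k)}^2\,|E\cap\Omega_k|+o(1),
\]
with $\Omega_k=\{V\leq\lambda_k/2\}$. The $o(1)$ is false: the annulus $\{\lambda_k/2\leq V<\lambda_k\}$, i.e.\ $2^{-1/(2m)}\mu_k\leq|x|<\mu_k$, carries a \emph{fixed positive fraction} of the $L^2$-mass of $\varphi_k$ (after the substitution $x=\mu_k t$ the integral there is $\sim\int_{2^{-1/(2m)}}^1(1-t^{2m})^{-1/2}\,dt>0$, independent of $k$). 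So if $E$ happens to cover that annulus, the second term does not vanish. Neither Agmon decay (which only kicks in for $V>\lambda_k$) nor the Airy/turning-point estimate (which covers a region of length $\sim\mu_k^{-(2m-1)/3}$) helps here, and the ``crude $L^2$ bound on the annulus'' you allude to cannot be $o(1)$.

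The repair is exactly what the paper does: replace the fixed cut-off $2^{-1/(2m)}$ by a parameter $\rho\in(0,1)$, use your $L^\infty$ bound on $\{|x|\leq\rho\mu_k\}$ (where $(\mu_k^{2m}-x^{2m})^{-1/2}\leq(1-\rho^{2m})^{-1/2}\mu_k^{-m}$), and then choose $\rho=\rho_0$ close enough to $1$ that $\int_{\{\rho_0\mu_k<|x|<\mu_k\}}|\varphi_k|^2\lesssim\int_{\rho_0}^1(1-t^{2m})^{-1/2}\,dt<C_0/100$. This is a fixed small constant, not $o(1)$, but that is all you need to conclude $|E\cap[-\rho_0\mu_k,\rho_0\mu_k]|\gtrsim\mu_k$. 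With this one adjustment your argument coincides with the paper's.
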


\noindent
{\bf Proof of Theorem \ref{thm-ob-for-HO}.}
The equivalence of $(i)$ and $(ii)$ in  Theorem \ref{thm-ob-for-HO} follows from $(i)$ of Theorem \ref{thm-4-suff} and Theorem \ref{thm-4-ne}.  The equivalence of $(i)$ and $(iii)$ in  Theorem \ref{thm-ob-for-HO} follows from $(ii)$ of Theorem \ref{thm-4-suff} and Theorem \ref{thm-4-ne}.
This ends the proof of Theorem \ref{thm-ob-for-HO}. \qed

The rest of this section is organized as follows: Subsection 4.1 provides  some preliminaries;
Subsections 4.2 and 4.3 prove  Theorem \ref{thm-4-suff} and Theorem \ref{thm-4-ne}, respectively;
 Subsection 4.4 presents the difference between
\emph{thick sets} and \emph{weakly thick sets}.

\subsection{Preliminaries}

Arbitrarily fix $m\in \mathbb{N}^+$. Let   $H$ be  given by \eqref{equ-1.0-1130}.
  {\it Recall that $\{\lambda_k\}_{k=1}^\infty$ and $\{\varphi_k\}_{k=1}^\infty$ are the eigenvalues and the corresponding $L^2$-normalized eigenfunctions of the above $H$. We also recall the property \eqref{3.4-10-15}
  for $\{\lambda_k\}_{k=1}^\infty$.}

To show that the weakly thick condition \eqref{def-1-3} is sufficient for \emph{observable sets at some time}
for the equation \eqref{equ-1.0}, we first use  Theorem \ref{thm-ob-for-genral} (where $V(x)=x^{2m}$)
to get the sufficient condition \eqref{equ-930-4} on  \emph{observable sets at some time}
for the equation \eqref{equ-1.0}. (Notice that the  potential $V(x)=x^{2m}$ satisfies \textbf{Condition (H)}.)  Next, we will find connections between \eqref{equ-930-4} and the weakly thick condition \eqref{def-1-3}, through using the property: each eigenfunction is either even or odd.

To prove that the weakly thick condition \eqref{def-1-3} is  necessary for \emph{observable sets at some time}
for the equation \eqref{equ-1.0}, we shall use the following  explicit asymptotic expression of
eigenvalues:  (See e.g. \cite{EGS,S}.)
 \begin{align}\label{equ4.34}
\lambda_k=\left(\frac{\pi}{B(3/2, {1}/{(2m)})}\cdot k\right)^{\frac{2m}{m+1}}(1+r_k), \;\;
k\in \mathbb{N}^+;
\;\;\mbox{and}\;\;\lim_{k\rightarrow +\infty}r_k=0,
\end{align}
where $B(\cdot, \cdot)$ is the Beta function.
From \eqref{equ4.34}, we can directly verify the next Lemma \ref{lemma41.,10-23}:
\begin{lemma}\label{lemma41.,10-23}
The eigenvalues $\{\lambda_k\}_{k=1}^\infty$  satisfy \eqref{equ-929-11}.
\end{lemma}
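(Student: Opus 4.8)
The plan is to read off \eqref{equ-929-11} from the asymptotics \eqref{equ4.34} combined with the strict monotonicity in \eqref{3.4-10-15}. First I would reduce to the tail of the sequence: since $\lambda_1<\lambda_2<\cdots\to+\infty$, each of the finitely many gaps $\lambda_{k+1}-\lambda_k$ with $1\le k\le N$ (for any fixed $N$) is a fixed positive number, so it is enough to exhibit $\delta>0$ and $N\in\mathbb{N}^+$ with $\lambda_{k+1}-\lambda_k\ge\delta$ for all $k\ge N$; then $\varepsilon_0:=\min\bigl\{\delta,\ \min_{1\le k\le N}(\lambda_{k+1}-\lambda_k)\bigr\}>0$ verifies \eqref{equ-929-11}.

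For the tail I would set $\mu:=\tfrac{2m}{m+1}$ and $c_m:=\tfrac{\pi}{B(3/2,1/(2m))}>0$, so that \eqref{equ4.34} reads $\lambda_k=(c_mk)^{\mu}(1+r_k)$ with $r_k\to0$, and note $\mu\in[1,2)$, with $\mu=1$ exactly when $m=1$. Writing
\[
\lambda_{k+1}-\lambda_k=c_m^{\mu}\bigl((k+1)^{\mu}-k^{\mu}\bigr)+c_m^{\mu}\bigl((k+1)^{\mu}r_{k+1}-k^{\mu}r_k\bigr),
\]
the mean value theorem gives $(k+1)^{\mu}-k^{\mu}=\mu\,\xi_k^{\mu-1}$ with $\xi_k\in(k,k+1)$. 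For $m\ge2$ one has $\mu>1$, hence the first term is $\ge c_m^{\mu}\mu\,k^{\mu-1}\to+\infty$, while the error increment is of strictly lower order by the quantitative remainder behind \eqref{equ4.34} (the Bohr--Sommerfeld/WKB analysis in \cite{EGS,S}); thus $\lambda_{k+1}-\lambda_k\to+\infty$, which gives both the desired tail bound and the stronger \eqref{equ-929-11-01}. For $m=1$ the first term is exactly $c_1=2$, and the cleanest route is simply to use the closed-form harmonic-oscillator spectrum $\lambda_k=2k-1$ (see \eqref{equ1.3}), which gives $\lambda_{k+1}-\lambda_k\equiv2$ at once.

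The only delicate point is the error increment $(k+1)^{\mu}r_{k+1}-k^{\mu}r_k$: the assertion $r_k\to0$ by itself does not control the \emph{differences} $\lambda_{k+1}-\lambda_k$, so the argument must lean on the rate of convergence implicit in the derivation of \eqref{equ4.34} (for $m\ge2$) or on the exact spectrum (for $m=1$). Granting that, the remainder is bookkeeping; I would therefore present the cases $m\ge2$ and $m=1$ separately and then conclude through the reduction of the first paragraph.
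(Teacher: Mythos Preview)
You have correctly put your finger on the real issue: the bare statement $r_k\to 0$ in \eqref{equ4.34} does \emph{not} control the increment $(k+1)^{\mu}r_{k+1}-k^{\mu}r_k$, and hence does not by itself yield a uniform lower bound on $\lambda_{k+1}-\lambda_k$ for $m\ge 2$. (One can construct sequences $r_k\to 0$ with $\lambda_k$ strictly increasing and yet $\liminf_k(\lambda_{k+1}-\lambda_k)=0$.) Your proposed fix---going back into \cite{EGS,S} to extract a rate on $r_k$---would work in principle but is more than is needed here, and the paper does not take this route.

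The paper's actual argument is the one flagged in the parenthetical remark immediately after the lemma: since $V(x)=x^{2m}$ satisfies \textbf{Condition (H)} with $c=m$, \cite[Lemma~3.3]{Ya} gives directly
\[
\lambda_{k+1}-\lambda_k\ \ge\ C\,\lambda_k^{\frac12-\frac{1}{2m}}\quad\text{for all }k\ge k_0,
\]
which is already the desired tail bound (and, for $m\ge 2$, the stronger \eqref{equ-929-11-01}); this is precisely \eqref{equ-930-2}. Combined with the strict monotonicity in \eqref{3.4-10-15} for the finitely many small $k$---exactly as in your first paragraph---one gets \eqref{equ-929-11}. So your reduction step is correct and matches the paper, but for the tail you should cite \eqref{equ-930-2}/\cite[Lemma~3.3]{Ya} rather than attempt to squeeze it out of \eqref{equ4.34}. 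Your treatment of the case $m=1$ via the explicit spectrum \eqref{equ1.3} is of course fine.
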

(It deserves mentioning that  since  \textbf{Condition (H)} holds for $V(x)=x^{2m}$, Lemma \ref{lemma41.,10-23} has been proved in the proof of Theorem \ref{thm-ob-for-genral} (see \eqref{equ-930-2}).

The next two lemmas will be used in the proof
that the weakly thick condition \eqref{def-1-3} is necessary for \emph{observable sets at some time}
for the equation \eqref{equ-1.0} in Subsection \ref{sec4.3-11-2}.
\begin{lemma}\label{lemma4.1-4-17}
Let $E\subset \mathbb{R}$ be a measurable subset.
Then for each $a>0$,
 \begin{align}\label{equ-101-1}
\varliminf_{\mathbb{N}\ni k\rightarrow  \infty} \frac{|E\bigcap[-a\lambda_k^{\frac{1}{2m}},a\lambda_k^{\frac{1}{2m}}]|}{a\lambda_k^{\frac{1}{2m}}} = \varliminf_{\mathbb{N}\ni k\rightarrow  \infty} \frac{|E\bigcap[-ab k^{\frac{1}{m+1}},abk^{\frac{1}{m+1}}]|}{abk^{\frac{1}{m+1}}},
\end{align}
where
\begin{align}\label{4.4-10-17}
 b=\left(\frac{\pi}{B(3/2, {1}/{2m})}\right)^{\frac{1}{m+1}}.
\end{align}
 \end{lemma}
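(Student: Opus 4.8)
The plan is to exploit the fact that $\lambda_k^{1/(2m)}$ and $bk^{1/(m+1)}$ are asymptotically proportional, and that replacing the window of a $\varliminf$ of the type $|E\cap[-x,x]|/x$ by a comparable window only perturbs the ratio by a vanishing amount. Concretely, from the asymptotic formula \eqref{equ4.34} we have
\begin{align}\label{eq-plan-1}
\lambda_k^{\frac{1}{2m}}=b\,k^{\frac{1}{m+1}}(1+r_k)^{\frac{1}{2m}},\qquad \lim_{k\to\infty}r_k=0,
\end{align}
so that $\lambda_k^{1/(2m)}=bk^{1/(m+1)}(1+s_k)$ with $s_k:=(1+r_k)^{1/(2m)}-1\to 0$. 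Thus for fixed $a>0$ the two windows $[-a\lambda_k^{1/(2m)},a\lambda_k^{1/(2m)}]$ and $[-abk^{1/(m+1)},abk^{1/(m+1)}]$ have half-lengths that agree up to the multiplicative factor $1+s_k\to 1$.

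Next I would isolate the elementary monotonicity/comparison lemma: if $0<\rho\le\rho'$ then, writing $f(\rho):=|E\cap[-\rho,\rho]|$, one has $f(\rho)\le f(\rho')\le f(\rho)+2(\rho'-\rho)$, hence
\begin{align}\label{eq-plan-2}
\frac{\rho}{\rho'}\cdot\frac{f(\rho)}{\rho}\;\le\;\frac{f(\rho')}{\rho'}\;\le\;\frac{f(\rho)}{\rho}+2\Big(1-\frac{\rho}{\rho'}\Big).
\end{align}
Applying this with $\{\rho,\rho'\}=\{a\lambda_k^{1/(2m)},\,abk^{1/(m+1)}\}$ (in whichever order makes $\rho\le\rho'$), the factor $\rho/\rho'$ is $1+s_k$ or $(1+s_k)^{-1}$, which tends to $1$, and $1-\rho/\rho'\to 0$. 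Taking $\varliminf$ as $k\to\infty$ on both sides of \eqref{eq-plan-2} and using that $\varliminf$ is stable under multiplication by a sequence tending to $1$ and under addition of a sequence tending to $0$, we conclude that the two $\varliminf$'s in \eqref{equ-101-1} coincide. The quantity $b$ in \eqref{4.4-10-17} is exactly the constant $\pi/B(3/2,1/(2m))$ raised to the power $1/(m+1)$ appearing in \eqref{equ4.34}, so the identification is just bookkeeping.

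There is no serious obstacle here; the only point requiring a little care is the direction of the inequality in \eqref{eq-plan-2}, since the sign of $s_k$ (equivalently whether $\lambda_k^{1/(2m)}$ exceeds or is exceeded by $bk^{1/(m+1)}$) need not be constant in $k$. The clean way around this is to note that for every $\eta\in(0,1)$ there is $k_\eta$ with $1-\eta\le 1+s_k\le 1+\eta$ for all $k\ge k_\eta$, so that for such $k$ both windows are sandwiched between $[-ab(1-\eta)k^{1/(m+1)},\,\cdot\,]$ and $[-ab(1+\eta)k^{1/(m+1)},\,\cdot\,]$; applying \eqref{eq-plan-2} to these fixed-ratio dilations and then letting $\eta\downarrow 0$ gives the equality without ever tracking the sign of $s_k$. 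This is the step I would write out most carefully; everything else is immediate from \eqref{equ4.34}.
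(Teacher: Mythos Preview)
Your proposal is correct and follows essentially the same approach as the paper: both use the asymptotic formula \eqref{equ4.34} to sandwich $a\lambda_k^{1/(2m)}$ between $(1\pm\text{small})\cdot abk^{1/(m+1)}$, then use monotonicity of $\rho\mapsto |E\cap[-\rho,\rho]|$ together with the trivial bound $|E\cap[-\rho',\rho']|\le |E\cap[-\rho,\rho]|+2(\rho'-\rho)$ to pass to the $\varliminf$. The paper does this directly via the set inclusions \eqref{equ-2-22-2} and \eqref{equ-2-22-5} and splits off the excess explicitly in \eqref{equ-2-22-3}, whereas you abstract the same inequalities into your comparison lemma \eqref{eq-plan-2} and then handle the sign ambiguity of $s_k$ with an $\eta$-sandwich; these are the same argument in slightly different packaging.
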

\begin{proof}
First of all, by \eqref{equ4.34}, we have
\begin{align}\label{equ-2-22-1}
 bk^{\frac{1}{m+1}}(1-|r_k|)^{\frac{1}{2m}}\leq \lambda_k^{\frac{1}{2m}} \leq  bk^{\frac{1}{m+1}}(1+|r_k|)^{\frac{1}{2m}}\;\;\mbox{for all}\;\; k\in \mathbb{N}^+,
\end{align}
where $b$ is given by \eqref{4.4-10-17}.
 Arbitrarily fix $a>0$. Then by  \eqref{equ-2-22-1}, we find that for all $k\in \mathbb{N}^+$,
\begin{align}\label{equ-2-22-2}
&\left(E \cap [-a\lambda_k^{\frac{1}{2m}},a\lambda_k^{\frac{1}{2m}} ]\right) \subset \left(E\cap[-abk^{\frac{1}{m+1}}(1+|r_k|)^{\frac{1}{2m}},abk^{\frac{1}{m+1}}(1+|r_k|)^{\frac{1}{2m}} ]\right).
%\\&\subset \Big( E\cap[-abk^{\frac{1}{m+1}}(1+|r_k|)^{\frac{1}{2m}},abk^{\frac{1}{m+1}}(1+|r_k|)^{\frac{1}{2m}} ]\Big) \cup [-abk^{\frac{1}{m+1}}(1+|r_k|)^{\frac{1}{2m}}, abk^{\frac{1}{m+1}}]
\end{align}
From \eqref{equ-2-22-2}, we see
\begin{multline}\label{equ-2-22-3}
   \varliminf_{\mathbb{N}\ni k\rightarrow  \infty}\limits \frac{|E\bigcap [-a\lambda_k^{\frac{1}{2m}}, \, a\lambda_k^{\frac{1}{2m}}]|}{a\lambda_k^{\frac{1}{2m}}}
\leq \varliminf_{\mathbb{N}\ni k\rightarrow  \infty}\limits \frac{|E\bigcap [-abk^{\frac{1}{m+1}}(1+|r_k|)^{\frac{1}{2m}}, \, abk^{\frac{1}{m+1}}(1+|r_k|)^{\frac{1}{2m}}]|}{abk^{\frac{1}{m+1}}(1-|r_k|)^{\frac{1}{2m}}}  \\
  \leq \varliminf_{\mathbb{N}\ni k\rightarrow  \infty}\limits \frac{|E\bigcap [-abk^{\frac{1}{m+1}}, abk^{\frac{1}{m+1}}]|}{abk^{\frac{1}{m+1}}(1-|r_k|)^{\frac{1}{2m}}} + \varliminf_{\mathbb{N}\ni k\rightarrow  \infty}\limits \frac{  2\big((1+|r_k|)^{\frac{1}{2m}}-1\big) }{(1-|r_k|)^{\frac{1}{2m}}}.
\end{multline}
Since $\lim_{\mathbb{N}\ni k\rightarrow  \infty}r_k=0$ (see \eqref{equ4.34}), the second term on the right hand side of \eqref{equ-2-22-3}  vanishes. So we obtain from \eqref{equ-2-22-3} that
\begin{align}\label{equ-2-22-4}
\varliminf_{\mathbb{N}\ni k\rightarrow  \infty}\limits \frac{|E\bigcap [-a\lambda_k^{\frac{1}{2m}}, \, a\lambda_k^{\frac{1}{2m}}]|}{a\lambda_k^{\frac{1}{2m}}} \leq \varliminf_{\mathbb{N}\ni k\rightarrow  \infty}\limits \frac{|E\bigcap [-abk^{\frac{1}{m+1}}, abk^{\frac{1}{m+1}}]|}{abk^{\frac{1}{m+1}}}.
\end{align}
On the other hand, it follows from \eqref{equ-2-22-1} that when $k$ is large enough so that $|r_k|<1$,
\begin{align}\label{equ-2-22-5}
&\left(E \cap [-a\lambda_k^{\frac{1}{2m}},a\lambda_k^{\frac{1}{2m}} ]\right) \supset \left(E\cap[-abk^{\frac{1}{m+1}}(1-|r_k|)^{\frac{1}{2m}},abk^{\frac{1}{m+1}}(1-|r_k|)^{\frac{1}{2m}} ]\right).
\end{align}
Similar to \eqref{equ-2-22-4}, one can deduce from \eqref{equ-2-22-5} that
\begin{align}\label{equ-2-22-6}
\varliminf_{\mathbb{N}\ni k\rightarrow  \infty}\limits \frac{|E\bigcap [-a\lambda_k^{\frac{1}{2m}}, \, a\lambda_k^{\frac{1}{2m}}]|}{a\lambda_k^{\frac{1}{2m}}} \geq \varliminf_{\mathbb{N}\ni k\rightarrow  \infty}\limits \frac{|E\bigcap [-abk^{\frac{1}{m+1}}, abk^{\frac{1}{m+1}}]|}{abk^{\frac{1}{m+1}}}.
\end{align}
Finally, \eqref{equ-101-1} follows from  \eqref{equ-2-22-4} and \eqref{equ-2-22-6} at once.
This ends the proof of Lemma \ref{lemma4.1-4-17}.
\end{proof}

The next Proposition \ref{prop-915} gives different equivalent versions  of the characterization of \emph{ weakly thick sets}, in  particular, it  shows the connection between \eqref{equ-101-1} and \eqref{def-1-3}.

\begin{proposition}\label{prop-915}
Let $E\subset \mathbb{R}$ be a measurable set. Then the following statements are equivalent:

\noindent $(i)$ The set $E$ is weakly thick, i.e.,  $\varliminf_{x \rightarrow +\infty}\limits \frac{|E\bigcap [-x,x]|}{x} >0$.

\noindent $(ii)$ The set $E$ satisfies that $\varliminf_{\mathbb{N}\ni k \rightarrow \infty}\limits \frac{|E\bigcap [-k,k]|}{k} >0$.

\noindent $(iii)$ For all $a>0$ and $l>0$,  $\varliminf_{\mathbb{N}\ni k \rightarrow  \infty}\limits \frac{|E\bigcap [-ak^l,ak^l]|}{ak^l} >0$.

\noindent $(iv)$ There is $a>0$ and $l>0$ so that  $\varliminf_{\mathbb{N}\ni k \rightarrow  \infty}\limits \frac{|E\bigcap [-ak^l,ak^l]|}{ak^l} >0$.
\end{proposition}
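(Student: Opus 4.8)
The plan is to prove the cycle of implications $(i)\Rightarrow(iii)\Rightarrow(ii)\Rightarrow(iv)\Rightarrow(i)$, which links all four statements. Two of these steps are immediate: $(ii)$ is nothing but the special case $a=l=1$ of $(iii)$, and $(ii)$ is literally an instance of the existential assertion $(iv)$ (again with $a=l=1$), so $(iii)\Rightarrow(ii)$ and $(ii)\Rightarrow(iv)$ require no argument. The conceptual content is therefore concentrated in the two ``sampling'' steps $(i)\Rightarrow(iii)$ and $(iv)\Rightarrow(i)$, both of which use only the elementary fact that $x\mapsto|E\cap[-x,x]|$ is nondecreasing.

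For $(i)\Rightarrow(iii)$ I would argue as follows: write $c:=\varliminf_{x\to+\infty}|E\cap[-x,x]|/x$; by hypothesis $c>0$ (and in fact $c\le 2$). Choose $x_0>0$ with $|E\cap[-x,x]|\ge (c/2)x$ for all $x\ge x_0$. Given any $a>0$ and $l>0$, one has $ak^l\ge x_0$ for all $k$ large, hence $|E\cap[-ak^l,ak^l]|/(ak^l)\ge c/2$ for such $k$, so $\varliminf_{\mathbb{N}\ni k\to\infty}|E\cap[-ak^l,ak^l]|/(ak^l)\ge c/2>0$, which is $(iii)$.

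The substantive step is $(iv)\Rightarrow(i)$. Fix $a,l>0$ with $\delta:=\varliminf_{\mathbb{N}\ni k\to\infty}|E\cap[-ak^l,ak^l]|/(ak^l)>0$, and pick $K\in\mathbb{N}^+$ with $|E\cap[-ak^l,ak^l]|\ge(\delta/2)\,ak^l$ whenever $k\ge K$. The idea is to interpolate between the discrete radii $ak^l$: given $x$ large, set $k:=\lfloor (x/a)^{1/l}\rfloor$, so that $ak^l\le x<a(k+1)^l$, with $k\to\infty$ as $x\to+\infty$ and $k\ge K$ once $x$ is large enough. Monotonicity of $x\mapsto|E\cap[-x,x]|$ then gives
\[
\frac{|E\cap[-x,x]|}{x}\;\ge\;\frac{|E\cap[-ak^l,ak^l]|}{a(k+1)^l}\;\ge\;\frac{\delta}{2}\Bigl(\frac{k}{k+1}\Bigr)^{l},
\]
and letting $x\to+\infty$ (hence $k\to\infty$) yields $\varliminf_{x\to+\infty}|E\cap[-x,x]|/x\ge\delta/2>0$, i.e.\ $(i)$.

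The only point needing a moment's care is whether this discrete-to-continuous passage in $(iv)\Rightarrow(i)$ survives when $l>1$, in which case the gaps $a(k+1)^l-ak^l$ between consecutive sampling radii grow without bound; this is exactly what the correction factor $(k/(k+1))^{l}\to 1$ absorbs. Beyond that bookkeeping I anticipate no real obstacle: all four conditions express the same thing, namely that $E$ occupies a positive proportion of $[-x,x]$ for large $x$, measured along families of scales that are mutually cofinal.
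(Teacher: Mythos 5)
Your proof is correct. The substantive step $(iv)\Rightarrow(i)$ is the same as the paper's: bracket $x$ between consecutive sampling radii $ak^l\le x<a(k+1)^l$, use monotonicity of $x\mapsto|E\cap[-x,x]|$, and absorb the growing gaps into the factor $(k/(k+1))^l\to1$. Where you differ is in the arrangement of the cycle. The paper takes $(i)\Rightarrow(ii)$ and $(iii)\Rightarrow(iv)$ as the trivial links and therefore must prove \emph{two} nontrivial implications, $(ii)\Rightarrow(iii)$ and $(iv)\Rightarrow(i)$, each requiring its own bracketing argument (the first interpolates the radii $ak^l$ between consecutive integers $n\le ak^l<n+1$). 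You instead take $(iii)\Rightarrow(ii)$ and $(ii)\Rightarrow(iv)$ as the trivial links, which reduces the work to $(i)\Rightarrow(iii)$ — immediate, since passing from a continuous $\varliminf$ to a $\varliminf$ along the subsequence $x=ak^l\to\infty$ can only increase it — plus the single bracketing argument $(iv)\Rightarrow(i)$. So your route is slightly leaner: one interpolation instead of two, at no loss of content. Your handling of the one delicate point (the unbounded gaps $a(k+1)^l-ak^l$ when $l>1$) is exactly right.
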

\begin{proof}
It is clear that $(i)\Longrightarrow (ii)$ and $(iii)\Longrightarrow (iv)$. The rest of the proof is organized by two steps.

\noindent {\it Step 1. We show that  $(ii)\Longrightarrow (iii)$.}

By $(ii)$, there is  $\gamma>0$ and $k_0\in\mathbb{N}^+$ so that
\begin{align}\label{equ-916-1}
\frac{|E\cap [-k,k]|}{k}\geq \gamma\;\;\mbox{ for all }\;\;  k\geq k_0.
\end{align}
Let $a>0$ and $l>0$. Arbitrarily fix $k\in \mathbb{N}^+$ so that
\begin{align}\label{equ-916-2}
k\geq c_1 := \left( \frac{k_0+1}{a} \right)^{1/l}.
\end{align}
Then  there exists a unique  $n\in\mathbb{N}$ so that
\begin{align}\label{equ-916-3}
n\leq ak^l<n+1.
\end{align}
From  \eqref{equ-916-2}, we have $n>k_0$. Meanwhile, from \eqref{equ-916-3}, we obtain
$$
E\cap [-ak^l,ak^l]\supset  E\cap [-n, n].
$$
Thus, by \eqref{equ-916-1}, we find that for any $k\in \mathbb{N}^+\cap [c_1,+\infty)$,
\begin{align*}
|E \cap [-ak^l,ak^l]|\geq |E\cap [-n, n]|\geq  \gamma n.
\end{align*}
This, along with  \eqref{equ-916-3}, yields that
  $$
\varliminf_{\mathbb{N}\ni k \rightarrow \infty} \frac{|E \cap [-ak^l,ak^l]|}{ak^l} \geq \varliminf_{\mathbb{N}\ni k \rightarrow \infty} \frac{\gamma(ak^l-1)}{ak^l} =\gamma>0,
  $$
 which leads to  $(iii)$.

  \noindent {\it Step 2. We show that $(iv)\Longrightarrow (i)$.}

  By $(iv)$, there is $k_0\in \mathbb{N}^+$ and $\gamma>0$ so that
  \begin{align}\label{equ-916-6}
\frac{|E\bigcap [-ak^l,ak^l]|}{ak^{l}}\geq \gamma\;\; \mbox{ for all } \; k\geq k_0.
\end{align}
Arbitrarily fix $x\in \mathbb{R}$ so that
  \begin{align}\label{equ-916-7}
x\geq c_2:=  a(k_0+1)^l.
\end{align}
  Then, there exists a unique  $n\in \mathbb{N}$ so that
\begin{align}\label{equ-916-8}
 an^l\leq x< a(n+1)^l.
\end{align}
The fact \eqref{equ-916-7}, together with \eqref{equ-916-8}, implies that $n>k_0$.
Meanwhile, from \eqref{equ-916-8}, we also have
$$
E\cap[-x, x]\supset E\cap [-an^l,an^l],
$$
which, along with \eqref{equ-916-6}, leads to
\begin{align}\label{equ-916-10}
|E\cap[-x, x]|\geq | E\cap [-an^l,an^l]| \geq   \gamma an^l.
\end{align}
Now, from   \eqref{equ-916-10} and \eqref{equ-916-8}, we find that for all $x\in [c_2,+\infty)$,
\begin{eqnarray}\label{equ-916-11}
|E\cap[-x, x]|   \geq \gamma(\frac{n}{n+1})^l x\geq \gamma 2^{-l} x.
\end{eqnarray}
It follows from \eqref{equ-916-11} that
   $$
\varliminf_{x \rightarrow +\infty} \frac{|E\cap [-x,x]|}{x} \geq \gamma 2^{-l}>0,
  $$
which leads to $(i)$.

Hence, we  complete the proof of Proposition \ref{prop-915}.
\end{proof}

\subsection{Proof  of Theorem \ref{thm-4-suff}}\label{subsect4.2}

Arbitrarily fix \emph{a weakly thick set} $E\subset \mathbb{R}$. Then we have
\begin{align}\label{equ-2-23-5}
\varliminf_{x \rightarrow +\infty}\limits \frac{|E\bigcap [-x,x]|}{x} >0.
\end{align}
Arbitrarily fix $m\in \mathbb{N}^+$.
First of all,
we  claim that the eigenfunctions $\{\varphi_k\}_{k\in \mathbb{N}^+}$ (to the operator $H=-\partial_x^2+x^{2m}$) satisfy
\begin{align}\label{equ-2-24-1}
\int_{E}|\varphi_k(x)|^2\d x \geq C\;\;\mbox{for all}\; k\in \mathbb{N}^+,
\end{align}
where $C>0$ is independent of $k$.
To this end, we introduce the following sets:
\begin{align}\label{equ-2-23-6}
E_+ := E\cap [0,\infty);\quad  E_-:= E\cap(-\infty,0);\quad E^*_-:= \{-x,\,x\in E_-\}.
\end{align}
It is clear that  $E=E_+\bigcup E_-$ and $E_+\bigcap E_-=\emptyset$ and that
$$
E\cap[-x,x] = \Big(E_+ \cap [0,x]\Big) \bigcup \Big(E_- \cap [-x,0)\Big)\;\;\mbox{for each}\;\;x>0,
$$
which, together with \eqref{equ-2-23-6}, implies that
\begin{align}\label{equ-2-23-6.5}
|E\cap[-x,x]| &= |E_+ \cap [0,x]| +|E_- \cap [-x,0)|= |E_+ \cap [0,x]| +|E^*_- \cap [0,x]|\nonumber\\
&\leq 2\left|\Big(E_+\cup E^*_-\Big) \cap [0,x]\right|=2\left|\tilde{E} \cap [0,x]\right|,
\end{align}
where
\begin{eqnarray}\label{equ-2-23-7}
\tilde{E}:= E_+\cup E^*_-\subset [0,\,\infty).
\end{eqnarray}
Now it follows from \eqref{equ-2-23-5} and \eqref{equ-2-23-6.5} that
\begin{align}\label{equ-2-23-8}
\varliminf_{x \rightarrow +\infty}\limits \frac{|\tilde{E}\bigcap [0,x]|}{x} >0.
\end{align}
Because of \eqref{equ-2-23-8}, we can apply Theorem \ref{thm-ob-for-genral} to conclude that $\tilde{E}$ is \emph{an observable set at some time} for \eqref{equ-1.0}. Further, according to Proposition \ref{pro-ob-general},  there is   $C>0$ (independent of $k$) so that
\begin{align}\label{equ-10-5-1706}
\int_{\tilde{E}}|\varphi_k(x)|^2\d x \geq C\;\;\mbox{for all}\; k\in \mathbb{N}^+.
\end{align}

To proceed, we  need the following  {\bf Key Observation}: {\it Each  eigenfunction of $H$ is either  even or odd. }
Indeed, we have
\begin{align}\label{equ-10-5}
\Big(-\frac{d^2}{dx^2}+x^{2m}\Big)\varphi_k(x) = \lambda_k \varphi_k(x), \qquad  x\in \mathbb{R}.
\end{align}
Let $\widetilde{\varphi_k}(x):= \varphi_k(-x)$, $x\in \mathbb{R}$. One can easily check that $\widetilde{\varphi_k}$ also satisfies \eqref{equ-10-5} and that $\|\widetilde{\varphi_k}\|_{L^2(\mathbb{R})}=\|\varphi_k\|_{L^2(\mathbb{R})}$. These, along with
the conclusion $(ii)$  Proposition \ref{prop-H}, give immediately that either
$\varphi_k=\widetilde{\varphi_k}$ or $\varphi=-\widetilde{\varphi_k}$, which leads to {\bf Key Observation}.

By {\bf Key Observation}  and \eqref{equ-2-23-6}, we infer that for all $k\in \mathbb{N}^+$,
\begin{align}\label{equ-10-5-1702}
\int_{E_-}|\varphi_k(x)|^2\d x=\int_{E^*_-}|\varphi_k(x)|^2\d x.
\end{align}
It follows from \eqref{equ-2-23-6}, \eqref{equ-10-5-1702} and \eqref{equ-2-23-7} that
\begin{align}\label{equ-10-5-1704}
\int_{E}|\varphi_k(x)|^2\d x=\int_{E_+}|\varphi_k(x)|^2\d x+\int_{E_-}|\varphi_k(x)|^2\d x\ge \int_{\tilde{E}}|\varphi_k(x)|^2\d x.
\end{align}
Combining \eqref{equ-10-5-1706} and \eqref{equ-10-5-1704}, we find that
$$
\int_{E}|\varphi_k(x)|^2\d x \geq C\;\;\mbox{for all}\; k\in \mathbb{N}^+,
$$
which leads to  \eqref{equ-2-24-1}.

Now we prove $(i)$ of Theorem \ref{thm-4-suff}.
Indeed, by  \eqref{equ-2-24-1} and Lemma \ref{lemma41.,10-23}, we can use  Proposition \ref{pro-ob-general}   to conclude that $E$ is \emph{an observable set at some time} for \eqref{equ-1.0} with  $m \in \mathbb{N}^+$, which leads to $(i)$ clearly.

Next we prove $(ii)$ of Theorem \ref{thm-4-suff}.
Arbitrarily fix $m\geq 2$. By
  \eqref{equ4.34}, after some  direct calculation, we can find $C>0$ (independent of $k$) so that
$$
\lambda_{k+1}-\lambda_{k}  \geq ~ Ck^{\frac{m-1}{m+1}}\rightarrow \infty,\,\,\,\,\text{as}\,\,\,k\rightarrow\infty.
$$
Then because of \eqref{equ-2-24-1} and Lemma \ref{lemma41.,10-23}, we apply the last statement in Proposition \ref{pro-ob-general}  to conclude that $E$ is \emph{an observable set at any time} for \eqref{equ-1.0}. This completes the proof of Theorem \ref{thm-4-suff}. \qed

\subsection{Proof of  Theorem \ref{thm-4-ne}}\label{sec4.3-11-2}
 Let $E$ be \emph{an observable set at some time $T_0>0$} for \eqref{equ-1.0}, with an arbitrarily fixed $m\in \mathbb{N}^+$ . Then there is  $C_0=C_0(T_0,E)>0$ so that
\begin{align}\label{equ-10-5-1709}
\|u_0\|_{L^2}^2\leq C_0\int_0^{T_0}\int_{E}{|e^{-\i tH}u_0|^2\d x\d t}\;\;\mbox{for all}\;\;u_0\in L^2(\mathbb{R}).
\end{align}
By taking $u_0=\varphi_k$ in \eqref{equ-10-5-1709} and noting that $\varphi_k$ is
the $L^2$ normalized eigenfunction of $H$, we find
\begin{align}\label{equ-10-5-1710}
\int_{E}{|\varphi_k|^2 \d x}\ge C_1\;\;\mbox{ for all }\;\; k\in \mathbb{N}^+,
\end{align}
where $C_1=1/(T_0C_0)$. (Notice that \eqref{equ-10-5-1710} can also be obtained by Proposition \ref{pro-ob-general}.)

In order to show that $E$ is \emph{weakly thick}  from the uniform inequality \eqref{equ-10-5-1710}, the asymptotic expression \eqref{3.21,11-6}
  for general potentials with \textbf{Condition (H)} doesn't seem  to be enough.
  We need  a finer asymptotic expression of $\varphi_k$ for the case
  \eqref{equ-1.0-1130}. This will be given
   by the next Lemma \ref{lem-append}. To state it, we write
  \begin{eqnarray}\label{4.17,10-23}
  \mu_k:=  \lambda_k^{\frac{1}{2m}} \;\;\mbox{for each}\;\; k>>1\; (\mbox{so that}\;\;\lambda_k>0).
  \end{eqnarray}
  For each $x\in \mathbb{R}$ and each $k>>1$, we define
  \begin{align}\label{eq-app-5}
S_k^-(x):= \int_0^x\sqrt{|\mu_k^{2m}-t^{2m}|}\,\d t\;\;\mbox{and}\;\;S_k^+(x):= \int_{\mu_k}^{|x|}\sqrt{|t^{2m}-\mu_k^{2m}|}\,\d t.
\end{align}
 We also notice that $\varphi_k$ satisfies
\begin{equation}\label{eq-app-1}
\begin{cases}
-\varphi_k''(x)+x^{2m}\varphi_k(x)=\mu_k^{2m} \varphi_k(x),\;\;x\in \mathbb{R};\\
\|\varphi_k\|_{L^2(\mathbb{R})}=1.
\end{cases}
\end{equation}
 The next Lemma \ref{lem-append} is the key in our  proof.
\begin{lemma}\label{lem-append}
With  notations in \eqref{4.17,10-23} and \eqref{eq-app-5}, when $k\rightarrow +\infty$,
either
\begin{equation}\label{eq-app-2}
\varphi_{k}(x)=\begin{cases}
a_{2k}^-(\mu_k^{2m}-|x|^{2m})^{-\frac14}(\cos{S_k^-(x)}+R_k(x)),\,\,\,|x|<\mu_k- \delta\cdot\mu_k^{-\frac{2m-1}{3}},\\
O(\mu_k^{\frac{m-2}{6}}),\,\,\,\,\,\,\,\,\,\,\,\quad\quad\quad\quad \quad\mu_k-\delta\cdot\mu_k^{-\frac{2m-1}{3}}\leq|x|\leq \mu_k+ \delta\cdot\mu_k^{-\frac{2m-1}{3}},\\
a_{2k}^+(|x|^{2m}-\mu_k^{2m})^{-\frac14}e^{-S_k^+(x)}(1+R_k(x)),\,\,\,|x|>\mu_k+ \delta\cdot\mu_k^{-\frac{2m-1}{3}},
\end{cases}
\end{equation}
or
\begin{equation}\label{eq-app-3}
\varphi_{k}(x)=\begin{cases}
a_{2k+1}^-(\mu_k^{2m}-|x|^{2m})^{-\frac14}(\sin{S_k^-(x)}+R_k(x)),\,\,\,|x|<\mu_k- \delta\cdot\mu_k^{-\frac{2m-1}{3}},\\
O(\mu_k^{\frac{m-2}{6}}),\,\,\,\,\,\,\,\,\,\,\,\quad\quad\quad\quad \quad\mu_k-\delta\cdot\mu_k^{-\frac{2m-1}{3}}\leq|x|\leq \mu_k+ \delta\cdot\mu_k^{-\frac{2m-1}{3}},\\
a_{2k+1}^+(|x|^{2m}-\mu_k^{2m})^{-\frac14}e^{-S_{k}^+(x)}(1+R_k(x)),\,\,\,|x|>\mu_k+ \delta\cdot\mu_k^{-\frac{2m-1}{3}},
\end{cases}
\end{equation}
where $\delta>0$ is independent of $k$ and $x$ and
\begin{align}\label{eq-app-4}
|a_k^{\pm}|\sim \mu_k^{\frac{m-1}{2}},\,\,\,\,R_k(x)=O\left(\left|x^{2m}-\mu_k^{2m}\right|^{-\frac12}\cdot ||x|-\mu_k|^{-1}\right).
\end{align}
\end{lemma}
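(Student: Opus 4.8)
The plan is to carry out a rigorous WKB (Liouville--Green) analysis of the ODE \eqref{eq-app-1}, rewritten as $\varphi_k''=(x^{2m}-\mu_k^{2m})\varphi_k$ with turning points at $x=\pm\mu_k$. Since each $\varphi_k$ is even or odd (the \textbf{Key Observation} of Subsection \ref{subsect4.2}), it suffices to work on $[0,\infty)$, where $\varphi_k$ is the $L^2$ solution satisfying $\varphi_k'(0)=0$ (even case) or $\varphi_k(0)=0$ (odd case). After the rescaling $x=\mu_k y$ the equation becomes $\psi''=\mu_k^{2m+2}(y^{2m}-1)\psi$, so $\mu_k^{m+1}$ is the large parameter and $y=1$ the relevant turning point. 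I would split $[0,\infty)$ into the classically allowed zone $\{\,|x|\le\mu_k-\delta\mu_k^{-(2m-1)/3}\,\}$, the Airy-type transition zone $\{\,|\,|x|-\mu_k\,|\le\delta\mu_k^{-(2m-1)/3}\,\}$, and the classically forbidden zone $\{\,|x|\ge\mu_k+\delta\mu_k^{-(2m-1)/3}\,\}$; the width $\mu_k^{-(2m-1)/3}$ is forced by the local approximation $x^{2m}-\mu_k^{2m}\approx 2m\mu_k^{2m-1}(x-\mu_k)$, whose Airy length scale is $(2m\mu_k^{2m-1})^{-1/3}$.

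In the allowed and forbidden zones I would use the Liouville--Green representation with rigorous error control as in \cite{Ti,Ya} (the machinery behind Lemma \ref{lem3.1}, here pushed up to the edge of the transition zone): the solutions are combinations of $(\,|\mu_k^{2m}-x^{2m}|\,)^{-1/4}e^{\pm\i S_k^-(x)}$ in the allowed zone and of $(\,|x^{2m}-\mu_k^{2m}|\,)^{-1/4}e^{\pm S_k^+(x)}$ in the forbidden zone, with relative error bounded by the error-control function, whose size near the turning point is $O\big(|x^{2m}-\mu_k^{2m}|^{-1/2}\,|\,|x|-\mu_k\,|^{-1}\big)$; this is exactly the stated bound on $R_k$ in \eqref{eq-app-4}. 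The $L^2$-membership of $\varphi_k$ forces the coefficient of the growing branch $e^{+S_k^+(x)}$ in the forbidden zone to be negligible, leaving the third line of \eqref{eq-app-2}--\eqref{eq-app-3}. In the allowed zone, the boundary condition at $x=0$, together with $S_k^-(0)=0$ and $\tfrac{d}{dx}(\mu_k^{2m}-x^{2m})^{-1/4}\big|_{x=0}=0$ (valid for $m\ge1$), forces a pure $\cos S_k^-$ in the even case and a pure $\sin S_k^-$ in the odd case, the orthogonal component being absorbed into $R_k$; this yields the dichotomy between \eqref{eq-app-2} and \eqref{eq-app-3}.

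To pin down $|a_k^\pm|$ I would use $\|\varphi_k\|_{L^2}=1$ and parity: the forbidden zone contributes exponentially little, the transition zone contributes $O\big(\mu_k^{(m-2)/3}\cdot\mu_k^{-(2m-1)/3}\big)=o(1)$, and in the allowed zone $\int_0^{\mu_k}(\mu_k^{2m}-x^{2m})^{-1/2}\,\d x=\mu_k^{1-m}\int_0^1(1-u^{2m})^{-1/2}\,\d u$ is a fixed constant times $\mu_k^{1-m}$, while $\cos^2$ (resp.\ $\sin^2$) averages to $\tfrac12$; hence $1\sim|a_k^-|^2\mu_k^{1-m}$, i.e.\ $|a_k^-|\sim\mu_k^{(m-1)/2}$, and the connection formula across the turning point gives $|a_k^+|\sim|a_k^-|\sim\mu_k^{(m-1)/2}$.

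The main obstacle is the transition zone and the matching. Here one substitutes $x=\mu_k+(2m\mu_k^{2m-1})^{-1/3}s$ and verifies that \eqref{eq-app-1} reduces, to leading order in $\mu_k$, to the Airy equation $\psi_{ss}=s\psi$; the $L^2$ constraint selects the recessive solution, whose classical asymptotics on the two sides furnish the connection formula relating $a_k^-$, the $\pm\pi/4$ phase shift and $a_k^+$ (and, incidentally, the Bohr--Sommerfeld condition for $\mu_k$, consistent with \eqref{equ4.34}, which is what makes the allowed-zone phase exactly $S_k^-(x)$ up to sign). For the size estimate: at the inner edge $(\mu_k^{2m}-x^{2m})^{-1/4}\sim\mu_k^{-(2m-1)/6}$, so $\varphi_k\sim a_k^-\mu_k^{-(2m-1)/6}\sim\mu_k^{(m-1)/2-(2m-1)/6}=\mu_k^{(m-2)/6}$, and the uniform bound $O(\mu_k^{(m-2)/6})$ on the whole transition zone follows from the boundedness of the Airy function together with explicit Airy error bounds. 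Gluing the three representations with their errors and translating back via $\mu_k=\lambda_k^{1/(2m)}$ gives \eqref{eq-app-2}--\eqref{eq-app-4}.
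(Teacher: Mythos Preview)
Your proposal is correct and follows essentially the same three-zone WKB scheme as the paper's proof in Appendix~\ref{sec-app}: a Liouville transform in the allowed region with Duhamel/Gronwall error control, an Airy-equation reduction in the transition zone of width $\sim\mu_k^{-(2m-1)/3}$, and a second Liouville transform in the forbidden region selecting the recessive branch. The only noteworthy difference is in pinning down $|a_k^-|$: you compute the full $L^2$-norm in the allowed zone via the $\cos^2$-averaging and the integral $\int_0^1(1-u^{2m})^{-1/2}\,\d u$, whereas the paper obtains the upper bound from Lemma~\ref{lem-930-C} and the lower bound from a virial identity (see \eqref{eq-app-15}--\eqref{eq-app-16}); likewise, you invoke the Airy connection formula while the paper matches by continuity at the edges of the transition zone---both routes are standard and equivalent here.
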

Here and in what follows, given sequences of numbers $\{\alpha_k\}$ and $\{\gamma_k\}$, by $\alpha_k\sim \gamma_k$, we mean that
there is $C_1>0$ and $C_2>0$ so that $C_1|\gamma_k|\leq |\alpha_k|\leq C_2|\gamma_k|$ for all $k$, while by $\alpha_k=O(\gamma_k)$,
we mean that
there is $C_3>0$ so that $|\alpha_k|\leq C_3|\gamma_k|$ for all $k$.

\begin{remark}\label{remark4.5,10-23}
One can use the standard WKB method (see e.g. in \cite{BS,Fe,Ti}) to obtain asymptotic expressions of the form $\varphi_k=f(x)e^{iS(x)}$ for certain amplitude $f$ and phase function $S$. The corresponding result for the case $m=1$ was stated in \cite[Lemma 5.1]{KT} without  proof. Since  Lemma \ref{lem-append} will play  an important role in our proof, we
will give its detailed proof  in the Appendix \ref{sec-app} for the sake of completeness of the paper.
\end{remark}

We now back to the proof of Theorem \ref{thm-4-ne}.  In order to apply  Lemma \ref{lem-append}, we make  the following decomposition:
\begin{eqnarray}\label{equ3.62}
\int_{E}|{\varphi_k|^2\,\d x}=I_1+I_2+I_3,
\end{eqnarray}
where
\begin{align*}
I_1&:= \int_{E\bigcap\Big\{x:|x|< \mu_k-\delta\mu_k^{-\frac{2m-1}{3}}\Big\}}|{\varphi_k(x)|^2\,\mathrm dx}{,}\\
I_2&:=\int_{E\bigcap\Big\{x:\mu_k-\delta\mu_k^{-\frac{2m-1}{3}}\leq|x|\leq \mu_k+\delta\mu_k^{-\frac{2m-1}{3}}\Big\}}|{\varphi_k(x)|^2\,\mathrm dx}{,}\\
I_3&:=\int_{E\bigcap\Big\{x:|x|>\mu_k+\delta\mu_k^{-\frac{2m-1}{3}}\Big\}}|{\varphi_k(x)|^2\,\mathrm dx}.
\end{align*}

To deal with the term $I_1$, we observe that by \eqref{eq-app-4}, there is
$C>0$ (independent of $k$ and $x$) so that
\begin{align}\label{eq-reminder-es}
|R_k(x)|\leq  C\left(\mu_k^{2m-1}\mu_k^{-\frac{2m-1}{3}}\right)^{-\frac12} \mu_k^\frac{2m-1}{3}\leq C,
\end{align}
when $x$ satisfies
\begin{eqnarray*}
\mbox{either}\;\;|x|<\mu_k-\delta\mu_k^{-\frac{2m-1}{3}}\;\;\mbox{or}
\;\;|x|>\mu_k+\delta\mu_k^{-\frac{2m-1}{3}}.
\end{eqnarray*}
Now, we write
\begin{align}\label{equ-10-5-1711}
I_1&=  \int_{{E\bigcap\Big\{x:|x|< \rho\mu_k\Big\}}}{|\varphi_k|^2\,\mathrm dx}+ \int_{{E\bigcap\Big\{x:\rho\mu_k<|x|< \mu_k-\delta\mu_k^{-\frac{2m-1}{3}}\Big\}}}{|\varphi_k|^2\,\mathrm dx}\nonumber\\
&:= I_{1,1}+I_{1,2},
\end{align}
where $\rho\in(0, 1)$ is some constant to be chosen later.  Notice that for any given $\rho\in(0, 1)$, we have
$$
\rho\mu_k<\mu_k-\delta\mu_k^{-\frac{2m-1}{3}},\,\,\,\text{as }\,\,k \to \infty.
$$
Thus we can use \eqref{eq-reminder-es}, as well as Lemma \ref{lem-append}, to find   $k_1\in\mathbb{N}^+$ so that when $k\geq k_1$,
\begin{align}\label{equ-10-5-1712}
I_{1,1}&\leq  C_2 \int_{E\bigcap\{x:|x|< \rho\mu_k\}}{\mu_k^{m-1}(\mu_k^{2m}-x^{2m})^{-\frac12}\,\mathrm dx}\nonumber\\
&\leq C_2(1-\rho^{2m})^{-\frac12}\frac{|E\bigcap\{x:|x|< \rho\mu_k\}|}{\mu_k}
\end{align}
and
\begin{align}\label{equ-10-5-1713}
I_{1,2}&\leq  C_3 \int_{{\Big\{x:\rho\mu_k<|x|< \mu_k-\delta\mu_k^{-\frac{2m-1}{3}}\Big\}}}{\mu_k^{m-1}(\mu_k^{2m}-x^{2m})^{-\frac12}\,\mathrm dx}\nonumber\\
&\leq C_3\int_{\rho}^1{(1-x^{2m})^{-\frac12}\,\mathrm dx},
\end{align}
where $C_2, C_3>0$ are two absolute constants. Since $\int_{\rho}^1{(1-x^{2m})^{-\frac12}\,\mathrm dx}\rightarrow 0$ as $\rho\rightarrow 1^{-}$,  we can choose $\rho=\rho_0\in (0, 1)$ so that
\begin{align}\label{equ-10-5-1714}
\int_{\rho_0}^1{(1-x^{2m})^{-\frac12}\,\mathrm dx}<\frac{C_1}{100C_3}.
\end{align}
Then  it follows from \eqref{equ-10-5-1711}-\eqref{equ-10-5-1714} that
\begin{eqnarray}\label{equ3.63}
I_1
\leq C_2(1-\rho_0^{2m})^{-\frac12}\frac{|E\bigcap\{x:|x|< \rho_0\mu_k\}|}{\mu_k}+\frac{C_1 }{100}.
\end{eqnarray}

For the term $I_2$, we can use  Lemma \ref{lem-append} again to find $k_2\in\mathbb{N}^+$ and $C_4>0$ so that
\begin{align}\label{equ3.64}
I_2\leq C_4\mu_k^{\frac{m-2}{3}}\mu_k^{-\frac{2m-1}{3}}=C_4\mu_k^{-\frac{m+1}{3}}\leq \frac{C_1}{100}\;\;\mbox{for all}\;\;k\ge k_2.
\end{align}

We next deal with the term $I_3$. First we claim that for large $k$,
\begin{align}\label{equ-est-S(x)}
S_k^+(x)>\frac{\sqrt{\delta}}{3}\mu_k^{\frac{2m-1}{3}}\left(|x|-\mu_k\right),
\;\;\mbox{when}\;\;|x|>\mu_k+\delta\mu_k^{-\frac{2m-1}{3}}.
\end{align}
(Here, $S_k^+(x)$ is given by \eqref{eq-app-5}.) Indeed, by \eqref{eq-app-5}, one can directly check
that
\begin{eqnarray}\label{4.30,10-23}
S^+_k\left(\mu_k+\delta\mu_k^{-\frac{2m-1}{3}}\right)\geq \frac{2}{3}\delta^{\frac{3}{2}}\;\;\mbox{for all}\;\;k>>1.
\end{eqnarray}
%Then we have
%\begin{eqnarray}
%0<c_m<\min\left\{S^+\left(\mu_k+\mu_k^{-\frac{2m-1}{3}}\right),\,\sqrt{2m}\right\}.
%\end{eqnarray}
We define the function:
\begin{eqnarray}\label{4.31,10-23}
F(x)= S_k^+(x)-\frac{\sqrt{\delta}}{3}\mu_k^{\frac{2m-1}{3}}\left(|x|-\mu_k\right),\; x\in \mathbb{R}.
\end{eqnarray}
From \eqref{4.31,10-23} and \eqref{4.30,10-23}, we have that
\begin{eqnarray}\label{4.32,10-23}
F\left(\mu_k+\delta\mu_k^{-\frac{2m-1}{3}}\right)>0\;\;\mbox{for all}\;\;k>>1
\end{eqnarray}
and that when $k\in \mathbb{N}^+$,
\begin{align}\label{4.33,10-23}
F'(x)&=\sqrt{|x^{2m}-\mu_k^{2m}|}-\frac{\sqrt{\delta}}{3}\mu_k^{\frac{2m-1}{3}}\nonumber\\
&\ge\left(\left(\mu_k+\delta\mu_k^{-\frac{2m-1}{3}}\right)^{2m}-\mu_k^{2m}\right)^{\frac12}
-\frac{\sqrt{\delta}}{3}\mu_k^{\frac{2m-1}{3}}\nonumber\\
&\ge\left(\sqrt{2m}-\frac{1}{3}\right)\sqrt{\delta}\mu_k^{\frac{2m-1}{3}}>0,
\;\;\mbox{when}\;\;|x| \geq\mu_k+\delta\mu_k^{-\frac{2m-1}{3}}.
\end{align}
From \eqref{4.33,10-23}, \eqref{4.32,10-23} and \eqref{4.31,10-23}, we obtain \eqref{equ-est-S(x)}.
Now by Lemma \ref{lem-append}, \eqref{eq-reminder-es} and \eqref{equ-est-S(x)}, there is some  constant $C_5>0$
(independent of $k$) so that
\begin{eqnarray*}
I_3 &\leq &
C_5\mu_k^{m-1}\int_{\left\{x:|x|>\mu_k+\delta\mu_k^{-\frac{2m-1}{3}}\right\}}
{(x^{2m}-\mu_k^{2m})^{-\frac12}
\exp\left\{-\frac{\sqrt{\delta}}{3}\mu_k^{\frac{2m-1}{3}}(|x|-\mu_k)\right\}\,\mathrm dx}\\
&\leq &C_5\mu_k^{-\frac12} \int_{\mu_k+\delta\mu_k^{-\frac{2m-1}{3}}}^{\infty}{(x-\mu_k)^{-\frac12}
\exp\left\{-\frac{\sqrt{\delta}}{3}\mu_k^{\frac{2m-1}{3}}(x-\mu_k)\right\}\,\mathrm dx}.
\end{eqnarray*}
By changing variable $\mu_k^{\frac{2m-1}{3}}(x-\mu_k)=y$ in the second integral above, we
can find
 $k_3\in\mathbb{N}^+$ and $C_6>0$ so that
\begin{eqnarray}\label{equ3.65}
I_3\leq C_6\mu_k^{-\frac12-\frac{2m-1}{6}}\leq \frac{C_1}{100}\;\;\mbox{for all}\;\;k\ge k_3.
\end{eqnarray}
Finally, it follows from \eqref{equ-10-5-1710}, \eqref{equ3.62}, \eqref{equ3.63}, \eqref{equ3.64}
and \eqref{equ3.65} that there is  $C>0$, independent of $k$, so that
\begin{eqnarray}\label{equ3.65-1801}
\frac{|E\bigcap\{x:|x|<\rho_0\mu_k\}|}{\mu_k}\geq C,\;\;\mbox{when}\;\;k\ge k_4:=\max\{k_1,\,k_2,\,k_3\}.
\end{eqnarray}
By \eqref{equ3.65-1801} and  Lemma \ref{lemma4.1-4-17}, we deduce that
\begin{align}\label{equ-2-24-2}
\varliminf_{\mathbb{N}\ni k\rightarrow  \infty} \frac{|E\bigcap[-\rho_0b k^{\frac{1}{m+1}},\, \rho_0bk^{\frac{1}{m+1}}]|}{\rho_0bk^{\frac{1}{m+1}}}>0,
\end{align}
where $b$ is given by \eqref{4.4-10-17}. By \eqref{equ-2-24-2} and  Proposition \ref{prop-915}, we conclude that
  $E$ is \emph{weakly thick}.

In summary, we have completed the proof of Theorem \ref{thm-4-ne}.\qed

\begin{remark}\label{rmk-bound-set}
\noindent $(i)$  It follows immediately from  Theorem  \ref{thm-4-ne} that  any bounded set in $\R$ is not an observable  set at any time for  \eqref{equ-1.0}
 (with $m\in \mathbb{N}^+$). Indeed, if $E\subset [-R, R]$ for some $R>0$, then
\begin{align}\label{equ-1026-10}
 \frac{|E\bigcap [-x,\, x]|}{|x|} \leq \frac{2R}{|x|}\rightarrow 0,\,\,\,\text{as}\,\,x \rightarrow +\infty.
\end{align}
Thus $E$ is not  weakly thick, consequently, it is not  an observable set for  \eqref{equ-1.0}.

 As a comparison, it is natural to ask if a bounded measurable subset $E\subset\R$ is an  observable set  for the heat equation:
 $\partial_t u+Hu=0$ where $H=-\Delta+|x|^{2m}$ with $m> 1$.  This seems to be open (see \cite{M09}).

 \noindent $(ii)$ One can also construct unbounded sets which are not observable sets at any time for  \eqref{equ-1.0}
 (with $m\in \mathbb{N}^+$).
 For example, let
\begin{align}\label{equ3.61}
E:=\bigcup_{j=1}^{\infty}{E_j},\;\;\mbox{with}\;\;E_j:=[j,\,j+(j+1)^{-\varepsilon}],\,\,\,\varepsilon>0.
\end{align}
It is clear that $E\subset [0, \infty)$; $E$ is unbounded; $|E|=\infty$ if $0<\varepsilon\leq 1$. Let $x>2$ and let $j_0$ be the unique positive integer so that  $j_0\leq x<j_0+1$. Then by \eqref{equ3.61}  we have
\begin{align}\label{equ3.61-1801}
 \frac{|E\bigcap [-x, x]|}{|x|}
 %\leq \sum_{j=1}^{j_0}{\left|E\bigcap[j, j+1]\right|}\nonumber\\
 &\leq\frac{1}{j_0}\sum_{j=1}^{j_0}{(j+1)^{-\varepsilon}} \leq Cx^{-\varepsilon}.
\end{align}
Since the right hand side  of \eqref{equ3.61-1801} tends to $0$ as $x \to +\infty$,  $E$ is not  weakly thick, therefore it is not an observable set for  \eqref{equ-1.0} for all $m\in \mathbb{N}^+$.
 \end{remark}

\subsection{Comparison of thicknesses for two kinds of observable sets}\label{sec4-diff}
 In this subsection, we shall show that the class of \emph{thick} sets is strictly included in the class of \emph{weakly thick sets}.

\begin{proposition}\label{prop-916}
Every  thick  set is   weakly thick.
\end{proposition}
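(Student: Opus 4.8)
The plan is to use a direct covering argument: thickness guarantees that $E$ deposits a fixed fraction of Lebesgue mass in every window of length $L$, so by packing roughly $2x/L$ disjoint such windows into $[-x,x]$ we force $|E\cap[-x,x]|$ to grow linearly in $x$, which is exactly what weak thickness asks for.

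First I would fix $\gamma>0$ and $L>0$ as in the definition ($\textbf{D}_3$) of thickness, so that $|E\cap[y,y+L]|\ge\gamma L$ for every $y\in\mathbb{R}$. Given $x>L$, set $N:=\lfloor 2x/L\rfloor\ge 1$ and consider the intervals $I_j:=[-x+jL,\,-x+(j+1)L]$ for $j=0,1,\dots,N-1$. These are pairwise disjoint, and each is contained in $[-x,x]$ since $-x+NL\le -x+2x=x$. Because $|E\cap I_j|\ge\gamma L$ for each $j$, summing over the disjoint pieces gives $|E\cap[-x,x]|\ge N\gamma L\ge(2x/L-1)\gamma L=2\gamma x-\gamma L$.

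Dividing by $x$ then yields $\frac{|E\cap[-x,x]|}{x}\ge 2\gamma-\frac{\gamma L}{x}$ for all $x>L$, and letting $x\to+\infty$ gives $\varliminf_{x\to+\infty}\frac{|E\cap[-x,x]|}{x}\ge 2\gamma>0$, i.e. $E$ is weakly thick (definition ($\textbf{D}_4$)). There is essentially no obstacle here: the only mild points of care are ensuring the packed windows stay inside $[-x,x]$, which the floor function handles, and noting that the leftover strip of length $<L$ at the right end is simply discarded, costing only the additive term $\gamma L$ that vanishes after division by $x$.
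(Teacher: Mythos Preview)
Your proof is correct and follows essentially the same covering argument as the paper: both pack $[-x,x]$ with roughly $2x/L$ disjoint length-$L$ windows and apply the thickness lower bound on each. The only cosmetic differences are that the paper anchors its windows at integer multiples of $L$ (centered around $0$) rather than starting from $-x$, and ends with the slightly weaker constant $\gamma$ instead of your $2\gamma$.
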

\begin{proof}
Let $E$ be a \emph{thick set}. According to the definition ($\textbf{D}_3$), there is $L>0$ and $\gamma>0$ so that
\begin{align}\label{equ-915-1}
\frac{|E\bigcap [x,x+L]|}{L}\geq \gamma\;\;\mbox{ for all }\;\;   x\in \mathbb{R}.
\end{align}
Given $x\geq   L$, there is $n_x\in \mathbb{N}^+$ so that
\begin{align}\label{equ-2-23-11}
n_xL\leq x < (n_x+1)L.
\end{align}
By \eqref{equ-2-23-11}, we find
\begin{eqnarray*}
 [-x,x] \supset [-n_x L,n_x L] = \bigcup_{-n_x \leq j \leq n_x-1} [j L, (j+1)L].
\end{eqnarray*}
This, along with  \eqref{equ-915-1} and \eqref{equ-2-23-11}, yields
\begin{eqnarray*}
\Big|E\bigcap[-x,x]\Big|
&\geq & \sum_{-n_x\leq j \leq n_x-1} \Big|E\bigcap [j L, (j+1)L]\Big|\geq \sum_{-n_x\leq j \leq n_x-1} \gamma L\\
&=& 2\gamma n_x L \geq \gamma(n_x+1)L  > \gamma x.
\end{eqnarray*}
Since $x\geq   L$ is arbitrarily given, the above leads to
$$
\varliminf_{ x \rightarrow +\infty} \frac{|E\bigcap [-x,x]|}{x} \geq \gamma  >0.
$$
Hence,  $E$ is \emph{weakly thick}. This ends the proof of Proposition \ref{prop-916}.
\end{proof}

\begin{proposition}\label{prop-916-1}
A  weakly thick  set may not  be  thick.
\end{proposition}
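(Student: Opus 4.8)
The plan is to \emph{disprove} the implication ``weakly thick $\Rightarrow$ thick'' by exhibiting one explicit set that is weakly thick yet contains arbitrarily long intervals disjoint from it; the latter is incompatible with thickness, since the defining inequality \eqref{equ-set} must hold for one \emph{fixed} $L>0$ over \emph{every} translate $[x,x+L]$, whereas a long hole in $E$ can be used to make $|E\cap[x,x+L]|$ as small as $0$. The cleanest such example is a half-line, which is precisely the set singled out in remarks $(\textbf{b}_1)$ and $(\textbf{b}_3)$.

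Concretely, I would take $E=(0,+\infty)$ (the set $(-\infty,0)$ would do equally well). The first step is to check that $E$ is \emph{weakly thick}: for every $x>0$ one has $E\cap[-x,x]=(0,x]$, hence $|E\cap[-x,x]|/x=1$ and therefore
\[
\varliminf_{x\to+\infty}\frac{|E\cap[-x,x]|}{x}=1>0,
\]
so $E$ satisfies \eqref{def-1-3}. (This is also the content of Example~\ref{exp-1}.) The second step is to show that $E$ is \emph{not thick}. Assume for contradiction that \eqref{equ-set} holds for some $\gamma>0$ and $L>0$. Taking $x=-L$ gives
\[
E\cap[x,x+L]=(0,+\infty)\cap[-L,0]=\emptyset,
\]
whence $0=|E\cap[x,x+L]|\ge\gamma L>0$, a contradiction. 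Hence $E$ is weakly thick but not thick, which is the assertion of the proposition.

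There is, honestly, no substantial obstacle here; the only point requiring care is making sure the chosen set really is weakly thick, and a half-line makes that transparent. If one instead wished for a two-sided counterexample of infinite measure in both directions, one could take a union of geometrically growing teeth together with its reflection, e.g. $E=\bigl(\bigcup_{k\ge1}[4^{k},2\cdot4^{k}]\bigr)\cup\bigl(-\bigcup_{k\ge1}[4^{k},2\cdot4^{k}]\bigr)$: the most recent tooth alone already forces $|E\cap[-x,x]|\ge c\,x$ for a fixed $c>0$, so $E$ is weakly thick, while the gap $(2\cdot4^{k},4^{k+1})$ has length $2\cdot4^{k}\to\infty$, so $E$ fails to be thick. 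For the statement at hand, however, the half-line already suffices, and I would present only that.
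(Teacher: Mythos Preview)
Your proof is correct and follows essentially the same approach as the paper: the paper proves this proposition via Example~\ref{exp-1}, taking the half-line $E=[a,\infty)$ and verifying exactly as you do that $\varliminf_{x\to+\infty}|E\cap[-x,x]|/x=1$ while $E\cap[x,x+L]=\emptyset$ for $x<a-L$. Your optional two-sided ``geometric teeth'' example is also in the spirit of the paper's second example (Example~\ref{exp-2}), which uses $\bigcup_{j\ge1}\bigl([2^j,2^{j+1}-j]\cup[-2^{j+1}+j,-2^j]\bigr)$ to make the same point with unbounded gaps.
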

This can be seen from the next two  examples.

\begin{example}\label{exp-1}
Let $E=[a,\infty)$ with $a\in \mathbb{R}$. Then $E$ is weakly thick but not thick.
\end{example}
Here is the proof: Arbitrarily fix $a\in \mathbb{R}$ and let $E=[a,\infty)$. Then, when $x>|a|$, we have $|E\cap[-x,x]|=x-a$. Hence,
$$
\varliminf_{ x \rightarrow +\infty} \frac{|E\bigcap [-x,x]|}{x}=1.
$$
So $E$ is \emph{weakly thick}.

On the other hand, we arbitrarily fix $L>0$. Then we have that for each
$x<a-L$, $E\cap[x,x+L]=\emptyset$.
 So we have
$$
|E\cap[x,x+L]|=0\;\;\mbox{for all}\;\; x<a-L.
$$
This shows that $E$ is not \emph{thick}.

\begin{example}\label{exp-2}
Let \begin{align}\label{equ-915-4}
E=\bigcup_{j=1}^\infty \Big([2^j, 2^{j+1}-j]\cup [-2^{j+1}+j,-2^j]\Big).
\end{align}
Then $E$ is weakly thick but not thick.
\end{example}
Here is the proof:  We first show that  $E$ is \emph{weakly thick}.   To this end, we first claim that
\begin{align}\label{equ-915-5}
\varliminf_{  x \rightarrow +\infty} \frac{|E\bigcap [0,x]|}{x}  >0.
\end{align}
For this purpose, we arbitrarily fix $x\geq 32$. Then there is a unique integer $n\geq 5$
so that
\begin{align}\label{4.39,10-24}
2^n\leq x<2^{n+1}.
\end{align}
From \eqref{4.39,10-24},
 we find that $[0,x]\supset [0,2^{n}]\supset [2^{n-1},2^n-n+1]$.  This, along with  \eqref{equ-915-4}, yields that
\begin{align*}
E\cap [0,x] \supset [2^{n-1},2^n-n+1].
\end{align*}
The above, together with  \eqref{4.39,10-24}, implies that
\begin{eqnarray*}
\Big|E\cap [0,x]\Big| \geq 2^{n-1}-n+1>x/8,
\end{eqnarray*}
which leads to \eqref{equ-915-5}. By \eqref{equ-915-5}, we find that $E$ is \emph{weakly thick}.

We next  show that $E$ is not \emph{thick}. To this end, we arbitrarily fix  $L>0$. Let $k\in \mathbb{N}^+$ so that $k> 3L$. Using \eqref{equ-915-4}, we deduce that
\begin{align}\label{equ-2-23-12}
E \bigcap [2^k-3L,2^k-L]=\emptyset.
\end{align}
But clearly  one has
 \begin{align}\label{equ-2-23-13}
[x,x+L]\subset [2^k-3L,2^k-L],\;\;\mbox{when}\;\; x\in [2^k-3L,2^k-2L].
\end{align}
Combining \eqref{equ-2-23-12} and \eqref{equ-2-23-13}, we find that
$$
 E \bigcap [x,x+L] = \emptyset,\;\;\mbox{when}\;\; x\in [2^k-3L,2^k-2L].
$$
This shows that $E$ is not \emph{thick}.

\begin{remark}\label{rmk-4.13}
Given a  sequence $\{a_j\}_{j=1}^{\infty}$, with $0<a_j<2^{j-1}$, let
 \begin{align}\label{equ-915-4-1}
\tilde{E} = \bigcup_{j=1}^\infty \Big([2^j, 2^{j+1}-a_j]\bigcup [-2^{j+1}+a_j,-2^j]\Big).
\end{align}
Then  $\tilde{E}$ is \emph{ thick } if and only if $\{a_j\}_{j=1}^{\infty}$ is bounded, i.e., there is $L_0>0$ (independent of $j$) so that
 \begin{align}\label{equ-915-4-2}
a_j\leq L_0   \;\;\mbox{for all}\;\;j\ge 1.
\end{align}

Indeed, if $\tilde{E}$ is \emph{thick}, then by \eqref{equ-set} (where $E=\tilde{E}$ and
$x=2^{j+1}-a_j$),  we see that for all $j$ large enough,
$$
\big|[2^{j+1}, 2^{j+1}-a_j+L]\big|=\big|\tilde{E}\bigcap [x,x+L]\big|\ge L\gamma>0.
$$
Since  $a_j<2^{j-1}$, the above leads to  \eqref{equ-915-4-2}.
 Conversely, we suppose that \eqref{equ-915-4-2} is true. Set $g(x):=|\tilde{E}\bigcap [x,x+2L_0]|/(2L_0)$. By \eqref{equ-915-4-1} and \eqref{equ-915-4-2}, we can find
  $M>0$ so that $g(x)\ge \frac12$, when $|x|>M$. Since $g(\cdot)$ is continuous and positive
  over $[-M,M]$, we can choose  $\gamma_0>0$ so that $g(x)\ge \gamma_0$ for all $x\in\mathbb{R}$. So $\tilde{E}$ is thick.

Intuitively speaking, the sequence  $\{a_j\}_{j=1}^{\infty}$ describes the gaps of the set $\tilde{E}$. From \eqref{equ-915-4-2}, we see that a thick set must have uniformly bounded gaps, while from \eqref{equ-915-4} ($a_j=j$), we find that a weakly thick set can contain increasing gaps of arbitrarily large size.

\end{remark}

\section{Further results for Hermite Schr\"{o}dinger equations in
$\mathbb{R}^n$}\label{two-point-hermite}
We start with recalling several known facts related to the spectral theory of the Hermite operator $H=-\Delta+|x|^2$ (in $L^2(\mathbb{R}^n)$), which can be found in \cite{Ti,Th}. {\it The first one} is about eigenvalues:
\begin{align}\label{equ1.3}
\sigma(H)=\{n+2k,\,\,\,k=0,1,2,\ldots.\}.
\end{align}
{\it The second one} is about  eigenfunctions of $H$: For each $k\in \mathbb{N} $, let
\begin{align}\label{equ1.4}
\varphi_k(x)=(2^kk!\sqrt{\pi})^{-\frac12}H_{k}(x) e^{-\frac{x^2}{2}},\;x\in \mathbb{R},
\end{align}
where  $H_{k}$ is the Hermite polynomial  given by
\begin{eqnarray}\label{5.5,11-20}
H_{k}(x)=(-1)^k e^{x^2}\frac{d^k}{dx^k}(e^{-x^2}), \; x\in \mathbb{R}.
\end{eqnarray}
Notice that $\|\varphi_k(x)\|_{L^2(\mathbb{R})}=1$ for all $k\in \mathbb{N}$.
Now for each multi-index $\alpha=(\alpha_1, \alpha_2\ldots \alpha_n)$ ($\alpha_i\in \mathbb{N}$), we define the following $n$-dimensional Hermite function by tensor product:
\begin{eqnarray}\label{5.6,10-27}
\Phi_{\alpha}(x)=\Pi_{i=1}^n{\varphi_{\alpha_i}(x_i)},\;\;x=(x_1,\dots,x_n)\in \mathbb{R}^n.
\end{eqnarray}
Then for each $k\in \mathbb{N}$, $\Phi_{\alpha}$ (with $|\alpha|=k$) is an eigenfunction of $H$ corresponding to the eigenvalue $n+2k$, and
$\{\Phi_{\alpha}\;:\;\alpha\in \mathbb{N}^n\}$ forms a complete orthonormal
basis in $L^2(\mathbb{R}^n)$.
{\it The third one} is about  the solution $u$ of the Hermite-Schr\"{o}dinger equation \eqref{equ1.1}
with the initial condition $u(0, \cdot)=f(\cdot)\in L^2(\mathbb{R}^n)$:
\begin{align}\label{equ1.7}
u(t,x)=e^{-\i tH}f=\sum_{\alpha\in \mathbb{N}^n}{e^{-\i t(n+2|\alpha|)}a_k\Phi_{\alpha}}(x),
\;\;t\geq 0,\;x\in \mathbb{R}^n,
\end{align}
where $a_k=\int_{\mathbb{R}^n}{f(x)\Phi_{\alpha}(x)\,\mathrm dx}$ is the Fourier-Hermite coefficient. Let  $K(t, x, y)$ be the kernel associated to the operator $e^{-\i tH}$. Then by Mehler's formula (see e.g. in \cite{ST,Th}), we have
\begin{align}\label{wang5.6}
e^{-\i tH}f=\int_{\mathbb{R}^n}{K(t, x, y)f(y)\,\mathrm dy},\,\,\,\,t\in\mathbb{R}^+\setminus \frac{\pi}{2}\mathbb{N},
\end{align}
where
\begin{align}\label{equ1.12}
K(t, x, y)=\frac{e^{-\i \pi n/4}}{(2\pi\sin{2t})^{n/2}}\exp\left(\frac{\i}{2}(|x|^2+|y|^2)\cot{2t}-\frac{\i}{\sin{2t}}x\cdot y\right),\,x, y\in \mathbb{R}^n.
\end{align}
Meanwhile, it follows by \eqref{equ1.7} that
\begin{align}\label{equ1.8}
\|e^{-\i tH}f\|_{L^2(\mathbb{R}^n)}=\|f\|_{L^2(\mathbb{R}^n)}\;\;\mbox{for all}\;\;t\geq 0
\end{align}
and that
\begin{align}\label{equ1.9}
e^{-\i (t+\pi)H}f=e^{-i\pi n}e^{-\i tH}f\;\;\mbox{for all}\;\;t\geq 0.
\end{align}

\subsection{Proof of Theorem \ref{thm-1126-2}.}\label{subsec5.1} As already mentioned in Remark ($\textbf{c}_2$) in the introduction, we first build up  the observability inequality at two points in time  for the equation \eqref{equ1.1}, then by using it, obtain the observability inequality \eqref{equ-913-0} for any $T>0$.
Since we are in the general case where $n\geq 1$,
  the spectral approach used to prove Theorem \ref{thm-ob-for-free} and Theorem \ref{thm-ob-for-HO}
  seem not work. (At least, we do not know how to use it.)
  Fortunately, the kernel, associated with  $e^{-\i tH}$, has an explicit expression given by
  \eqref{equ1.12}. This expression can help us to look at the problem from a new perspective. In particular, we realize some connections between  uncertainty principles in harmonic analysis and observability inequalities.
   It deserves mentioning what follows: $(i)$ The aforementioned observability inequality at two time points
  was obtained in  \cite{WWZ} for the free Schr\"{o}dinger equation; $(ii)$  {In \cite{HS}, the authors considered a class of decaying potentials $V$ and established observability inequality at two points in time for $H=-\Delta+V$. To our best knowledge, no such kind of results have been proved for potentials that are increasing to infinity when $|x|\rightarrow\infty$.
}

\begin{theorem}\label{thm-two-point-ob}
The following conclusions are true:

(\romannumeral1) If $T$ and $S$, with $T>S\ge 0$, satisfy that $T-S\ne \frac{k\pi}{2}$ for all $k\in \mathbb{N}^+$, then there is $C:= C(n)$ so that
\begin{align}\label{equ1.10}
\int_{\mathbb{R}^n}{|u(0, x)|^2\,\mathrm dx}\leq Ce^{\frac{Cr_1 r_2}{\sin{2(T-S)}}}\left(\int_{B^c(x_1,\, r_1)}{|u(S, x)|^2\,\mathrm dx}+\int_{B^c(x_2,\, r_2)}{|u(T, x)|^2\,\mathrm dx}\right)
\end{align}
holds for any closed balls $B(x_1,\, r_1)$ and $B(x_2,\, r_2)$ in $\mathbb{R}^n$ and any solution $u$ to  \eqref{equ1.1}.

(\romannumeral2) If $T$ and $S$, with $T>S\ge 0$, satisfy that $T-S=\frac{k\pi}{2}$ for some $k\in \mathbb{N}^+$, then for any closed balls $B(x_1,\, r_1)$ and $B(x_2,\, r_2)$ in $\mathbb{R}^n$,  there is no $C>0$ so that
\begin{align}\label{equ1.11}
\int_{\mathbb{R}^n}{|u(0, x)|^2\,\mathrm dx}\leq C\left(\int_{B^c(x_1,\, r_1)}{|u(S, x)|^2\,\mathrm dx}+\int_{B^c(x_2,\, r_2)}{|u(T, x)|^2\,\mathrm dx}\right)
\end{align}
holds for all solutions $u$ to  \eqref{equ1.1}.
\end{theorem}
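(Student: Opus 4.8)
The plan is to read both parts off Mehler's formula \eqref{equ1.12}, which displays $e^{-\i\tau H}$ (for $\tau\notin\tfrac\pi2\mathbb{N}$) as a chirp--dilation--Fourier (metaplectic) operator, combined with the quantitative Nazarov uncertainty principle of \cite{Jam}. By the group law $u(T,\cdot)=e^{-\i(T-S)H}u(S,\cdot)$ we may pass to the ``two-time'' picture: put $\tau:=T-S$, $g:=u(S,\cdot)$, $h:=\one_{B(x_1,r_1)}g$; since $\|u(0,\cdot)\|_{L^2}^2=\|u(S,\cdot)\|_{L^2}^2=\|h\|_{L^2}^2+\|g\|_{L^2(B^c(x_1,r_1))}^2$, it suffices to bound $\|h\|_{L^2}^2$ by the right-hand side and then add back $\|g\|_{L^2(B^c(x_1,r_1))}^2$.

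For part~(\romannumeral1), \eqref{equ1.12} gives $e^{-\i\tau H}h(x)=c_{n,\tau}\,e^{\frac\i2|x|^2\cot 2\tau}\,\widehat{g_\tau}\big(x/\sin 2\tau\big)$, where $g_\tau(y):=e^{\frac\i2|y|^2\cot 2\tau}h(y)$ is again supported in $B(x_1,r_1)$, $|c_{n,\tau}|=(2\pi|\sin 2\tau|)^{-n/2}$, and $\widehat{\cdot}$ is the Fourier transform (this is legitimate precisely because $\tau\notin\tfrac\pi2\mathbb N$, so $\sin2\tau\neq0$). Thus $\|h\|_{L^2}=\|g_\tau\|_{L^2}$, and after the change of variables $\xi=x/\sin2\tau$ and Plancherel's theorem the restriction of $e^{-\i\tau H}h$ to $B^c(x_2,r_2)$ is, up to a harmless unitary factor, the restriction of $\widehat{g_\tau}$ to the complement of the ball $B\big(x_2/\sin2\tau,\ r_2/|\sin2\tau|\big)$. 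Applying Nazarov's principle to $g_\tau$ with support set $B(x_1,r_1)$ and frequency set $B\big(x_2/\sin2\tau,\ r_2/|\sin2\tau|\big)$ — for which the product of $|B(x_1,r_1)|^{1/n}$ with the mean width of the frequency ball equals a dimensional constant times $r_1r_2/|\sin2\tau|$ — yields
\begin{equation*}
\|h\|_{L^2}^2\ \le\ C(n)\,e^{\,C(n)\,r_1r_2/|\sin 2(T-S)|}\,\|e^{-\i\tau H}h\|_{L^2(B^c(x_2,r_2))}^2 .
\end{equation*}
Finally, by unitarity of $e^{-\i\tau H}$,
$\|e^{-\i\tau H}h\|_{L^2(B^c(x_2,r_2))}\le\|u(T,\cdot)\|_{L^2(B^c(x_2,r_2))}+\|e^{-\i\tau H}(\one_{B^c(x_1,r_1)}g)\|_{L^2}\le\|u(T,\cdot)\|_{L^2(B^c(x_2,r_2))}+\|u(S,\cdot)\|_{L^2(B^c(x_1,r_1))}$;
inserting this and adding $\|g\|_{L^2(B^c(x_1,r_1))}^2$ to both sides produces \eqref{equ1.10}.

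For part~(\romannumeral2), when $\tau=T-S=\tfrac{k\pi}{2}$ the propagator degenerates to an essentially trivial map: by \eqref{equ1.9} the operator $e^{-\i\pi H}$ is multiplication by $e^{-\i\pi n}$, while from the spectrum \eqref{equ1.3} and the parity $\Phi_\alpha(-x)=(-1)^{|\alpha|}\Phi_\alpha(x)$ of the Hermite functions \eqref{equ1.4} one computes $e^{-\i\frac\pi2 H}\Phi_\alpha=e^{-\i\frac\pi2(n+2|\alpha|)}\Phi_\alpha=e^{-\i\frac{\pi n}{2}}\Phi_\alpha(-\cdot)$, i.e. $e^{-\i\frac\pi2 H}f(x)=e^{-\i\frac{\pi n}{2}}f(-x)$. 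Hence $u(T,x)=\omega\,u(S,\varepsilon x)$ for a unimodular $\omega$ and $\varepsilon\in\{+1,-1\}$ depending only on the parity of $k$, so the right-hand side of \eqref{equ1.11} is a constant multiple of $\|u(S,\cdot)\|_{L^2(B^c(x_1,r_1))}^2+\|u(S,\cdot)\|_{L^2(B^c(\varepsilon x_2,r_2))}^2$. Choosing for $u(S,\cdot)$ an $L^2$-normalized family of bumps concentrating inside $B(x_1,r_1)\cap B(\varepsilon x_2,r_2)$ drives this quantity to $0$ while $\|u(0,\cdot)\|_{L^2}=\|u(S,\cdot)\|_{L^2}=1$, so no finite $C$ can make \eqref{equ1.11} hold.

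The hard part is part~(\romannumeral1): one must recognize the metaplectic (chirp--dilation--Fourier) structure hidden in \eqref{equ1.12} and, above all, feed the sharp quantitative Nazarov principle the correct pair of sets — using its mean-width form rather than the cruder $|S|\,|\Sigma|$ form — so that the exponential constant comes out with the exact singular weight $r_1r_2/\sin2(T-S)$ rather than a larger power; carefully bookkeeping the $|\sin2\tau|^{\pm1}$ factors across the dilation $\xi=x/\sin2\tau$ (so that Plancherel's normalizing constants cancel) is where the delicacy lies. Part~(\romannumeral2) is then elementary: a spectral identity together with a routine concentration argument.
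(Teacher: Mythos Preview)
Your approach to (\romannumeral1) is the same as the paper's --- Mehler's formula plus Nazarov's uncertainty principle --- with one cosmetic difference: you first truncate $u(S,\cdot)$ to $B(x_1,r_1)$, apply Nazarov to the compactly supported piece, and then use a triangle inequality to pass from $e^{-\i\tau H}h$ back to $u(T,\cdot)$, whereas the paper applies Nazarov directly to the full function $g(x)=e^{\frac\i2|x|^2\cot 2T}u(0,x)$ (so that both terms in Nazarov's inequality are used, and no splitting or triangle inequality is needed). Your route is slightly longer but equally valid, and the constant you obtain is the same.

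For (\romannumeral2), your spectral identity $e^{-\i\frac\pi2 H}f(x)=e^{-\i\frac{\pi n}{2}}f(-x)$ is correct and in fact slicker than what the paper does: for the $\tau=\pi/2$ case the paper does not compute the propagator on all of $L^2$, but instead builds an explicit even tensor-product initial datum so that only even-$|\alpha|$ Hermite modes appear, and deduces $|w(0,x)|=|w(\pi/2,x)|$ for that particular solution. There is, however, a gap in your concentration step: you need $B(x_1,r_1)\cap B(\varepsilon x_2,r_2)\neq\emptyset$ in order to place bumps there, and this is not guaranteed for arbitrary balls. The paper sidesteps this by writing ``to simplify matters, we set $x_1=x_2=0$,'' i.e.\ it only treats the concentric case. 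Note that if the relevant balls are disjoint then $B^c(x_1,r_1)\cup B^c(\varepsilon x_2,r_2)=\mathbb{R}^n$ and \eqref{equ1.11} actually holds with $C=1$, so the statement as phrased tacitly requires that intersection to be nonempty.
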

\begin{proof}
Let  $K(t, x, y)$ be the kernel associated to  $e^{-\i tH}$.
The following two facts are needed. First, when $t\in\mathbb{R}^+\setminus \frac{\pi}{2}\mathbb{N}$, $K(t, x, y)$ is given by \eqref{equ1.12}.
 (Notice that the structure of the above kernel  breaks down and becomes singular at resonant times $t=\frac{\pi}{2}\cdot k$, $k=0,1,2,\ldots$.)
Second,  one has (see e.g. in \cite{KRY})
\begin{align}\label{equ1.13}
K\left(\frac{k\pi}{2}, x, y\right)=e^{-\frac{\i k\pi n}{2}}\delta\left(x-(-1)^ky\right), \,x, y\in \mathbb{R}^n.
\end{align}
where $\delta$ is the Dirac  function.

We will use \eqref{equ1.12} to show  the conclusion $(i)$.
Arbitrarily fix a solution $u$ to   \eqref{equ1.1}
 and two balls $B(x_1,\, r_1)$ and $B(x_2,\, r_2)$.
By \eqref{equ1.8} and \eqref{equ1.9}, we can assume, without loss of generality, that $S=0$ and $0<T\leq \pi$.
 In this case, we have $0<T<\pi$ and $T\ne \frac{\pi}{2}$, which implies that $\sin{2T}\ne 0$. The key observation is  as:
\begin{align}\label{equ1.14}
u(T, x)&=\frac{e^{-\i \pi n/4}}{(2\pi\sin{2T})^{n/2}}\int_{\mathbb{R}^n}
{\exp\left(\frac{\i}{2}(|x|^2+|y|^2)\cot{2T}-\frac{\i}{\sin{2T}}x\cdot y\right)u(0, y)\,\mathrm dy}\nonumber\\
&=\frac{e^{-\i \pi n/4}}{(2\pi\sin{2T})^{n/2}}e^{\frac{\i |x|^2}{2}\cdot \cot{2T}}\int_{\mathbb{R}^n}{e^{-\i \frac{x}{\sin{2T}}\cdot y}u(0, y)e^{\i \frac{|y|^2}{2}\cdot \cot{2T}}\,\mathrm dy}\nonumber\\
&=\frac{e^{-\i \pi n/4}}{(2\pi\sin{2T})^{n/2}}e^{\frac{\i |x|^2}{2}\cdot \cot{2T}}\mathcal{F}\left(e^{\i \frac{|\cdot|^2}{2}\cdot \cot{2T}}u(0, \cdot)\right)\left(\frac{x}{\sin{2T}}\right),\; x\in \mathbb{R}^n,
\end{align}
where $\mathcal{F}$ stands for the Fourier transform.
Recall that the  uncertainty principle built up in \cite{Jam} says: {\it for any $S, \Sigma\subset \mathbb{R}^n$ with $|S|<\infty$ and $|\Sigma|<\infty$, there is a positive constant
\begin{eqnarray}\label{equ1.15}
C(n, S, \Sigma):=Ce^{C\min\{|S||\Sigma|,\,\, |S|^{1/n}\omega(\Sigma),\,\, |\Sigma|^{1/n}\omega(S)\}},
 \end{eqnarray}
with $C=C(n)$, so that for any $g\in L^2(\mathbb{R}^n)$,
\begin{align}\label{equ-Nazarov-UP}
\int_{\mathbb{R}^n}{|g|^2\,\mathrm dx}\leq C(n, S, \Sigma)\left(\int_{S^c}{|g|^2\,\mathrm dx}+\int_{\Sigma^c}{|\hat{g}|^2\,\mathrm dx}\right).
\end{align}
(Here $\omega(S)$ denotes the mean width of $S$, we refer the readers to \cite{Jam} for
 its detailed definition. In particular, when $S$ is a ball in $\mathbb{R}^n$, $\omega(S)$ is the diameter of the ball.)}

 By \eqref{equ-Nazarov-UP}, where  $g(x)=e^{\i \frac{|x|^2}{2}\cdot \cot{2T}}u(0, x)$  and
 $(S^c,\, \Sigma^c)$  is replaced by $\Big(B^c(x_1,\, r_1),\, \frac{B^c(x_2,\, r_2)}{\sin{2T}}\Big)$ (here we have used the notation $kE:=\{kx,\,x\in E\}$) and then by \eqref{equ1.14}, we  find
%By \eqref{equ-Nazarov-UP} (where we set $g(x)=e^{-\i \frac{|x|^2}{2}\cdot \cot{2t}}u(0, x)$,  and
%replace the pair $(S^c,\, \Sigma^c)$  by $(B^c(x_1,\, r_1),\, \frac{B^c(x_2,\, r_2)}{\sin{2T}})$)
%and the fact \eqref{equ1.14}, we see that
\begin{align}\label{equ1.17}
\int_{\mathbb{R}^n}{|u(0, x)|^2\,\mathrm dx}\leq C\left(\int_{B^c(x_1,\, r_1)}{|u(0, x)|^2\,\mathrm dx}+\int_{B^c(x_2,\, r_2)}{|u(T, x)|^2\,\mathrm dx}\right),
\end{align}
where $C:=C\left(n, B(x_1,\, r_1), \frac{B(x_2,\, r_2)}{\sin{2T}}\right)$ is given by \eqref{equ1.15}.
In view of \eqref{equ1.15}, we find that
\begin{align}\label{equ1.18}
C\left(n, B(x_1,\, r_1), \frac{B(x_2,\, r_2)}{\sin{2T}}\right)\leq Ce^{\frac{C r_1 r_2}{\sin{2T}}},
\end{align}
which, along with \eqref{equ1.17}, leads to \eqref{equ1.10}.

Next, we will use  \eqref{equ1.13} to prove the conclusion $(\romannumeral2)$. Without loss of generality,
 we can assume  that $(S,T)=(0, \frac{\pi}{2})$ or $(S,T)=(0,\pi)$.
 In the case when $(S,T)=(0, {\pi})$, we see from
  \eqref{equ1.13} that $K(\pi, x, y)=e^{-\i \pi n}\delta(x-y)$, which implies that for any solution $u$ to \eqref{equ1.1},
\begin{align}\label{equ1.19}
|u(0, x)|=|u(\pi, x)|,\,\,\,\,x\in\mathbb{R}^n.
\end{align}
To simplify matters, we set $x_1=x_2=0$.
Let $f$ be a nonzero function in $C_0^{\infty}(B(0, r))$, with $r=\min\{r_1, r_2\}$.
Let $v$ be the solution to \eqref{equ1.1} with the initial condition: $v(\cdot,0)=f(\cdot)$.
Then, for this solution $v$,
  the left hand side of the inequality \eqref{equ1.11} is strictly positive, but the right hand side of \eqref{equ1.11} is zero since both integrals vanish.
   (Here we used \eqref{equ1.19}.) This shows that for this solution $v$, \eqref{equ1.11} is not true in the case that $(S,T)=(0, {\pi})$.

We now consider the case that  $(S,T)=(0, \frac{\pi}{2})$.
 To simplify matters, we again set $x_1=x_2=0$. Let
 $u_0(\cdot)\in C_0^{\infty}((-r/\sqrt{n}, r/\sqrt{n}))$ (with $r=\min\{r_1, r_2\}$) be a nonzero real-valued  even
 function (i.e., $u_0(x)=u_0(-x)$ for all $x\in \mathbb{R}$). Then define a function by
\begin{align}\label{equ1.20-1118}
g(x):=\prod_{i=1}^n{u_0(x_i)},\;\; x=(x_1,\dots,x_n)\in \mathbb{R}^n.
\end{align}
Let $w$ be the solution to \eqref{equ1.1} with the initial condition: {$w(0, \cdot)=g(\cdot)$, where $g$ is given by \eqref{equ1.20-1118}.
It is clear that
 $\text{supp}\,  w(0, x)\subset B(0, r)$.}{}
We claim
\begin{align}\label{equ1.20}
|w(0, x)|=\left|w\left({\pi}/{2}, x\right)\right|,\,\,\,\,x\in\mathbb{R}^n.
\end{align}
Indeed,
since $\{\Phi_{\alpha} : \alpha\in \mathbb{N}^n\}$ forms a complete orthonormal
basis in $L^2(\mathbb{R}^n)$, we have
\begin{align}\label{equ1.21.1129}
w(0, x)=\sum_{\alpha\in\mathbb{N}^n}\langle w(0, \cdot),\, \Phi_{\alpha}(\cdot) \rangle_{L^2(\mathbb{R}^n)} \Phi_{\alpha}.
\end{align}
Meanwhile, by \eqref{equ1.4}, \eqref{5.5,11-20} and \eqref{5.6,10-27}, we see that
when  $\alpha_i$ is odd/even, $\varphi_{\alpha_i}$ is odd/even.
Thus,  when
$|\alpha|$ is odd, there exists some $j$, $1\leq j\leq n$, such that $\alpha_j$ is odd, consequently,  $\varphi_{\alpha_j}$ is an odd function, which implies that $\langle u_0(x_j),\,\varphi_{\alpha_j}(x_j) \rangle_{L^2(\mathbb{R})}=0$. This, along with \eqref{equ1.20-1118}, yields
\begin{align}\label{equ1.21}
\langle w(0, \cdot),\, \Phi_{\alpha}(\cdot) \rangle_{L^2(\mathbb{R}^n)} =\prod_{i=1}^n\langle u_0(\cdot),\,\varphi_{\alpha_i}(\cdot) \rangle_{L^2(\mathbb{R})}=0,\,\,\,\text{if}\,\, |\alpha|\,\, \text{is odd}.
\end{align}
From \eqref{equ1.21.1129} and \eqref{equ1.21}, we see
\begin{align}\label{equ1.22}
w(0, \cdot)=\sum_{|\alpha|\, \text{is even}}\langle w(0, \cdot),\, \Phi_{\alpha}(\cdot) \rangle_{L^2(\mathbb{R}^n)} \Phi_{\alpha}(x),\; x\in \mathbb{R}^n.
\end{align}
On the other hand, we obtain from \eqref{equ1.7} that
\begin{align}
w({\pi}/{2}, x)&=\sum_{\alpha\in\mathbb{N}^n}e^{-\i \frac{\pi}{2}(n+2|\alpha|)}\langle w(0, \cdot),\, \Phi_{\alpha}(\cdot) \rangle_{L^2(\mathbb{R}^n)} \Phi_{\alpha}(x)\nonumber\\
&=e^{-\frac{\i n\pi}{2}}\sum_{\alpha\in\mathbb{N}^n}(-1)^{|\alpha|}\langle w(0, \cdot),\, \Phi_{\alpha}(\cdot) \rangle_{L^2(\mathbb{R}^n)} \Phi_{\alpha}(x)\nonumber\\
&=e^{-\frac{\i n\pi}{2}}\sum_{|\alpha|\, \text{is even}}\langle w(0, \cdot),\, \Phi_{\alpha}(\cdot) \rangle_{L^2(\mathbb{R}^n)} \Phi_{\alpha}(x),\;
x\in \mathbb{R}^n,\nonumber
\end{align}
which, together with  \eqref{equ1.22}, leads to  \eqref{equ1.20}.

Hence, for the above solution $w$,  the left hand side of the inequality \eqref{equ1.11} is strictly positive, but the right hand side of \eqref{equ1.11} is zero. (Here we used \eqref{equ1.20}.)
This ends the proof of Theorem \ref{thm-two-point-ob}.
\end{proof}

%
%\begin{corollary}\label{cor-1004-1}
%There exists $C=C(n)>0$ so that for any closed ball $B(x_0,r)$, any $T>0$ and any  solution
%$u$ to \eqref{equ1.1},
%\begin{align}\label{equ-913-0}
%\int_{\mathbb{R}^n}{|u(x, 0)|^2\,dx}\leq C(1+\frac{1}{T})e^{Cr^2(1+\frac{1}{T})}\int_0^{T} \int_{B^c(x_0,\, r)}{|u(x, t)|^2\,dx}dt.
%\end{align}
%\end{corollary}
%\begin{proof}
Based on Theorem \ref{thm-two-point-ob}, we are on the position to  show  Theorem \ref{thm-1126-2}.

\textbf{Proof of Theorem \ref{thm-1126-2}.}   Arbitrarily fix a ball $B(x_0,r)$ and a solution  $u$ to \eqref{equ1.1}. We first consider the case when $0<T\leq\pi/4$.
According to   $(i)$ of Theorem \ref{thm-two-point-ob} (with $r_1=r_2=r$ and $x_1=x_2=x_0$), there exists $C=C(n)$ so that when $0\leq s<t<T$,
\begin{align}\label{equ-913-1}
\int_{\mathbb{R}^n}{|u(0, x)|^2\,\mathrm dx}\leq Ce^{\frac{Cr^2}{\sin{2(t-s)}}}\left(\int_{B^c(x_0,\, r)}{|u(s, x)|^2\,\mathrm dx}+\int_{B^c(x_0,\, r)}{|u(t, x)|^2\,\mathrm dx}\right).
\end{align}
Since $0\leq s<t<T$ and $T<\pi/4$, we have $0<2(t-s)<\pi/2$. Thus $\sin 2(t-s) \geq 4(t-s)/\pi$. Moreover, if $(s,t)\in [0,T/3]\times [2T/3,T]$, we have $2(t-s)\geq 2T/3$. These show that
\begin{align}\label{equ-913-2}
\sin{2(t-s)} \geq 4T/(3\pi), \quad (s,t)\in [0,T/3]\times [2T/3,T].
\end{align}
Integrating \eqref{equ-913-1} with $s$ over $s\in [0,T/3]$ and $t$ over $t\in [2T/3,T]$, using \eqref{equ-913-2}, we obtain that
\begin{eqnarray*}
\left(\frac{T}{3}\right)^2\int_{\mathbb{R}^n}{|u(0, x)|^2\,\mathrm dx}
&\leq& C\int_0^{\frac{T}{3}}\int_{\frac{2T}{3}}^Te^{\frac{Cr^2}{4T/(3\pi)}}\left(\int_{B^c(x_0,\, r)}{|u(s, x)|^2\,dx}+\int_{B^c(x_0,\, r)}{|u(t, x)|^2\,dx}\right)\mathrm dt \mathrm ds\nonumber\\
&\leq& \frac{C T}{3}e^{\frac{3\pi Cr^2}{4T}}\left(\int_0^{\frac{T}{3}}\int_{B^c(x_0,\, r)}|u(s, x)|^2\,dx \d s+ \int_{\frac{2T}{3}}^T\int_{B^c(x_0,\, r)}|u(t, x)|^2\,\mathrm dx \mathrm dt\right)\nonumber\\
&\leq& \frac{C T}{3}e^{\frac{3\pi Cr^2}{4T}} \int_0^T\int_{B^c(x_0,\, r)}|u(t, x)|^2\,\mathrm dx \mathrm dt.
\end{eqnarray*}
From the above, we obtain
\begin{align}\label{equ-913-4}
 \int_{\mathbb{R}^n}{|u(0, x)|^2\,\mathrm dx}
 \leq\frac{3 C}{T}e^{\frac{3\pi Cr^2}{4T}} \int_0^T\int_{B^c(x_0,\, r)}|u(t, x)|^2\,\mathrm dx \mathrm dt,
\end{align}
which leads to  \eqref{equ-913-0} for the case that $0<T\leq \pi/4$.

We next consider the case when  $T>\pi/4$. By \eqref{equ-913-4} with $T=\pi/4$, we find
\begin{eqnarray*}
 \int_{\mathbb{R}^n}{|u(0, x)|^2\,\mathrm dx}
 \leq\frac{12 C}{\pi}e^{3Cr^2} \int_0^{\frac{\pi}{4}}\int_{B^c(x_0,\, r)}|u(t, x)|^2\,\mathrm dx \mathrm dt,
\end{eqnarray*}
from which, it follows that when $T>\pi/4$,
\begin{eqnarray*}
 \int_{\mathbb{R}^n}{|u(0, x)|^2\,\mathrm dx}
 \leq\frac{12 C}{\pi}e^{\frac{6Cr^2}{\pi}} \int_0^{T}\int_{B^c(x_0,\, r)}|u(t, x)|^2\,\mathrm dx \mathrm dt.
\end{eqnarray*}
The above leads to \eqref{equ-913-0} for the case where $T> \pi/4$. This ends the proof of
Theorem \ref{thm-1126-2}.\qed

\subsection{Proof of Theorem \ref{thm-1126-1}.}\label{subsec5.2}
Since the equation \eqref{equ1.1} is rotation invariant, we can assume, without loss of generality, that $a=(1,\ldots, 0)\in\mathbb{R}^n$. In what follows, $E:=B^c(0,\, r)\bigcap\{x\in\mathbb{R}^n:\, x_1\ge 0\}$.

\noindent{\it Step 1.  We  prove that $(i)\Rightarrow (ii)$.}

Given $k\in \mathbb{N}^+$, write $\vec{k}:=(k,\ldots, k)\in \mathbb{R}^n$; let
\begin{align}\label{wang5.26}
u_{0,\,k}(x):=\pi^{-\frac{n}{4}}e^{-\frac{|x|^2}{2}-i\vec{k}\cdot x},\;\;x\in \mathbb{R}^n;
\end{align}
and write $u_k(t,x)$ for the solution of \eqref{equ1.1}, with the initial condition:
 $u_k(0,x)=u_{0,\,k}(x)$.
We claim that
\begin{align}\label{1120-2}
\|u_{0,\,k}\|_{L^2(\mathbb{R}^n)}=1\;\;\mbox{for all}\;\;k\in \mathbb{N}^+
\end{align}
and  that
 \begin{align}\label{1120-3}
\lim_{k\rightarrow\infty}\int_0^{T}\int_{E}|u_k(t,x)|^2\,\mathrm dx\,\mathrm dt=0
\;\;\mbox{for each}\;\; T\in (0,  \frac{\pi}{2}].
\end{align}
When this is done, ``$(i)\Rightarrow (ii)$" follows from \eqref{1120-2}
and \eqref{1120-3} at once.

The equality \eqref{1120-2} follows from \eqref{wang5.26} and the direct calculation:
$$
\int_{\mathbb{R}^n}{|u_{0,\,k}|^2\,\mathrm dx}=\pi^{-\frac{n}{2}}\int_{\mathbb{R}^n}{e^{-|x|^2}\,\mathrm dx}=1\;\;\mbox{for each}\;\;
k\in \mathbb{N}^+.
$$

 We next show \eqref{1120-3}. By  \eqref{wang5.6} and \eqref{wang5.26}, we have
\begin{align*}
u_k(t, x)&=\frac{e^{-\i n\pi/4}}{(2\pi\sin{2t})^{n/2}}\int_{\mathbb{R}^n}{\exp\left(\frac{\i}{2}(|x|^2+|y|^2)\cot{2t}-\frac{\i}{\sin{2t}}x\cdot y\right)u_{0,\,k}(y)\,\mathrm dy}\nonumber\\
&=\frac{e^{-\i n\pi/4}}{(2\pi\sin{2t})^{n/2}}e^{\frac{\i |x|^2}{2}\cdot \cot{2t}}\int_{\mathbb{R}^n}{e^{-\i \frac{x}{\sin{2t}}\cdot y}u_{0,\,k}(y)e^{\i \frac{|y|^2}{2}\cdot \cot{2t}}\,\mathrm dy},\,\,t\in (0,{\pi}/{2}), x\in \mathbb{R}^n.
\end{align*}
%A direct computation gives
%$$
%u_{0,k}(y)e^{\i \frac{|y|^2}{2} \cot{2t}}=\pi^{-\frac14}e^{-\frac{A_t}{2}(y+\frac{k}{A_t})^2}\cdot e^{-\frac{k^2}{2}(1-\frac{1}{A_t})},\,\,\,\,\text{where}\,\,\,A_t=1-\i\cot{2t}.
%$$
By changing variables in the above, we find
\begin{align*}
u_k(t, x)&=\frac{e^{-\i n \pi /4}}{(2\pi\sin{2t}\cdot A_t)^{n/2}}\exp\left\{\frac{\i |x|^2}{2}\cdot\cot{2t}-\frac{|\frac{x}{\sin{2t}}+\vec{k}|^2}{2A_t}\right\},
\,\,t\in (0,{\pi}/{2}), x\in \mathbb{R}^n,
\end{align*}
where $A_t:=1-\i\cot{2t}$.
This implies that when  $0<t<\frac{\pi}{2}$ and $x\in \mathbb{R}^n$,
\begin{align}\label{1120-6}
|u_k(t, x)|&\leq C\exp\left\{-\frac{|\frac{x}{\sin{2t}}+\vec{k}|^2}{2(1+\cot^2{2t})}\right\}= C\exp\left\{-\frac{|x+\vec{k}\sin{2t}|^2}{2}\right\}.
\end{align}

Now we arbitrarily fix  $0<\epsilon <\frac{\pi}{4}$. Several facts are given in order. Fact One:
\begin{align}\label{1120-6-01}
\int_0^{\frac{\pi}{2}}\int_{E}|u_k(t,x)|^2\,\mathrm dx\,\mathrm dt\leq\int_0^{\frac{\epsilon}{3}}\int_{\mathbb{R}^n}|u_k(t,x)|^2\,\mathrm dx\,\mathrm dt&+\int_{\frac{\epsilon}{3}}^{\frac{\pi}{2}-\frac{\epsilon}{3}}\int_{E}|u_k(t,x)|^2\,\mathrm dx\,\mathrm dt\nonumber\\
&+\int_{\frac{\pi}{2}-\frac{\epsilon}{3}}^{\frac{\pi}{2}}\int_{\mathbb{R}^n}|u_k(t,x)|^2\,\mathrm dx\,\mathrm dt.
\end{align}
Fact Two: It follows directly from \eqref{1120-2} that
\begin{align}\label{1120-6-02}
 \int_0^{\frac{\epsilon}{3}}\int_{\mathbb{R}^n}|u_k(t,x)|^2\,\mathrm dx\,\mathrm dt+\int_{\frac{\pi}{2}-\frac{\epsilon}{3}}^{\frac{\pi}{2}}\int_{\mathbb{R}^n}|u_k(t,x)|^2\,\mathrm dx\,\mathrm dt=\frac{2\epsilon}{3}.
\end{align}
Fact Three: Since $\sin{2t}\ge \sin{(2\epsilon/3)}>0$, when   $\frac{\epsilon}{3}\le t\le \frac{\pi}{2}-\frac{\epsilon}{3}$,   we deduce from \eqref{1120-6} and the definition of  $E$  that
\begin{eqnarray*}\label{1120-6-03}
\int_{E}{|u_{k}(t, x)|^2\,\mathrm dx}&\leq & C\int_{E}{e^{-|x+\vec{k}\sin{2t}|^2}\,\mathrm dx}\nonumber\\
&\leq & C\int_{k\sin{(2\epsilon/3)}}^{\infty}{e^{-x_1^2}\,\mathrm dx_1}\prod_{j=2}^{n}\int_{\mathbb{R}}{e^{-x_j^2}\,\mathrm dx_j}\longrightarrow 0,\,\,\,\text{as}\,\,k\rightarrow+\infty,
\end{eqnarray*}
from which, we can find  $K>0$ so that
\begin{align}\label{1120-6-04}
\int_{\frac{\epsilon}{3}}^{\frac{\pi}{2}-\frac{\epsilon}{3}}\int_{E}|u_k(t,x)|^2\,\mathrm dx\,\mathrm dt\leq \frac{\epsilon}{3}\;\;\mbox{for all}\;\; k>K.
\end{align}
Because $\epsilon>0$ can be arbitrarily small,  \eqref{1120-3} follows from \eqref{1120-6-01}, \eqref{1120-6-02} and \eqref{1120-6-04} immediately.

Hence, we have proved ``$(i)\Rightarrow (ii)$".

\noindent{\it Step 2. We  prove that   ``$(ii)\Rightarrow (i)$".}

Arbitrarily fix $T>\frac{\pi}{2}$. By contradiction, we suppose that $(i)$ is not true for the aforementioned $T$. Then
 there exists a sequence of functions
  $\{v_{0,\,k}\}_{k=1}^\infty$ in $L^2(\mathbb{R}^n)$ so that  for each $k\in \mathbb{N}^+$,
\begin{align}\label{0215-2}
\|v_{0,\,k}\|_2=1\;\;\mbox{and}\;\;
\int_0^{T}\int_{E}|e^{-\i tH}v_{0,\,k}|^2\,\mathrm dx\,\mathrm dt\leq \frac{\|v_{0,\,k}\|^2_{L^2}}{k}=\frac{1}{k}.
\end{align}

Several observations are given in order. First, from
\eqref{wang5.6}, we see that when
 $t\in\mathbb{R}^+\setminus \frac{\pi}{2}\mathbb{N}$ and $x\in\mathbb{R}^n$,
\begin{align}\label{0215-3}
&\left(e^{-\i (t+\pi/2)H}v_{0,\,k}\right)(x)\nonumber\\
&=\frac{e^{-\i \pi n/4}}{(2\pi\sin{2(t+\pi/2)})^{n/2}}\int_{\mathbb{R}}{\exp\left(\frac{\i}{2}(|x|^2+|y|^2)\cot{2(t+\frac{\pi}{2})}-\frac{\i}{\sin{2(t+\pi/2)}}x\cdot y\right)v_{0,k}(y)\,\mathrm dy}\nonumber\\
&=\frac{e^{-\i 3\pi n/4}}{(2\pi\sin{2t})^{n/2}}\int_{\mathbb{R}}{\exp\left(\frac{\i}{2}(|x|^2+|y|^2)\cot{2t}-\frac{\i}{\sin{2t}}(-x)\cdot y\right)v_{0,k}(y)\,\mathrm dy}\nonumber\\
&=e^{-\i \pi n/2}\left(e^{-\i tH}v_{0,\,k}\right)(-x).
\end{align}
Second, since $T>\frac{\pi}{2}$,  we have
\begin{eqnarray}\label{0215-4}
\int_0^{T}\int_{E}|e^{-\i tH}v_{0,\,k}|^2\,\mathrm dx\,\mathrm dt=\int_0^{\frac{\pi}{2}}\int_{E}|e^{-\i tH}v_{0,\,k}|^2\,\mathrm dx\,\mathrm dt+\int_\frac{\pi}{2}^{T}\int_{E}|e^{-\i tH}v_{0,\,k}|^2\,\mathrm dx\,\mathrm dt.
\end{eqnarray}
Third, by  the change of variable $t\rightarrow t+\frac{\pi}{2}$ and the fact \eqref{0215-3}, we find
\begin{align}\label{0215-5}
\int_\frac{\pi}{2}^{T}\int_{E}|e^{-\i tH}v_{0,\,k}(x)|^2\,\mathrm dx\,\mathrm dt&=\int_0^{T-\frac{\pi}{2}}\int_{E}|e^{-\i (t+\frac{\pi}{2})H}v_{0,\,k}(x)|^2\,\mathrm dx\,\mathrm dt\nonumber\\
&=\int_0^{T-\frac{\pi}{2}}\int_{-E}|e^{-\i tH}v_{0,\,k}(x)|^2\,\mathrm dx\,\mathrm dt,
\end{align}
where $-E:=B^c(0,\, r)\bigcap\{x\in\mathbb{R}^n:\, x_1\le 0\}$.
 Last, it is clear that
\begin{align}\label{0222-01}
E\bigcup(-E)=B^c(0,\, r).
\end{align}

Set  $T_0=min\{T-\frac{\pi}{2}, \frac{\pi}{2}\}$.
Then by the second inequality in \eqref{0215-2} and by \eqref{0215-4},  \eqref{0215-5} and  \eqref{0222-01}, we have
\begin{align}\label{0215-7}
\int_0^{T_0}\int_{B^c(0,\, r)}|e^{-\i tH}v_{0,\,k}|^2\,\mathrm dx\,\mathrm dt\leq\frac{1}{k}.
\end{align}
Now by Theorem \ref{thm-1126-2} (with $x_0=0$), the first equality in \eqref{0215-2}
 and  \eqref{0215-7}, we find
\begin{align}\label{0215-8}
1\leq C(T_0)\int_0^{T_0}\int_{B^c(0,\, r)}|e^{-\i tH}v_{0,\,k}|^2\,\mathrm dx\,\mathrm dt\leq\frac{ C(T_0)}{k}\rightarrow 0,\,\,\,\,\,\,\text{as}\,\,\,\,k\rightarrow\infty,
\end{align}
which leads to a contradiction. So $(i)$ is true.

Thus we end the proof of Theorem \ref{thm-1126-1}.\qed

%\end{proof}

\subsection{Proof of Theorem \ref{thm-1126-3}.}\label{subsec5.3} In its proof, we borrow some ideas from \cite{DM} and \cite{WWZZ}.

Suppose that $E$ holds \eqref{equ1.26} for some $T>0$ and $C>0$. Notice that the semigroup $\{e^{-tH}\}_{t\geq 0}$, generated by the Hermite operator $-H$,
can be extended over $\mathbb{C}^+:=\{z\in \mathbb{C} : \Re z>0\}$. Furthermore,
  the kernel associated with  $e^{-zH}$ (over $\mathbb{C}^+$) can be written in the form: {
\begin{align}\label{equ-1027-1.31}
K_{z}(x, y)=\sum_{\alpha\in\mathbb{N}^n}{e^{-(n+2|\alpha|)z}\Phi_{\alpha}(x)\Phi_{\alpha}(y)},
 \;\; z\in \mathbb{C}^+, x\in \mathbb{R}^n, y\in \mathbb{R}^n,
\end{align}}
where $\Phi_{\alpha}(x)$ is given by \eqref{5.6,10-27}. Thanks to the Mehler's formula (see e.g. in \cite[p. 85]{Th}), the series in  \eqref{equ-1027-1.31} can be summed explicitly. More precisely, we have that when $z\in \mathbb{C}^+$ and $x,y\in \mathbb{R}^n$,
\begin{align}\label{equ1.28}
K_{z}(x, y)=(2\pi\sinh {2z})^{-\frac n2}\exp\left(-\frac{\coth{2z}}{2}(|x|^2+|y|^2)+\frac{2}{\sinh{2z}}x\cdot y\right),
\end{align}
where $\coth$ and $\sinh$ are hyperbolic trigonometric functions. From \eqref{equ1.28}, we see that for each fixed $(x, y)\in \mathbb{R}^n\times\mathbb{R}^n$, the kernel  is an analytic function of $z$  over $\mathbb{C}^+$.

We  claim
\begin{align}\label{equ1.31}
|K_{s+\i t}(x,y)|\leq Cs^{-\frac n2}\exp\left(-\frac{s\cdot |x-y|^2}{4(s^2+t^2)}\right)\;\mbox{for all}\; x,y\in\mathbb{R}^n, s>0, t\in [0,T].
\end{align}
To prove \eqref{equ1.31}, we first recall the following  result on analytic function: (It can be found in \cite[Lemma 9]{Da}.)
 {\it Let $F$ be an analytic function on $\mathbb{C}^+$. Suppose that
there is $a_1>0$, $a_2>0$, $\beta\geq 0$ and $\alpha\in (0,1]$ so that
\begin{align*}
|F(re^{\i \theta})|\leq a_1(r\cos\theta)^{-\beta},\,\,\,\,\,\,|F(r)|\leq a_1r^{-\beta}\exp{(-a_2r^{-\alpha})}
\;\;\mbox{for all}\;\;r>0\;\mbox{and}\;|\theta|<\pi/2.
\end{align*}
 Then
\begin{align}\label{equ1-1029-29}
|F(re^{\i \theta})|\leq a_12^{\beta}(r\cos\theta)^{-\beta}\exp{(-\frac{a_2\alpha}{2}r^{-\alpha}\cos{\theta})}\;\;\mbox{for all}\;\;r>0\;\mbox{and}\;|\theta|<\pi/2.
\end{align}}

Now we are in a position  to prove \eqref{equ1.31}. On one hand, when $z=s>0$, one can use the inequalities: $e^{2s}+e^{-2s}\ge 2$ and $\sinh{2s}\ge 2s$, to find  some $C>0$ so that
\begin{align}\label{equ1.29}
K_{s}(x, y)\leq Cs^{-\frac n2}\exp\left(-\frac{|x-y|^2}{2s}\right)\;\;\mbox{for all}\; x,y\in \mathbb{R}^n\;\mbox{and}\;s>0.
\end{align}
(One can also use the fact:  $-\Delta+|x|^2\ge -\Delta$, to get \eqref{equ1.29}, see e.g. in \cite{Si}. ) On the other hand,
by \eqref{equ1.28}, it follows that there is some $C>0$ so that
\begin{align}\label{equ1.30}
|K_{s+\i t}(x, y)|\leq Cs^{-\frac n2}\;\;\mbox{for all}\;\;x,y\in \mathbb{R}^n, s>0\;\mbox{and}\;t\geq 0.
\end{align}
In view of \eqref{equ1.29} and \eqref{equ1.30}, the desired estimate \eqref{equ1.31} follows by applying \eqref{equ1-1029-29} with $a_2=|x-y|^2/2$, $\alpha=1$, $\beta=n/2$ and $\cos\theta=s/\sqrt{s^2+t^2}$.

Next, we arbitrarily fix $y_0\in \mathbb{R}^n$. Let
\begin{eqnarray*}\label{equ1.32}
u_0(x):=K_{1}(x, y_0),\;x\in \mathbb{R}^n.
\end{eqnarray*}
(It is clear that $u_0\in L^2(\mathbb{R}^n)$.)
By the group property,  the solution $v(t, x):= e^{-\i tH}u_0(x)$ ($(t, x)\in \mathbb{R^+}\times \mathbb{R}^n$) is as:
\begin{align}\label{equ1.33}
v(t, x)=\int{K_{\i t}(x, y)K_1(y, y_0)\, \mathrm dy}=K_{1+\i t}(x, y_0),\; (t, x)\in \mathbb{R^+}\times \mathbb{R}^n.
\end{align}
Since the Hermite function $\Phi_{0}(x)=\pi^{-n/4}e^{-\frac{|x|^2}{2}}$ ($x\in \mathbb{R}^n$), given by \eqref{5.6,10-27},
is the normalized eigenfunction of $H=-\Delta+|x|^2$ corresponding to the smallest eigenvalue $\lambda_0=n$, it follows  from the eigenfunction expansion of $u_0(x)$ that
\begin{align}\label{equ1.35}
\int_{\mathbb{R}^n}|u_0(x)|^2\,\mathrm dx=\int{\big|\sum_{\alpha\in\mathbb{N}^n}{e^{-(n+2|\alpha|)}\Phi_{\alpha}(x)\Phi_{\alpha}(y_0)}\big|^2\,\mathrm dx}\ge e^{-2n}|\Phi_{0}(y_0)|^2=Ce^{-|y_0|^2}.
\end{align}
Meanwhile,  by \eqref{equ1.33} and \eqref{equ1.31}, we see that for any  $L>0$,
 \begin{align}\label{equ1.36}
|v(t, x)|\leq Ce^{-\frac{\gamma |x-y_0|^2}{8}}e^{-\frac{\gamma L^2 |\rho(y_0)|^2}{8}},\;\;\mbox{when}\;\;|x-y_0|\ge L\rho(y_0)\;\mbox{and}\;t\in[0,T],
\end{align}
where $\gamma:=\frac{1}{1+T^2}$ and $\rho(y_0)$ is given by \eqref{equ1.27}.

{
Finally,  by \eqref{equ1.35},
\eqref{equ1.26} (where $u$ is replaced by the above $v$) and \eqref{equ1.36}, we see that for any $L>0$,
\begin{align}\label{equ1.37}
e^{-|y_0|^2}&\leq C\int_0^{T}\int_{E\bigcap B(y_0,\, L\rho(y_0)) }{|v(t, x)|^2\,\mathrm dx}\,\mathrm dt+Ce^{-\frac{\gamma L^2|\rho(y_0)|^2}{4}}\int_0^{T}\int_{ B^{c}(y_0,\, L\rho(y_0))}{e^{-\frac{\gamma |x-y_0|^2}{4}}\,\mathrm dx}\,\mathrm dt  \nonumber\\
&\leq C\int_0^{T}\int_{E\bigcap B(y_0,\, L\rho(y_0)) }{|v(t, x)|^2\,\mathrm dx}\,\mathrm dt+CTe^{-\frac{\gamma L^2|\rho(y_0)|^2}{4}}.
\end{align}
In the above, by choosing $L>0$ so that $CTe^{-\frac{\gamma L^2|\rho(y_0)|^2}{4}}<\frac{e^{-|y_0|^2}}{2}$, we obtain, with the help of \eqref{equ1.31}, that
}
\begin{eqnarray}\label{equ1.37}
\frac{e^{-|y_0|^2}}{2}&\leq&C\int_0^{T}\int_{E\bigcap B(y_0,\, L\cdot \rho(y_0))}{e^{-\frac{\gamma|x-y_0|^2}{2}}\,\mathrm dx}\,\mathrm dt\nonumber\\
&\leq &CT\big|E\bigcap B(y_0,\, L\rho(y_0))\big|,
\end{eqnarray}
which leads to \eqref{equ1.27} with $c=\frac{1}{2CT}$. This ends the proof of Theorem \ref{thm-1126-3}.   \qed
%\end{proof}
\begin{remark}\label{remark5.2,11-18}
  In the 1-dim case, the necessary condition \eqref{equ1.27}  is strictly weaker than the weakly thick property \eqref{def-1-3}.
 This can be seen by three facts as follows: First, if  $E\subset \R$   satisfies \eqref{def-1-3}, then $E$ must satisfy \eqref{equ1.27}. Second, the set $E$, defined by \eqref{equ3.61},
 holds \eqref{equ1.27}. Third, the set $E$, defined by \eqref{equ3.61}, does not satisfy  \eqref{def-1-3}.
 The first fact follows from Theorem \ref{thm-ob-for-HO} and Theorem \ref{thm-1126-3}. The second fact can be proved in the following way: Since
 $$[y-3\rho(y),\, y+3\rho(y)]\supseteq[0, 2\rho(y)]\,\,\,\,\text{for all}\,\,y\in\mathbb{R}, $$
 here  $\rho(y)$ is given by \eqref{equ1.27}.  It follows from \eqref{equ3.61}
 that for some $c>0$,
$$
\big|E\bigcap [y-3\rho(y),\, y+3\rho(y)]\big|\ge \rho(y)^{-\varepsilon}\ge ce^{-|y|^2}\;\;\mbox{for all}\;\;y\in\mathbb{R},
$$
 which shows that $E$ satisfies \eqref{equ1.27}. The third fact follows from \eqref{equ3.61-1801} in Remark \ref{rmk-bound-set}.
\end{remark}

\begin{appendix}
\renewcommand{\appendixname}{Appendix\,\,}

\section{WKB approximate solutions-Proof of Lemma \ref{lem-append}}\label{sec-app}
 According to {\bf Key Observation} in the proof of Theorem \ref{thm-4-suff} ,
each $\varphi_k$ is either even or odd.
We claim: in the case that $\varphi_k$ is even, \eqref{eq-app-2}, as well as
\eqref{eq-app-4}, holds, while in the case that $\varphi_k$ is odd, \eqref{eq-app-3}, as well as
\eqref{eq-app-4}, holds. We only give the proof for the case that $\varphi_k$ is even, while the proof for the second case is very similar. {\it Thus, we will assume, in what follows, that  $\varphi_k$ is even.}

Notice that  the equation \eqref{eq-app-1} has two turning points $x=\pm\mu_k$ (with $\mu_k=\lambda_k^{1/2m}$). Since
$\varphi_k$ is even, we need only focus our studies on $[0, \infty)$ and  the turning point $x=\mu_k$.
 Since $\varphi_k$ has different behaviors for the cases that $x$ is small (compared to $\mu_k$); $x$ is close to $\mu_k$; and  $x$ is large,  it has three different expressions in \eqref{eq-app-4}.

\noindent{\it Case 1. Asymptotic behavior of $\varphi_k$ when $0\leq x<\mu_k-\delta\mu_k^{-\frac{2m-1}{3}}$ (Here, $\delta>0$ is arbitrarily fixed. Notice that for large $k$, we have $\mu_k-\delta\mu_k^{-\frac{2m-1}{3}}>0$.)}

  This case is corresponding to the classical allowed region. Consider the following standard Liouville transform (see e.g. \cite[p. 119]{Ti}):
\begin{equation}\label{eq-app-6}
\begin{cases}
y=S^-(x)=\int_0^x{\sqrt{\mu_k^{2m}-s^{2m}}\,\mathrm ds},\;
x\in \hat\Omega_k:=\big\{x\;:\;|x|\leq \mu_k-\delta\mu_k^{-\frac{2m-1}{3}}\big\}\\[4pt]
w=w(y)=\big(\mu_k^{2m}-x^{2m}\big)^{\frac{1}{4}}
 \varphi_k(x),\;\mbox{with}\; y=S^{-}(x)\in S^{-}(\hat\Omega_k).
\end{cases}
\end{equation}
Applying the above transform to the equation \eqref{eq-app-1} (restricted over $\hat\Omega_k$), we find
\begin{align}\label{eq-app-7}
\frac{d^2w(y)}{dy^2}+w(y)+q_1(\mu_k, y)w(y)=0,\;y\in S^{-}(\hat\Omega_k),
\end{align}
where
\begin{align}\label{eq-app-8}
q_1(\mu_k, y)=\frac{m(2m-1)x^{2m-2}}{2(\mu_k^{2m}-x^{2m})^2}+\frac{5m^2x^{2(2m-1)}}{4(\mu_k^{2m}-x^{2m})^3},
\,\mbox{with}\,y=S^-(x)\in S^{-}(\hat\Omega_k).
\end{align}
Using Duhamel's formula to \eqref{eq-app-7}, we have
\begin{align}\label{eq-app-9}
w(y)=w(0)\cos y- \int_0^y{\sin(y-z)q_1(\mu_k, z)w(z)\,\mathrm dz},\; \;y\in S^{-}(\hat\Omega_k).
\end{align}
By \eqref{eq-app-8}, after some computations, we see that  when  $0\leq x<\mu_k-\delta\mu_k^{-\frac{2m-1}{3}}$,
\begin{align}\label{eq-app-10}
\int_0^y{q_1(\mu_k, z) \mathrm dz}&=\int_0^x{\left(\frac{m(2m-1)t^{2m-2}}{2(\mu_k^{2m}-t^{2m})^\frac32}
+\frac{5m^2t^{2(2m-1)}}{4(\mu_k^{2m}-t^{2m})^{\frac{5}{2}}}\right)\,\mathrm dt} \;\; \;(\mbox{by  changing  variable:} z=S^-(t))\nonumber\\
&\leq C\left(\int_0^{\mu_k-\delta\mu_k^{-{(2m-1)}/{3}}}\frac{t^{2m-2}}{(\mu_k^{2m}-t^{2m})^\frac32}\,\mathrm dt
+\int_0^{\mu_k-\delta\mu_k^{-{(2m-1)}/{3}}}\frac{t^{2(2m-1)}}{(\mu_k^{2m}-t^{2m})^\frac52}\,\mathrm dt\right)\nonumber\\
&\leq C\mu_k^{-\frac{2}{3}(m+1)}+C\leq C.\nonumber\\
\end{align}
Here and in what follows, $C$ stands for a positive constant (independent of $k$) which may vary
in different contexts.
 Moreover, when $0\leq x<\mu_k$, we have
\begin{align}\label{eq-app-11}
\int_0^x\frac{t^{2m-2}}{(\mu_k^{2m}-t^{2m})^\frac32}\,\mathrm dt+\int_0^x\frac{t^{2(2m-1)}}{(\mu_k^{2m}-t^{2m})^\frac52}\,\mathrm dt\leq C(\mu_k^{2m}-x^{2m})^{-\frac12}(\mu_k-x)^{-1}.
\end{align}
By \eqref{eq-app-10}, we can apply Gronwall's inequality in \eqref{eq-app-9} to see
\begin{align}\label{eq-app-12}
|w(y)|\leq C\cdot |w(0)|,\;\;\mbox{when}\; y\in S^{-}(\hat\Omega_k).
\end{align}
 Inserting \eqref{eq-app-12} into \eqref{eq-app-9}, using  \eqref{eq-app-11} and \eqref{eq-app-6}, we obtain
\begin{align}\label{eq-app-13}
\varphi_k(x)=w(0)(\mu_k^{2m}-|x|^{2m})^{-\frac14}(\cos{S^-(x)}+R_k(x)),\,\,\,|x|<\mu_k-\delta\mu_k^{-\frac{2m-1}{3}}
\end{align}
where the error term $R_k(x)$  satisfies
\begin{align}\label{eq-app-14}
|R_k(x)|\leq C(\mu_k^{2m}-x^{2m})^{-\frac12}(\mu_k-x)^{-1},\,\,\,\,\,0\leq x<\mu_k.
\end{align}

Comparing \eqref{eq-app-13} (as well as \eqref{eq-app-14}) with  \eqref{eq-app-2} (as well as
\eqref{eq-app-4}), we see that the remainder in {\it Case 1} is to show that
\begin{eqnarray}\label{A10,11-10}
 w(0)\sim \mu_k^{\frac{m-1}{2}}.
 \end{eqnarray}

 First, we notice that Lemma \ref{lem3.1} (where $\Omega_k$ is replaced by $\hat\Omega_k$) can be applied,
 except for a slight modification on \eqref{equ3.5}. Thus,
 Lemma \ref{lem-930-C} holds for  $C_{\lambda_k}:=w(0)-iw'(0)$
  where
 $w$ is given by \eqref{eq-app-6} (see \eqref{3.22,11.4}). Since $w$ is even, we have $C_{\lambda_k}:=w(0)$. This,
 along with \eqref{equ3.15}, yields
\begin{eqnarray*}
|w(0)|\lesssim\lambda_k^{\frac{m-1}{4m}}=\mu_k^{\frac{m-1}{2}}.
\end{eqnarray*}

Next,  we will use the argument in \cite[Lemma 3.2]{Ya} to prove that
$$
|w(0)|\gtrsim \mu_k^{\frac{m-1}{2}}.
$$
 Indeed, from the following viral identity \cite[p. 66]{CFKS}:
$$
\mu_k^{2m}=\left\langle(-\partial_x^2+x^{2m})\varphi_k,\, \varphi_k\right\rangle_{L^2(\mathbb{R})}
=(m+1)\langle x^{2m}\varphi_k,\, \varphi_k\rangle_{L^2(\mathbb{R})},
$$
we have
\begin{eqnarray*}
\mu_k^{2m}&=(m+1)\displaystyle\int_{\mathbb{R}}{x^{2m}\cdot|\varphi_k(x)|^2\,\mathrm dx}\geq \frac{\mu_k^{2m}(m+1)}{\sqrt{2}} \displaystyle\int_{|x|\ge {\mu_k}/{(2^{\frac{1}{4m}}})}{|\varphi_k(x)|^2\,\mathrm dx}.\nonumber\\
\end{eqnarray*}
 From this  and the fact $\int_0^{\infty}|\varphi_k(x)|^2\d x=1/2$,  we have
\begin{align}\label{eq-app-15}
\int_0^{{\mu_k}/{(2^{\frac{1}{4m}}})}{|\varphi_k(x)|^2\,\mathrm dx}=\frac12-\int_{x\ge {\mu_k}/{(2^{\frac{1}{4m}}})}{|\varphi_k(x)|^2\,\mathrm dx}\ge \frac{m+1-\sqrt{2}}{2(m+1)}>0.
\end{align}
On the other hand,  for ${0\leq x\le {\mu_k}/{(2^{\frac{1}{4m}}})}$, it follows from \eqref{eq-app-13} and \eqref{eq-app-14} that
\begin{align}\label{eq-app-16}
\int_0^{{\mu_k}/{(2^{\frac{1}{4m}}})}{|\varphi_k(x)|^2\,\mathrm dx}&=\int_0^{ {\mu_k}/{(2^{\frac{1}{4m}}})}{\frac{|w(0)(\cos{S^-(x)}+R_k(x))|^2}{\sqrt{\mu_k^{2m}-x^{2m}}}\,\mathrm dx}\nonumber\\
&\leq C|w(0)|^2\cdot \mu_k^{1-m},
\end{align}
which, together with \eqref{eq-app-15}, yields that $|w(0)|\gtrsim \mu_k^{\frac{m-1}{2}}$.
Hence, we end the proof of \eqref{A10,11-10}.

\noindent {\it Case 2. Asymptotic behavior of $\varphi_k$ when $\mu_k-\delta \mu_k^{-\frac{2m-1}{3}}\leq x\leq \mu_k+\delta \mu_k^{-\frac{2m-1}{3}}$ (Here, $\delta>0$ will be
given later.)}

Notice that near the turning point $\mu_k$, the approximation in {\it Case 1} breaks down since the factor $|x^{2m}-\mu_k^{2m}|^{-1/4}$ in \eqref{eq-app-13} goes to infinity when $|x-\mu_k|\rightarrow 0$. The way to pass this barrier is to linearize the potential $x^{2m}$ near the turning point. Indeed, plugging the Taylor's expansion:
$$
x^{2m}=\mu_k^{2m}+2m\mu_k^{2m-1}(x-\mu_k)+\mu_k^{2m-2}(x-\mu_k)^2\cdot T(x-\mu_k),
$$
(where $T(t)=c_0+c_1t+\cdots+c_{2m-2}t^{2m-2}$ is some polynomial of degree $2m-2$) into   \eqref{eq-app-1} yields
\begin{align}\label{equ-app-1003-1}
-\varphi''_k(x)+2m\mu_k^{2m-1}(x-\mu_k)\varphi_k(x)+\mu_k^{2m-2}(x-\mu_k)^2 T(x-\mu_k)\cdot\varphi_k(x)=0.
\end{align}
Using the transform:
\begin{align}\label{eq-app-26}
y=(2m\mu_k^{2m-1})^{\frac13}(x-\mu_k),
\end{align}
and choosing $\delta$ so that
 $$
 0<\delta<\frac{(2m)^{-1/3}}{10},
   $$
   we change the equation \eqref{equ-app-1003-1} into
\begin{align}\label{eq-app-1027-26}
\varphi''_k(y)-y\cdot \varphi_k(y)-q_2(\mu_k, y)\varphi_k(y)=0,\,\,\,\,-1/10< y< 1/10,
\end{align}
where
\begin{align}\label{eq-app-1027-27}
q_2(\mu_k, y)=\mu_k^{-\frac{2(m+1)}{3}}\cdot y^2\cdot T\big((2m\mu_k^{2m-1})^{-\frac13}y\big).
\end{align}

 The idea to deal with \eqref{eq-app-1027-26} is as: the term $q_2(\mu_k, y)$ is small as $k$ is large; when it is ignored, the above equation becomes the standard Airy equation. Thus, it can be solved  explicitly in terms of $Ai(\cdot)$ and $Bi(\cdot)$ (which are two linear independent solutions of the Airy equation: $\psi''(y)-y\psi(y)=0$, $y\in \mathbb{R}$) by the method of variation of parameters (see \cite{Ol}).

With the above idea, we  write the solution of \eqref{eq-app-1027-26} as:
\begin{eqnarray}\label{A17,11-11}
\varphi_k(y)=c_{\mu_k}\cdot\big(Ai(y)+r(y)\big),\,\,\,\,-1/10< y< 1/10.
\end{eqnarray}
Since $Ai''(y)-y\cdot Ai(y)=0$,  the error term $r(\cdot)$ satisfies
\begin{align}\label{eq-app-1027-28}
r''(y)-y\cdot r(y)-q_2(\mu_k, y)(Ai(y)+r(y))=0,\,\,\,\,-1/10< y< 1/10.
\end{align}
By the variation of parameters and by using of the identity:
$$
W(Ai(y), Bi(y)):=Ai(y)Bi'(y)-Bi(y)Ai'(y)=1/\pi,\; y\in \mathbb{R},
$$
we can get from \eqref{eq-app-1027-28} that (see e.g. \cite[p.400]{Ol})
\begin{align}\label{eq-app-1027-29}
r(y)=\pi\int_{-1/10}^y{\big(Bi(y)Ai(v)-Ai(y)Bi(v)\big)q_2(\mu_k, v)\big(Ai(v)+r(v)\big)\,\mathrm dv},
\end{align}
when $-1/10< y< 1/10$.
Since the functions $Ai(\cdot)$ and $Bi(\cdot)$ don't vanish over $[-1/10,\, 1/10]$ (\cite[p.395]{Ol}), we can find some absolute constant $C>0$  so that
\begin{align}\label{eq-app-1028-30}
\frac{1}{C}\leq |Ai(y)|,\,|Bi(y)|\leq C\;\;\mbox{for all}\;\;-1/10< y< 1/10.
\end{align}
This, along with  \eqref{eq-app-1027-27}, yields
\begin{align}\label{eq-app-1028-31}
\int_{-1/10}^y{\left|\big(Bi(y)Ai(v)-Ai(y)Bi(v)\big)q_2(\mu_k, v)\right|\,dv}\leq C\mu_k^{-\frac{2(m+1)}{3}},\,\,\,\,-1/10< y< 1/10.
\end{align}
By \eqref{eq-app-1028-31},  we can apply Gronwall's inequality to \eqref{eq-app-1027-29} to see
\begin{align}\label{eq-app-1028-32}
|r(y)|\leq C\mu_k^{-\frac{2(m+1)}{3}},\,\,\,\,-1/10< y< 1/10.
\end{align}

Now, it follows  from \eqref{A17,11-11} and  \eqref{eq-app-26} that
\begin{align}\label{eq-app-27}
\varphi_k(x)=c_{\mu_k}\cdot
\left(Ai\big((2m\mu_k^{2m-1})^{\frac13}(x-\mu_k)\big)
+r\big((2m\mu_k^{2m-1})^{\frac13}(x-\mu_k)\big)\right),
\end{align}
where $r$ satisfies the estimate \eqref{eq-app-1028-32}.
The remainder is to estimate $c_{\mu_k}$ in \eqref{eq-app-27}. By \eqref{eq-app-1028-30}, one has $Ai(x)\sim 1$, when $|x|\leq 1/10$. Therefore we have
\begin{align}\label{eq-app-28}
\varphi_k(x)\sim c_{\mu_k},\;\;\text{when}\;\;\mu_k-\delta\mu_k^{-\frac{2m-1}{3}}\leq x\leq \mu_k+\delta\mu_k^{-\frac{2m-1}{3}}.
\end{align}
Meanwhile,  if $w$ is given by \eqref{eq-app-6}, then by results in {\it Case 1} and by the fact $|w(0)|\sim\mu_k^{\frac{m-1}{2}}$,
 we see that  $x\nearrow \mu_k-\delta\mu_k^{-\frac{2m-1}{3}}$,
$$
\varphi_k(x)\rightarrow w(0)\left(\mu_k^{2m}-(\mu_k-\delta\mu_k^{-\frac{2m-1}{3}})^{2m}\right)^{-1/4}\sim\mu_k^{\frac{m-2}{6}}.
$$
This, together with the continuity of the function $\varphi_k$, yields that
\begin{align}\label{eq-app-29}
c_{\mu_k}\sim \mu_k^{\frac{m-2}{6}}.
\end{align}
From \eqref{eq-app-28} and \eqref{eq-app-29}, we get \eqref{eq-app-2} for {\it Case 2}.

{\it Case 3. Asymptotic behavior of $\varphi_k$ when $x>\mu_k+\delta\mu_k^{-\frac{2m-1}{3}}$.}

This case is corresponding to the classically forbidden region since the potential energy $V=x^{2m}$ is greater than total energy $\lambda_k=\mu_k^{2m}$. So we use the following transform (instead of \eqref{eq-app-6}):
\begin{equation}\label{eq-app-18}
\begin{cases}
y=S^+(x)=\int_{\mu_k}^x{\sqrt{s^{2m}-\mu_k^{2m}}\,\mathrm ds},\\[4pt]
w=w(y)=(x^{2m}-\mu_k^{2m})^{\frac14}\varphi_k(x),\;\mbox{with}\; y=S^+(x).
\end{cases}
\end{equation}
Under this transform, the equation \eqref{eq-app-1} is as:
\begin{align}\label{eq-app-19}
\frac{d^2w(y)}{dy^2}-w(y)+q_3(\mu_k, y)w(y)=0,
\end{align}
where
\begin{align}\label{eq-app-20}
q_3(\mu_k, y)=\frac{m(2m-1)x^{2m-2}}{2(x^{2m}-\mu_k^{2m})^2}+\frac{5m^2x^{2(2m-1)}}{4(x^{2m}-\mu_k^{2m})^3},\,\,\,\,y=S^+(x).
\end{align}
Then by \eqref{eq-app-20} and a direct computation, one has
$$
\int_{S^+(\mu_k+\delta\mu_k^{-(2m-1)/3})}^{\infty}{q_3(\mu_k, y)\,\mathrm dy}\leq C\int_{\mu_k+\delta\mu_k^{-\frac{2m-1}{3}}}^{\infty}\left(\frac{x^{2m-2}}{(x^{2m}-\mu_k^{2m})^{\frac32}}+\frac{x^{2(2m-1)}}{(x^{2m}-\mu_k^{2m})^{\frac52}}\right){\,\mathrm dx}\leq C.
$$
This shows that the unbounded potential $x^{2m}$ is reduced to an integrable one after the transform \eqref{eq-app-18}. Then, according to some fundamental results in ODE (see e.g. \cite[Theorem 4.1]{BS}), the solution satisfies
\begin{align}\label{eq-app-22}
w(y)=C_{\mu_k}e^{-y}(1+o(1)), \,\,\,\text{as}\,\,\, y\rightarrow+\infty,
\end{align}
and it is uniquely defined by
\begin{align}\label{eq-app-21}
w(y)=C_{\mu_k}e^{-y} - \int_y^{\infty}{\sinh(t-y)q_3(\mu_k, t)w(t)\,\mathrm dt}.
\end{align}
To proceed, observe that by  \eqref{eq-app-22}, one has
$$
|\sinh{(t-y)}\cdot w(t)|\leq C_{\mu_k}\big(e^{t-y}-e^{-(t-y)}\big)e^{-t}\leq C_{\mu_k}e^{-y},\;\;\mbox{when}\;\;t>y.
$$
Plugging this into the integral in \eqref{eq-app-21} and making change of variable $t=S^+(s)$ (notice that $y=S^+(x)$),  one has
\begin{align}\label{eq-app-23}
\left|\int_y^{\infty}{\sinh(t-y)q_3(\mu_k, t)w(t)\,\mathrm dt}\right|&\leq  C_{\mu_k}e^{-y}\cdot\int_y^{\infty}{q_3(\mu_k, t)\,\mathrm dt}\nonumber\\
&\leq   C_{\mu_k}e^{-y}\cdot\int_x^{\infty}{\left(\frac{s^{2m-2}}{(s^{2m}-\mu_k^{2m})^{\frac32}}+\frac{s^{2(2m-1)}}{(s^{2m}-\mu_k^{2m})^{\frac52}}\right)\,\mathrm ds}\nonumber\\
&\leq C_{\mu_k}e^{-y}\cdot(x^{2m}-\mu_k^{2m})^{-\frac12}(x-\mu_k)^{-1},\,\,\,\,x>\mu_k.
\end{align}
Therefore we obtain from \eqref{eq-app-21},  \eqref{eq-app-23} and  \eqref{eq-app-18} that
\begin{align}\label{eq-app-24-1117}
\varphi_k(x)=C_{\mu_k}(x^{2m}-\mu_k^{2m})^{-\frac14}e^{-S^+(x)}\big(1+R_k(x)\big),
\;\;\mbox{when}\;\;x>\mu_k+\delta\mu_k^{-\frac{2m-1}{3}},
\end{align}
and  the reminder term $R_k(x)$  satisfies
\begin{align}\label{eq-app-25}
|R_k(x)|\leq C(x^{2m}-\mu_k^{2m})^{-\frac12}(x-\mu_k)^{-1},
\;\;\mbox{when}\;\;x>\mu_k+\delta\mu_k^{-\frac{2m-1}{3}}.
\end{align}
%We shall use continuity of the $\varphi_k$ to determine the size of $C_{\mu_k}$, and this was done after we establish the following case, where $x$ is very close to the turning point $x=\mu_k$.

We next claim that
\begin{eqnarray}\label{A34,11-12}
C_{\mu_k}\sim\mu_k^{\frac{m-1}{2}}
\end{eqnarray}
Indeed, according to \eqref{eq-app-18} and \eqref{eq-app-25},
there is an absolute constant $C>0$ such that
$$
0<S^+(\mu_k+\delta\mu_k^{-\frac{2m-1}{3}})\leq \mu_k^{-\frac{2m-1}{3}}\cdot \sqrt{(\mu_k+\delta\mu_k^{-\frac{2m-1}{3}})^{2m}-\mu_k^{2m}}\leq C,
$$
and
$$
R_k(\mu_k+\delta\mu_k^{-\frac{2m-1}{3}})\leq C.
$$
By these, we can use \eqref{eq-app-24-1117} and the continuity of  $\varphi_k$  near the point $\mu_k+\delta\mu_k^{-\frac{2m-1}{3}}$ to find that
 when $x\searrow\mu_k+\delta\mu_k^{-\frac{2m-1}{3}}$,
$$
\varphi_k(x)\rightarrow C_{\mu_k}\left((\mu_k+\delta\mu_k^{-\frac{2m-1}{3}})^{2m}
-\mu_k^{2m}\right)^{-1/4}\sim\mu_k^{\frac{m-2}{6}},
$$
which leads to \eqref{A34,11-12}.

Finally, by \eqref{eq-app-24-1117}, \eqref{eq-app-25} and \eqref{A34,11-12}, we get
\eqref{eq-app-2} and
\eqref{eq-app-4} for {\it Case 3}. This ends  the proof of Lemma \ref{lem-append}. \qed

\begin{remark}
The proof of Lemma \ref{lem3.1}, provided in \cite{Ya},
 is in the same spirit of  the above {\it Case 1}.
This is because the region $\Omega_{\mu_k}=\{x\in \mathbb{R},\,\,\,V(x)\le \frac{\mu_k}{2}\}$ in the consideration belongs to the classically allowed region in {\it Case 1}.
Since the potential in  Lemma \ref{lem3.1} satisfies the \textbf{Condition (H)},  \cite{Ya} uses transform \eqref{equ3.4} instead of \eqref{eq-app-6}.
%the reminder term is actually easier to be handled, due to the fact that the  is far away from the turning point $\{x\in \mathbb{R},\,\,\,V(x)=\mu_k\}$.
Moreover, the  \textbf{Condition (H)} is sufficient to obtain the sharp bounds of the constant $w(0)$.
\end{remark}

\end{appendix}

\section*{Acknowledgements}
The authors would like to thank  Dr. Yubiao Zhang for very useful discussions and for constructive comments on the time optimal observability of Hermite Schr\"{o}dinger equations.

S. Huang was supported by the National Natural Science Foundation of China under grants 11801188
and 11971188.

G. Wang was partially supported by the National Natural Science Foundation of China under grants 11971022 and 11926337.

M. Wang was supported by the National Natural Science Foundation of China under grant No. 11701535

\bibliographystyle{IEEEtran}
%\bibliography{IEEEabrv,REFS}

% Generated by IEEEtran.bst, version: 1.13 (2008/09/30)
\begin{thebibliography}{}
\providecommand{\url}[1]{#1}
\csname url@samestyle\endcsname
\providecommand{\newblock}{\relax}
\providecommand{\bibinfo}[2]{#2}
\providecommand{\BIBentrySTDinterwordspacing}{\spaceskip=0pt\relax}
\providecommand{\BIBentryALTinterwordstretchfactor}{4}
\providecommand{\BIBentryALTinterwordspacing}{\spaceskip=\fontdimen2\font plus
\BIBentryALTinterwordstretchfactor\fontdimen3\font minus
  \fontdimen4\font\relax}
\providecommand{\BIBforeignlanguage}[2]{{%
\expandafter\ifx\csname l@#1\endcsname\relax
\typeout{** WARNING: IEEEtran.bst: No hyphenation pattern has been}%
\typeout{** loaded for the language `#1'. Using the pattern for}%
\typeout{** the default language instead.}%
\else
\language=\csname l@#1\endcsname
\fi
#2}}
\providecommand{\BIBdecl}{\relax}
\BIBdecl

\end{thebibliography}


\begin{thebibliography}{plain}


\bibitem{AM} Anantharaman, N., Maci\`{a}, F.: Semiclassical measures for the Schr\"{o}dinger equation on the torus.
J. Eur. Math. Soc. \textbf{16}, 1253--1288 (2014)

\bibitem{AR} Anantharaman, N., Rivi\`{e}re, G.:  Dispersion and controllability for the Schr\"{o}dinger equation on negatively
curved manifolds. Anal. PDE. \textbf{5}, 313--338 (2012)

\bibitem{BJ} Beauchard, K., Jaming, P.,  Pravda-Starov,  K.:  Spectral inequality for finite combinations of Hermite functions and null-controllability of hypoelliptic quadratic equations.   arXiv preprint  arXiv:1804.04895, (2018)

\bibitem{BS} Berezin,  F. A., Shubin,  M. A.: The Schr\"{o}dinger equation, Mathematics and its Applications.  Kluwer Academic Publishers 66 (1991)


\bibitem{BD}  Bonami,  A., Demange, B.: A survey on uncertainty principles related to quadratic forms. Collect.
Math.,  1--36 (2006)

\bibitem{B86} Bourgain,  J.:  A szemer\'{e}di type theorem for sets of positive density in $\R^k$. Israel J. Math. \textbf{54} , 307--316 (1986)

\bibitem{Bour13} Bourgain, J.,   Burq,  N.,  Zworski, M.: Control for Schr\"{o}dinger operators on 2-tori: rough potentials. J. Eur. Math. Soc.\textbf{ 15}, 1597--1628 (2013)

\bibitem{BS1} Burq,  N., Sun,  C.:   Time optimal observability for Grushin Schr\"{o}dinger equation. arXiv preprint  arXiv:1910.03691, (2019)


\bibitem{BZ} Burq,  N., Zworski,  M.:  Geometric control in the presence of a black box.  J. Amer. Math. Soc. \textbf{17}, 443--471 (2004)

\bibitem{BZ12}  Burq,  N., Zworski,  M.: Control for Schr\"{o}dinger equations on tori. Math. Res. Lett.
\textbf{19}, 309--324 (2012)


\bibitem{BZ17} Burq, N.,   Zworski, M.:  Rough controls for Schr\"{o}dinger operators on 2-tori.  Annales H. Lebesgue   \textbf{2}, 331--347 (2019)


\bibitem{CFKS}  Cycon,  H. L., Froese,  R. G., Kirsch,  W.,  Simon,  B.:  Schr\"{o}dinger operators with applications to quantum
mechanics and global geometry. Texts and Monographs in Physics, Springer-Verlag (1987)


\bibitem{Da} Davies,  E. B.:  Uniformly elliptic operators with measuable coefficients.   J. Funct. Anal.  \textbf{132}, 141--169 (1995)

\bibitem{DM} Duyckaerts, T., Miller,  L.:  Resolvent conditions for the control of parabolic equations. J. Funct. Anal. \textbf{263} , 3641--3673 (2012)

\bibitem{Dya} Dyatlov,  S., Jin,  L., Nonnenmacher,  S.: Control of eigenfunctions on surfaces of variable curvature. arXiv preprint  arXiv:1906.08923, (2019)


\bibitem{EV18} Egidi,  M., Veseli\'{c}, I.: Sharp geometric condition for null-controllability of the heat equation on $\mathbb R^d$ and consistent estimates on the control cost. Arch. Math. \textbf{111}, 85--99 (2018)

\bibitem{EGS}  Eremenko,  A., Gabrielov,  A., Shapiro,  B.:   High energy eigenfunctions of one-dimensional Schr\"{o}dinger operators with polynomial potentials. Comput. Methods Funct. Theory \textbf{8},  513--529 (2008)

\bibitem{Fe}  Fedoryuk,  M. V.:    Asymptotic analysis. Springer-Verlag, Berlin (1993)

\bibitem{FH}  Feynman,  R., Hibbs,  A.:   Quantum Mechanics and Path Integrals. McGraw-Hill Publishing Company, Maiden-head, Berkshire (1965)

\bibitem{Gre} Green,  W.:   On the energy decay rate of the fractional wave equation on $\mathbb {R} $ with relatively dense damping. arXiv preprint arXiv:1904.10946, (2019)

\bibitem{gri} Griffiths,  D.:   Introduction to Quantum Mechanics. Pearson Education, London (2005)

\bibitem{HJ}  Havin,  V., J\"{o}ricke,  B.:   The Uncertainty Principle in Harmonic Analysis. Springer Science and Business Media, (2012)

\bibitem{Ha}  Haraux,  A.: S\'{e}ries lacunaires et contr\^{o}le semi-interne des vibrations d'une plaque rectangulaire.  J.
Math. Pures Appl. \textbf{68}, 457--465 (1989)

\bibitem{HS} Huang,  S., Soffer,  A.:   Uncertainty principle, minimal escape velocities and observability inequalities for schr\"{o}dinger equations.  Amer. J. Math.  to appear, arXiv preprint  arXiv:1709.09485v2, (2017)

\bibitem{J90}  Jaffard,  S.: Contr\^{o}le interne exact des vibrations d'une plaque rectangulaire. Portugal. Math. \textbf{47}
, 423--429 (1990)

\bibitem{Jam} Jaming,  P.:   Nazarov's uncertainty principles in higher dimension. J. Approx. Theory \textbf{149}, 30--41 (2007)

\bibitem{Jin} Jin L.:   Control for Schr\"{o}dinger equation on hyperbolic surfaces. Math. Res. Lett. \textbf{25}, 1865--1877 (2018)

\bibitem{KRY}  Kapitanski,  L., Rodnianski,  I.,  Yajima,  K.:  On the fundamental solution of a perturbed
harmonic oscillator. Topol. Methods Nonlinear Anal. \textbf{9}, 77--106 (1997)

\bibitem{KT} Koch,  H., Tataru,  D.:   $L^p$ eigenfunction bounds for the Hermite operator. Duke Math. J. \textbf{128}, 369--392 (2005)

\bibitem{Kom} Komornik,  V.:   On the exact internal controllability of a Petrowsky system.  J. Math. Pures  Appl. \textbf{71}, 331--342 (1992)

\bibitem{Lau} Laurent,  C.:   Internal control of the Schr\"{o}dinger equation. Math. Control Related Fields \textbf{4},  161--186 (2014)

\bibitem{Le92} Lebeau,  G.:   Contr\^{o}le de l'\'{e}quation de Schr\"{o}dinger.  J. Math. Pures  Appl.  \textbf{71}, 267--291 (1992)

\bibitem{Ma11}  Maci\`{a},   F.:   The Schr\"{o}dinger flow on a compact manifold: High-frequency dynamics and dispersion.
Modern Aspects of the Theory of Partial Differential Equations, Oper. Theory Adv. Appl. \textbf{216}, Springer, Basel  (2011)

\bibitem{Mi}  Miller,  L.:   Controllability cost of conservative systems: resolvent condition and transmutation. J. Funct. Anal. \textbf{218}, 425--444 (2005)

\bibitem{M09} Miller,  L.:   Unique continuation estimates for sums of semiclassical eigenfunctions and null-controllability from cones. Preprint, (2009)


\bibitem{Ol} Olver,  F. W. J.:   Asymptotics and special functions.   Academic Press, New York (1997).


\bibitem{KDPhung} Phung,  K. D.:    Observability and control of Schr\"{o}dinger equations. SIAM J. Control Optim. \textbf{40}, 211--230 (2001)

\bibitem{RTTT} Ramdani,  K., Takahashi,  T., Tenenbaum,  G., Tucsnak,  M.:   A spectral approach for the exact observability of infinitedimensional systems with skew-adjoint generator.  J. Funct. Anal. \textbf{226}, 193--229 (2005)

\bibitem{zhang09} Rosier,  L., Zhang,  B. Y.: Exact boundary controllability of the nonlinear Schr\"{o}dinger equation. J. Differential Equations \textbf{246}, 4129--4153 (2009)

\bibitem{S}  Sibuya,  Y.:   Global theory of a second order linear ordinary differential equation with a polynomial coefficient.
North Holland Math. Studies No. 18, North Holland, Amsterdam (1975)

\bibitem{Si} Simon,  B.:   Schr\"{o}dinger semigroups. Bull. Amer. Math. Soc. (N.S.) \textbf{7}, 447--526 (1982)

%\bibitem{M}  Miller L. Geometric conditions and sharp finite time for exact internal controllability in quantum wells, manuscript, 2010.

\bibitem{ST} Sj\"{o}gren,  P., Torrea,  J. L.:   On the boundary convergence of solutions to the Hermite-Schr\"{o}dinger
equation. Colloq. Math. \textbf{118}, 161--174 (2010)


\bibitem{Th} Thangavelu,  S.:   Lecture on Hermite and Laguerre expansions (with a preface by Robert S. Strichartz). Princeton Univ. Press, Princeton, NJ  (1993)


\bibitem{Ti} Titchmarsh,  B. C.:   Eigenfunction expansions associated with second order differential operators
Part \uppercase\expandafter{\romannumeral1}. Second edition,   Oxford Univ. Press (1969)

\bibitem{TW} Tucsnak,  M.,  Weiss,  G.:   Observation and control for operator semigroups. Birkh\"{a}user Advanced Texts: Basel Textbooks, Birkh\"{a}user Verlag, Basel  (2009)

\bibitem{WWZZ} Wang,  G., Wang,  M., Zhang,  C., Zhang,  Y.:  Observable set, observability, interpolation inequality and spectral inequality for the heat equation in $\mathbb {R}^ n$. J. Math. Pures  Appl. \textbf{126}, 144--194 (2019)

\bibitem{WWZ} Wang,  G., Wang,  M., Zhang,  Y.:  Observability and unique continuation inequalities for the Schr\"{o}dinger equation.  J. Eur. Math. Soc. \textbf{21},  3513--3572 (2019)



\bibitem{Ya} Yajima, K.: Smoothness and non-smoothness of the fundamental solution of time dependent Schr\"{o}dinger equations. Comm. Math. Phys. \textbf{181}, 605--629 (1996)



\end{thebibliography}

\end{document}